\theoremstyle{plain}
\newtheorem{main}{Theorem}
\newtheorem{theorem}{Theorem}[section]
\newtheorem{lemma}[theorem]{Lemma}
\newtheorem{proposition}[theorem]{Proposition}
\newtheorem{corollary}[theorem]{Corollary}
\theoremstyle{remark}
\newtheorem{remark}[theorem]{Remark}
\newtheorem{definition}[theorem]{Definition}
\newtheorem{example}[theorem]{Example}
\newcommand{\Leb}{\operatorname{vol}}
\newcommand{\C}{\operatorname{C}}
\newcommand{\ID}{\operatorname{ID}}
\newcommand{\Sing}{\operatorname{Sing}}
\newcommand{\bT}{\mathbb{T}}
\newcommand{\Per}{\operatorname{Per}}
           \def\ea{\end{array}}
          \def\ec{\end{center}}
     \def\ed{\end{description}}
        \def\ee{\end{equation}}
       \def\eea{\end{eqnarray}}
     \def\eeaa{\end{eqnarray*}}
 \def\et{\end{thebibliography}}
\def\bib{\bibitem}
\def\Orb{{\rm Orb}}
\def\Sing{{\rm Sing}}
\def\supp{\operatorname{supp}}
\def\cD{{\mathcal D}}
\def\cU{{\mathcal U}}
\def\cV{{\mathcal V}}
\def\cM{{\mathcal M}}
\def\cN{{\mathcal N}}
\def\cP{{\mathcal P}}
\def\cT{{\mathcal T}}
\def\cS{{\mathcal S}}
\def\vep{\varepsilon}
\def\ln{\operatorname{ln}}
\title{Cherry flow: physical measures and perturbation theory}
\author{Jiagang Yang}
\date{\today}
\thanks{J.Y. is partially supported by CNPq, FAPERJ, and PRONEX.}
\address{Departamento de Geometria, Instituto de Matem\'atica e Estat\'istica, Universidade
Federal Fluminense, Niter\'oi, Brazil}
\email{yangjg\@@impa.br}
\begin{document}

\begin{abstract} 
In this article we consider Cherry flows on torus which have two singularities: a source and a saddle, 
and no periodic orbits. We show that every Cherry flow admits a unique physical measure, whose basin 
has full volume. This proves a conjecture given by R. Saghin and E. Vargas in~\cite{SV}.
We also show that the perturbation of Cherry flow depends on the divergence
at the saddle: when the divergence is negative, this flow admits a neighborhood, such that any flow 
in this neighborhood belongs to the following three cases: (a) has a saddle connection; (b) a Cherry flow; 
(c) a Morse-Smale flow whose non-wandering set consists two singularities and one periodic sink. In contrary, when the 
divergence is non-negative, this flow can be approximated by non-hyperbolic flow with arbitrarily larger number of 
periodic sinks.

\end{abstract}

\maketitle

\setcounter{tocdepth}{1} \tableofcontents

%\newpage

\section{Introduction}

In this article we consider Cherry flows on torus which have two singularities: a source and a saddle, 
and no periodic orbits and saddle connections. Such an example was given first by Cherry in~\cite{C} (see also~\cite{PM}), and the classification 
of Cherry flows was treated in~\cite{MSMM}. In this article we are interested in the physical measures and 
perturbation theory for Cherry flows. 

An invariant probability $\mu$ of a flow $\phi$ is a {\it physical measure} if its {\it basin}
$$
B(\mu) = \{x\in \mathbb{T}^2: \lim_{t\to +\infty}\frac{1}{t}\int_0^t f(\phi_s(x))ds =\int f d\mu \\
\;\; \text{for every continuous function}\;\; f\}
$$
has positive volume.

We prove the following conjecture given by Saghin and Vargas (~\cite{SV}):

\begin{main}\label{main.A}

Every $\C^\infty$ Cherry flow admits a unique physical measure, whose basin has full volume.

\end{main}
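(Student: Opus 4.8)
The plan is to pass to one–dimensional dynamics through a global cross section, and then to analyse the Birkhoff averages of the induced circle map together with its (logarithmically singular) return‑time function.

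\emph{Reduction to the return map.} It is classical that a Cherry flow admits a global cross section $\Sigma\cong S^1$, transverse to $\phi$ and meeting every orbit except the two singularities and the stable separatrices of the saddle $\sigma$. Let $f:\Sigma\setminus\{p\}\to\Sigma$ be the first–return map and $\tau:\Sigma\setminus\{p\}\to(0,\infty)$ the return time, where $p\in\Sigma$ is the unique point on the returning stable separatrix of $\sigma$. Flowing past the saddle, $f$ extends to an orientation–preserving circle homeomorphism $\bar f$ (set $\bar f(p):=\lim_{x\to p}f(x)=q$, where $q\in\Sigma$ lies on the returning unstable separatrix), which is $C^\infty$ off $p$ with $\bar f'(p)$ equal to $0$, to a positive finite number, or to $+\infty$ according as the divergence $\lambda_s+\lambda_u$ at $\sigma$ is negative, zero, or positive; moreover $\tau(x)=-\lambda_u^{-1}\log|x-p|+O(1)$ as $x\to p$. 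Since $\phi$ has no periodic orbit, $\bar f$ has irrational rotation number, hence is uniquely ergodic with a unique invariant probability $\nu$ on $S^1$, and \emph{every} $\bar f$–orbit equidistributes to $\nu$. Finally $\Leb$ on $\bT^2$, restricted to the flow box over $\Sigma$ and projected to $\Sigma$, is equivalent to Lebesgue measure $m$ on $S^1$; $\Leb$–a.e.\ point of $\bT^2$ returns to $\Sigma$ infinitely often (the exceptions lie in $W^s(\sigma)$ and in the local basin of the source, a null set), and the countable set $\bigcup_{k\ge0}\bar f^{-k}(p)$ is $m$–null, so for $\Leb$–a.e.\ $z$ the successive returns of $\phi_t z$ to $\Sigma$ form the $\bar f$–orbit of its first return $x$. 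It therefore suffices to compute $\lim_{T\to\infty}\frac1T\int_0^T g(\phi_t x)\,dt$ for $m$–a.e.\ $x\in\Sigma$ and every continuous $g:\bT^2\to\RR$.

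\emph{Ergodic averages as Birkhoff ratios.} For continuous $g$ put $G(x)=\int_0^{\tau(x)}g(\phi_t x)\,dt$, so $|G|\le\|g\|_\infty\tau$. Writing $x_k=\bar f^k(x)$, $S_n\psi=\sum_{k=0}^{n-1}\psi\circ\bar f^{\,k}$ and letting $n=n(T,x)$ be the number of returns before time $T$,
\begin{equation}
\frac1T\int_0^T g(\phi_t x)\,dt=\frac{S_nG(x)}{S_n\tau(x)}+O\!\left(\frac{\tau(x_n)}{S_n\tau(x)}\right).
\end{equation}
Hence everything reduces to the asymptotics of $S_nG/S_n\tau$ along $m$–a.e.\ orbit, and this is dictated by whether $\tau\in L^1(\nu)$.

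\emph{The dichotomy from the divergence.} The decisive estimate controls the measure $\nu$ near $p$: letting $J_n\ni p$ be the interval cut off by the closest returns $\bar f^{\pm q_n}(p)$ ($q_n$ the denominators of the rotation number), one has $\nu(J_n)\asymp\|q_n\rho\|$ while $\tau|_{J_n}\approx-\lambda_u^{-1}\log|J_n|$, so that $\int\tau\,d\nu\asymp\sum_n\|q_n\rho\|\,(-\log|J_n|)$; the sizes $|J_n|$ are then estimated from the contraction/expansion produced by the critical point at $p$, in the spirit of the cross‑ratio and renormalisation estimates for Cherry flows. The outcome: if the divergence is negative, $\int\tau\,d\nu=\infty$, and a shrinking‑target Borel--Cantelli argument at $p$ shows that for $m$–a.e.\ $x$ one has $S_nG(x)/S_n\tau(x)\to g(\sigma)$ — the orbit spends, in the limit, full relative time near the saddle — so the unique physical measure is $\delta_\sigma$; if the divergence is $\ge 0$, then $\int\tau\,d\nu<\infty$, hence $G\in L^1(\nu)$, and by unique ergodicity of $\bar f$ (plus $\tau(x_n)/n\to0$ for $m$–a.e.\ $x$) one gets $S_nG(x)/S_n\tau(x)\to\int G\,d\nu\big/\int\tau\,d\nu=\int g\,d\mu$, where $\mu$ is the suspension of $\nu$ by $\tau$, carried by the quasi‑minimal set; this $\mu$ is then the unique physical measure. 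In either case the limit is independent of the $\Leb$–typical point $x$, so the physical measure is unique and its basin has full volume, proving Theorem~\ref{main.A}.

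\emph{Main obstacle.} The heart of the matter is the geometric estimate in the third step: the quantitative control of $\nu(J_n)$ and above all of $|J_n|$ in terms of the critical exponent $-\lambda_s/\lambda_u$ and the continued fraction of the rotation number, which is exactly where the fine geometry of Cherry flows (in the line of Saghin--Vargas) cannot be bypassed. A secondary technical point is that $\tau$ is unbounded and discontinuous, so unique ergodicity does not by itself give convergence of $S_n\tau/n$; controlling the rate at which a $\Leb$–typical orbit approaches $p$ — a shrinking‑target estimate for $\bar f$ near $p$ — is needed both to make the error term in the displayed identity harmless and to pin down the limit in each of the two regimes.
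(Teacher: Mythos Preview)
Your approach is genuinely different from the paper's, and the obstacle you flag as ``secondary'' is in fact the central difficulty that the paper's argument is designed to avoid.

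The paper does not attempt to control Birkhoff sums of the unbounded return time $\tau$ at all. For $\cD_X(\sigma^X)>0$ (the only case requiring new work; the case $\cD_X(\sigma^X)\le 0$ is quoted directly from Saghin--Vargas), it argues as follows. From \cite{SV} the ergodic measures on the attractor are exactly $\delta_{\sigma^X}$ and a non-trivial $\mu^X$, so every accumulation point of $\frac1T\int_0^T\delta_{\phi^X_s(x)}\,ds$ has the form $a\,\delta_{\sigma^X}+(1-a)\,\mu^X$. The key new input is $\cD_X(\mu^X)=0$: this follows from a general fact, proved via Liao's scaled linear Poincar\'e flow and a $C^1$ shadowing lemma, that any non-trivial ergodic measure of a surface flow has vanishing Lyapunov exponent. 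Once $\cD_X(\mu^X)=0$ and $\cD_X(\sigma^X)>0$, any accumulation point with $a\ge b>0$ forces $\frac1T\int_0^T div_X(\phi^X_s(x))\,ds>c>0$ along a subsequence; Liouville's formula then gives $\det\Phi_T|_{T_x\bT^2}>e^{cT}$ there, and a Borel--Cantelli volume count contradicts $\Leb(\bT^2)<\infty$. No shrinking-target estimate, no analysis of $\tau$, and no condition on the rotation number enters.

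By contrast, your route needs $\frac1n S_n\tau(x)\to\int\tau\,d\nu$ for \emph{Lebesgue}-a.e.\ $x$. Since $\nu$ is typically singular to $m$ and $\tau$ is unbounded, neither the Birkhoff theorem (wrong reference measure) nor unique ergodicity (wrong function class) yields this; the shrinking-target control you invoke is not established in the proposal and is not a standard fact for arbitrary irrational rotation numbers. So the proposal has a genuine gap in exactly the case the theorem is meant to settle. What the paper's method buys is that the whole issue of how fast $m$-typical orbits approach the critical point is replaced by a one-line volume argument, at the cost of proving the abstract Lemma on non-negative exponents via Liao theory. (A smaller point: your dichotomy places divergence zero on the wrong side; by \cite{SV} one has $\int\tau\,d\nu=\infty$ whenever $\cD_X(\sigma^X)\le 0$, not only when it is strictly negative.)
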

Throughout the paper we consider a Cherry flow $\phi^X$ on torus $\bT^2$ generated by a vector field $X$, with
a source $\tilde{\sigma}^X$ and a quasi-minimal attractor $\Lambda^X$, where $\Lambda^X$ is called {\it quasi-minimal}
if it is transitive, contains a saddle $\sigma^X$, and has neither periodic orbits nor saddle connections. 
We denote by $\cD_X(\sigma^X)$ the divergence at $\sigma^X$. It was observed by~\cite{M,MG,SV} 
that the behavior of physical measures for Cherry flows depends on the divergence at the saddle. And the flows
in general were assumed to be $\C^\infty$, this is because the $\C^\infty$ regularity implies that this flow is $\C^{1+\text{bounded variation}}$ linearizable 
at a neighborhood of $\sigma^X$ (see~\cite{MSMM}[Appendix]).

The existence of physical measures for $\C^\infty$ Cherry flows with non-positive {\it divergence} at the saddle was 
proved in~\cite{SV}[Theorem 1.1]. Theorem~\ref{main.A} is a corollary of the following 
result about the case of positive divergence at the saddle, where the regularity assumption on the flow is removed.

\begin{main}\label{main.B}

Suppose $\phi^X$ is $\C^1$ and $\cD_X(\sigma^X)>0$. Then $\phi^X$ admits a non-trivial physical 
measure supported on $\Lambda^X$, this measure is ergodic, non-hyperbolic, and whose basin has full volume.

\end{main}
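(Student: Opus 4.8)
The plan is to reduce the flow to its one-dimensional first return map, extract from the saddle the expansion needed to build an absolutely continuous invariant measure for that map, and then suspend it back to the flow. Fix a circle $\Sigma\subset\bT^2$ transverse to $X$ and meeting $\Lambda^X$, and let $f\colon\Sigma\setminus\{c\}\to\Sigma$ be the first return map, where $c$ is the unique point of $\Sigma$ whose forward orbit enters $W^s(\sigma^X)$. Then $f$ is an orientation preserving $\C^1$ embedding whose image misses an open arc — the trace of a neighbourhood of the unstable separatrices of $\sigma^X$ — i.e.\ a \emph{Cherry map}. Write $\lambda^s<0<\lambda^u$ for the eigenvalues of $DX(\sigma^X)$ and set $\beta:=-\lambda^s/\lambda^u$. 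Using hyperbolicity of $\sigma^X$, the passage map near $\sigma^X$ has the asymptotics dictated by the eigenvalues: near $c$ one has $|f'(t)|\asymp|t-c|^{\beta-1}$ and $\tau(t)\asymp-\log|t-c|$ for the return time $\tau$. The hypothesis $\cD_X(\sigma^X)=\lambda^s+\lambda^u>0$ is precisely the condition $\beta\in(0,1)$: the singularity of $f$ at $c$ is not flat but \emph{expanding}, with $|f'|\to\infty$ at $c$.

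Next I would produce an $f$-invariant absolutely continuous probability $\mu_f$ by an inducing scheme. Choosing an interval $J\subset\Sigma$ bounded away from $c$ with endpoints on the forward orbit of $c$, transitivity of $\phi^X$ on $\Lambda^X$ together with the Cherry structure of $f$ makes the first return map $F=f^{R}\colon J\to J$ a map with complete branches, each of which passes exactly once through the super-expanding region near $c$; hence $F$ is uniformly expanding, and — this is the point of routing every branch past $c$ — its distortion is dominated by the explosion of $|f'|$ at $c$ alone, so that $\C^1$ regularity of the flow suffices where $\C^{1+\alpha}$ is normally required. Thus $F$ has a unique absolutely continuous invariant probability $\nu$, ergodic, with $\nu(\{R>n\})$ summable, and $\mu_f$, the normalization of $\sum_{k\ge0}f^k_*\bigl(\nu|_{\{R>k\}}\bigr)$, is a well defined $f$-invariant a.c.\ probability on $\Sigma$ which is unique, ergodic, and whose basin has full Lebesgue measure in $\Sigma$. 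One also checks $\tau\in L^1(\mu_f)$: since $\beta<1$, $\mu_f$ charges the $\epsilon$-neighbourhood of $c$ only by $O(\epsilon^{\beta})$, and $-\log t$ is integrable at $0$.

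Let $\mu$ be the normalized suspension of $\mu_f$ with roof function $\tau$; by the previous step this is a finite, non-atomic, ergodic $\phi^X$-invariant probability, and it is supported on $\Lambda^X$. Lebesgue almost every $x\in\bT^2$ is not the source $\tilde\sigma^X$ and does not lie on the null set $W^s(\sigma^X)$, hence its forward orbit returns to $\Sigma$ infinitely often along an $f$-orbit; by a Fubini/transversality argument this return point is $\mu_f$-generic for a.e.\ $x$, and the Birkhoff theorem for the suspension flow, using $\tau\in L^1(\mu_f)$, then gives that the time averages of $x$ converge to $\int(\cdot)\,d\mu$. Therefore $B(\mu)$ has full volume, and $\mu$ is the unique physical measure.

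It remains to see that $\mu$ is non-hyperbolic. The transverse Lyapunov exponent of $\mu$ equals $\bigl(\int\log|f'|\,d\mu_f\bigr)\big/\bigl(\int\tau\,d\mu_f\bigr)$. Since $\phi^X$ has no periodic orbits, $f$ is semiconjugate to a rotation by an irrational number, so $h_{\mathrm{top}}(f)=0$ and a fortiori $h_{\mu_f}(f)=0$; as $\mu_f$ is absolutely continuous and $\log|f'|\in L^1(\mu_f)$, Rokhlin's formula for piecewise monotone maps yields $\int\log|f'|\,d\mu_f=h_{\mu_f}(f)=0$, so the transverse exponent of $\mu$ vanishes. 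I expect the main obstacle to be the construction in the second step: with merely $\C^1$ regularity the usual distortion and bounded-variation arguments for existence of the a.c.i.m.\ are unavailable, and the integrability $\tau\in L^1(\mu_f)$ is tight; both are overcome by exploiting that positive divergence makes the singularity at $c$ genuinely expanding ($\beta<1$) rather than flat — the very feature whose failure, when $\cD_X(\sigma^X)\le0$, forces the physical measure to collapse onto the Dirac mass at $\sigma^X$.
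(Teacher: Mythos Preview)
Your strategy is quite different from the paper's, and the obstacles you flag in your final paragraph are genuine gaps, not surmounted by the sketch. The construction of an a.c.i.m.\ for the induced map $F$ needs distortion control; your assertion that distortion is ``dominated by the explosion of $|f'|$ at $c$'' does not address the fact that away from $c$ the return map is merely $C^1$, and iterated $C^1$ maps admit no bounded-distortion estimate in general --- this is exactly why the existing literature on Cherry flows assumes $C^\infty$ (to obtain $C^{1+\mathrm{BV}}$ linearisation at $\sigma^X$, as the paper's introduction notes). Your use of Rokhlin's formula to deduce $\int\log|f'|\,d\mu_f=0$ has the same defect: that formula for piecewise monotone maps with an a.c.i.m.\ requires regularity of $\log|f'|$ (bounded variation, or H\"older) and can fail in pure $C^1$. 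So both the existence of the physical measure and its non-hyperbolicity rest on steps that are unavailable at the stated regularity.

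The paper's route is short and bypasses all of this. Existence of the third ergodic measure $\mu^X$ is quoted from Saghin--Vargas (Theorem~\ref{t.ergodicmeasures}(b)); absolute continuity is neither claimed nor needed. Non-hyperbolicity comes from a general $C^1$ fact about surface flows (Corollary~\ref{c.nonhyperbolicity}): any non-trivial ergodic measure has exponent zero, because a negative exponent would, via Liao's shadowing for the scaled linear Poincar\'e flow, produce infinitely many periodic sinks with uniform-size basins. The heart of the argument is then a volume computation: since $\cD_X(\mu^X)=0$ while $\cD_X(\sigma^X)>0$, every combination $a\,\delta_{\sigma^X}+(1-a)\,\mu^X$ with $a\ge b>0$ has divergence bounded below by some $c>0$; Liouville's formula (Lemma~\ref{l.div}) then shows that the set of points whose time-$n$ empirical measure lies near such a combination has volume $O(e^{-cn})$, and Borel--Cantelli forces $a=0$ for Lebesgue-a.e.\ point. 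No inducing, no entropy formula, no absolute continuity --- the non-hyperbolicity of $\mu^X$ enters only through $\cD_X(\mu^X)=0$, which is precisely what drives the estimate.
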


An ergodic measure is {\it non-trivial} means that it is not supported on critical elements, i.e., on 
singularities or on periodic orbits. Theorem~\ref{main.B} generalizes the results of~\cite{SV, LP2}. 
Different to most of the dynamical systems with physical measures, where the physical measures in general are 
either hyperbolic (see~\cite{BV,ABV}) or atomic (see~\cite{HY, SSV}), the physical measures in Theorem~\ref{main.B}
are non-hyperbolic and non-trivial. An interesting and important fact is that, the proof of Theorem~\ref{main.B} 
depends on the non-hyperbolicity of this measure.

In~\cite{M,MG}, a different kind of Cherry flows was considered, where $\tilde{\sigma}_0$ is a sink. This sink supports
automatically a physical measure, which is not the case we are interested.

We also consider the perturbation of Cherry flow. It turns out that the perturbation depends also on the 
divergence at the singularity. For a flow $Y$ sufficiently close to $X$, we denote by $\sigma^Y$ the analytic 
continuation of the hyperbolic saddle $\sigma^X$. A flow is called {\it star flow} if the critical elements 
of any $\C^1$ small perturbation are all hyperbolic.

\begin{main}\label{main.C}

Suppose $\phi^X$ is $\C^\infty$ and $\cD_X(\sigma^X)<0$. Then $\phi^X$ is a star flow. Moreover, there is a 
$\C^1$ neighborhood $\cU$ of $\phi^X$, such that every flow $Y\in\cU$ belongs to one of the following three cases: 
\begin{itemize}
\item has a saddle connection;
\item or is a Morse-Smale flow, whose 
non-wandering set consists two singularities and one periodic sink, 
\item or is still a Cherry flow, which admits 
a unique physical measure $\delta_{\sigma^Y}$ with basin $B(\delta_{\sigma^Y})=\bT^2\setminus \tilde{\sigma}^Y$.
\end{itemize}
\end{main}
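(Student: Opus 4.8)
The plan is to exploit the negative divergence at the saddle $\sigma^X$ to control the return map of the flow on a global cross-section. First I would set up the standard Cherry-flow picture: fix a circle $S$ transverse to the flow, meeting the stable separatrices of $\sigma^X$, and let $f_X\colon S\setminus \{p^-,p^+\}\to S$ be the (first-return) Poincaré map, which extends to a degree-one circle endomorphism with a flat or contracting critical behavior induced by the passage near $\sigma^X$. The key quantitative input is that $\cD_X(\sigma^X)<0$ means the ratio of eigenvalues $-\lambda^-/\lambda^+>1$ (the contracting eigenvalue dominates), so the local transition near the saddle is uniformly contracting in the $C^1$ metric; combined with the expansion coming from the source/rotation part, this still yields that $f_X$ (and $f_Y$ for $Y$ near $X$) has negative Schwarzian-type behaviour or, more robustly, that it is \emph{expanding away from the critical region while the critical region is a genuine contraction}. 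The first step is to make this precise and $C^1$-open: there is a neighborhood $\cU$ of $\phi^X$ such that for $Y\in\cU$ the return map $f_Y$ has exactly the same combinatorial type of singular set (two critical points coming from the two stable separatrices of $\sigma^Y$) and the derivative estimates persist.

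Next I would run the trichotomy by analyzing the rotation number $\rho(f_Y)$ of the circle map $f_Y$. Since $\sigma^Y$ is a hyperbolic saddle for all $Y\in\cU$ (star property, which itself follows because negative divergence forces any periodic orbit near the attractor to be a hyperbolic sink — there is no room to create saddle-node cycles), the only critical elements that can appear are singularities and periodic sinks. Case (i): if $\rho(f_Y)$ is rational, the map $f_Y$ has a periodic orbit; the negative-divergence contraction forces this periodic orbit to be attracting and, by the same contraction, \emph{unique} (two periodic sinks would require an intermediate repeller, impossible here). One then checks that either the periodic orbit meets the critical set — giving a saddle connection (one of the stable separatrices lands on it in finite time, i.e. the orbit passes through the saddle), the first bullet — or it does not, in which case its basin is open and dense, the flow is Morse–Smale with non-wandering set $\{\tilde\sigma^Y,\sigma^Y\}\cup\{\text{periodic sink}\}$, the second bullet. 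Case (ii): if $\rho(f_Y)$ is irrational, then $f_Y$ is semiconjugate to an irrational rotation, has no periodic orbit, and no saddle connection (a saddle connection would make a separatrix periodic, forcing rational rotation number); so $Y$ is again a Cherry flow. Finally, for this Cherry case I must identify the physical measure as $\delta_{\sigma^Y}$: here I invoke Theorem~\ref{main.A}/Theorem~\ref{main.B} applied to $Y$ — but $\cD_Y(\sigma^Y)<0$ by continuity of the divergence, so we are in the non-positive-divergence regime of~\cite{SV}, where the physical measure is the Dirac mass at the saddle and its basin is $\bT^2\setminus\tilde\sigma^Y$ because Lebesgue-a.e.\ orbit spends asymptotically all its time near $\sigma^Y$ (the negative divergence makes the saddle a Lebesgue-generic $\omega$-limit point).

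The step I expect to be the main obstacle is establishing the \emph{exclusivity and completeness} of the trichotomy in a $C^1$-open way — in particular, ruling out that a small perturbation produces a flow with a non-hyperbolic periodic orbit or with two coexisting sinks. This requires a genuinely quantitative use of $\cD_X(\sigma^X)<0$: one must show the distortion of $f_Y$ near the critical region is dominated by the hyperbolic contraction of the saddle transition, so that $f_Y$ cannot have wandering intervals and cannot have two attracting cycles. I would handle this via a Denjoy-type / real-bounds argument for the one-dimensional return map $f_Y$, using that the Schwarzian or cross-ratio distortion is controlled by the $C^{1+\mathrm{bv}}$ linearization at $\sigma^Y$ (available since $\phi^X$, hence nearby $\phi^Y$, is $C^\infty$ near the saddle), and that negative divergence gives the crucial sign. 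Once the one-dimensional map is rigidly understood, translating back to the flow (finite-time saddle connection vs.\ Morse–Smale vs.\ Cherry, and the basin statement) is routine, and the physical-measure claim in the Cherry case is exactly the negative-divergence result already cited.
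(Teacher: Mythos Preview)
Your outline has a genuine regularity gap that would make several steps fail. The neighborhood $\cU$ in Theorem~\ref{main.C} is a $C^1$ neighborhood, so a flow $Y\in\cU$ need only be $C^1$. Two of your key inputs require more than this. First, in the Cherry case you invoke the non-positive-divergence result of~\cite{SV} (Theorem~\ref{t.ergodicmeasures}(a)) to conclude $\delta_{\sigma^Y}$ is the physical measure; but that result is stated for $C^\infty$ flows, precisely because it uses the $C^{1+\text{bv}}$ linearization at the saddle. Second, your proposed resolution of the ``main obstacle'' --- ruling out non-hyperbolic periodic orbits or two coexisting sinks via Schwarzian/cross-ratio distortion and Denjoy-type real bounds for $f_Y$ --- again requires regularity of $f_Y$ that a merely $C^1$ flow $Y$ does not provide. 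Your remark that ``nearby $\phi^Y$ is $C^\infty$ near the saddle'' is simply false: $C^1$-proximity to a $C^\infty$ flow gives no smoothness for $Y$.

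The paper's proof avoids all of this by working entirely with soft ergodic-theoretic tools that are valid in $C^1$. The star-flow property comes from Theorem~\ref{t.criterionofstarflow}: since $\phi^X$ (which \emph{is} $C^\infty$) has only the two Dirac measures as ergodic measures by Theorem~\ref{t.ergodicmeasures}(a), every invariant measure near $\cM_X$ has negative divergence, so periodic orbits of nearby $Y$ are automatically hyperbolic sinks. Uniqueness of the sink is then a purely combinatorial count on the circle map (Corollary~\ref{c.periodicpoint}): two sinks would force a non-sink periodic orbit, contradicting the star property. Morse--Smale is finished with the Closing Lemma. For the Cherry case, instead of quoting~\cite{SV} for $Y$, the paper argues directly: if $\Lambda^Y$ supported a non-trivial ergodic measure $\mu^Y$, then by upper semi-continuity of the invariant-measure space $\mu^Y$ would be weak-$*$ close to $\cM_X$ and hence have strictly negative divergence; but Corollary~\ref{c.nonhyperbolicity} (which holds for $C^1$ flows, via Liao's shadowing in Lemma~\ref{l.nonnegative}) says any non-trivial ergodic measure of a two-dimensional flow has zero divergence --- contradiction. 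So $\delta_{\sigma^Y}$ is the unique ergodic measure on $\Lambda^Y$, and the basin statement follows. None of these steps needs anything beyond $C^1$ for $Y$.
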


\begin{main}\label{main.D}

Suppose $\phi^X$ is $\C^1$ and $\cD_X(\sigma^X)\geq 0$. Then $\phi^X$ is $\C^1$ approximated by non-hyperbolic 
flows with arbitrarily large number of periodic sinks.

\end{main}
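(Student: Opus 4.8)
The plan is to reduce the statement to the one-dimensional Poincar\'e return map, to create a long periodic orbit by a $\C^1$-small perturbation through mode-locking, and to use the non-hyperbolicity of the physical measure (Theorem~\ref{main.B}) to make that orbit, and many others like it, attracting. \emph{Step 1 (reduction).} Fix a circle cross section $\Sigma\cong S^1$ transverse to $\phi^X$ off $W^s(\sigma^X)$, and let $f\colon\Sigma\setminus\{c\}\to\Sigma$ be the first return map, $c=W^s(\sigma^X)\cap\Sigma$. Then $f$ is a $\C^1$ monotone degree-one circle map with a single singular point $c$ (one-sided limits with a gap), and the Dulac estimate for the saddle passage, with eigenvalues $\lambda_s<0<\lambda_u$ at $\sigma^X$, gives $|f'(x)|\asymp|x-c|^{\alpha-1}$ near $c$, where $\alpha=-\lambda_s/\lambda_u$. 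The hypothesis $\cD_X(\sigma^X)=\lambda_s+\lambda_u\ge0$ is exactly $\alpha\le1$, so $|f'|$ stays bounded below near $c$: the expansion accumulated in a single saddle passage is at most polynomial in the return depth, and this is the only place the sign of the divergence enters. Since $\Lambda^X$ has neither periodic orbits nor saddle connections, $\rho:=\rho(f)$ is irrational and $f$ has no periodic points. A $\C^1$-small perturbation of $X$ supported away from $\sigma^X,\tilde\sigma^X$ induces an essentially free $\C^1$-small perturbation of $f$ on compact subsets of $\Sigma\setminus\{c\}$, keeps the local saddle model (hence $\alpha$) fixed, and turns periodic orbits of the perturbed return map of multiplier of modulus $<1$ into periodic sinks of the perturbed flow.

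\emph{Step 2 (one sink).} Let $p_n/q_n\to\rho$ be the convergents of $\rho$. Since the rotation number depends continuously on the return map, a $\C^1$-arbitrarily-small perturbation of $X$ off the singularities produces a return map $g$ with $\rho(g)=p_n/q_n$ (for $n$ large) and, after one more arbitrarily small push, a hyperbolic period-$q_n$ orbit $x_0,\dots,x_{q_n-1}$ avoiding the gap. Its multiplier is $m_n=\prod_{i<q_n}|g'(x_i)|=\exp\big(\sum_{i<q_n}\log|g'(x_i)|\big)$. I claim $\tfrac1{q_n}\log m_n\to0$: this orbit shadows the closest-return combinatorics of the $\rho$-rotation, and the Birkhoff sum of $\log|g'|$ along it is controlled by exactly the estimate used to show that the physical measure of $\phi^X$ has zero transverse Lyapunov exponent (Theorem~\ref{main.B}); the delicate input is that $\log|g'|$ is \emph{unbounded} at $c$, and $\alpha\le1$ keeps the saddle-passage contribution sub-exponential in $q_n$. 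Now add many tiny bumps at the $\asymp q_n$ points $x_i$ lying at distance $\gtrsim\mathrm{const}$ from $c$ (where $g'=O(1)$), each lowering $|g'(x_i)|$ by at most $\varepsilon$: this multiplies $m_n$ by $\exp(-\Theta(q_n\varepsilon))$, so once $n$ is large enough that $\tfrac1{q_n}\log m_n<\varepsilon$ we reach $m_n<1$. This gives a flow $\C^1$-close to $\phi^X$ with a periodic sink.

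\emph{Step 3 ($N$ sinks).} Given $N$, run Step 2 at $N$ separated scales. Quasi-minimality gives $f^{k}(d)\to c$ along a subsequence, where $d=\lim_{x\to c^+}f(x)$, and the first-return map to shrinking intervals $I_1\supset I_2\supset\cdots\ni c$ of lengths $\delta_1\gg\delta_2\gg\cdots$ reproduces, after rescaling, the same type of Cherry dynamics. Inductively, having built $j$ sinks by perturbations living at scales $\delta_1,\dots,\delta_j$, pass to a scale $\delta_{j+1}\ll\delta_j$ inside a piece of $I_j$ disjoint from the immediate basins of the $j$ sinks (which have definite size on scale $\delta_j$); the near-return structure there is intact since all previous perturbations lived at scales $\gg\delta_{j+1}$. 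Apply Step 2 at this scale --- the required perturbation, $\C^1$-small on the rescaled picture, is $\C^1$-small in the original coordinates because rescaling only contracts absolute $\C^1$-sizes --- to obtain the $(j+1)$-st sink. After $N$ steps this yields $\phi^Y$, $\C^1$-close to $\phi^X$, with at least $N$ periodic sinks; $\phi^Y$ is non-hyperbolic because the mode-locking at the last scale may be stopped on the boundary of the tongue, leaving a saddle-node periodic orbit, or because one completes $Y$ to a Cherry flow retaining the sinks --- in either case $\phi^Y$ is not a star flow.

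\emph{Main obstacle.} The crux is the estimate $\tfrac1{q_n}\log m_n\to0$ in Step~2: one must control the Birkhoff sums of the unbounded weight $\log|g'|$ along the closest-return orbits, i.e.\ show that the polynomial expansion from the saddle passages and the contraction along the regular returns balance to sub-exponential order. This is precisely the non-hyperbolicity of the physical measure and relies on the $\C^{1+\mathrm{bv}}$ linearizability at $\sigma^X$ and the analysis behind Theorem~\ref{main.B}; if one only knew the weaker fact that $\frac1{q_n}\log m_n$ stays bounded, the scheme would still work, but if the orbits could accumulate expansion of genuinely exponential rate near $c$ it would fail. The secondary difficulty is the persistence of the near-return structure across the $N$ nested scales under the finitely many localized perturbations of Step~3, for which the scale separation $\delta_j\gg\delta_{j+1}$ is the essential mechanism.
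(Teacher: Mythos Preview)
Your approach differs substantially from the paper's, and its central step has a genuine gap. The paper argues by a soft bootstrapping that never touches return-map multipliers: assume for contradiction a uniform bound $l$ on the number of periodic sinks in a $\C^1$ neighbourhood of $X$. Every ergodic measure of $X$ has non-negative divergence (the saddle by hypothesis, the source trivially, and $\mu^X$---if it exists---by Corollary~\ref{c.nonhyperbolicity}), so by upper semi-continuity of the invariant-measure space every periodic sink of a nearby flow has divergence in $[-\varepsilon,0)$. Franks' Lemma for flows (Theorem~\ref{t.Franks}) then flips all $l$ sinks to sources with an $\varepsilon$-perturbation of the linear Poincar\'e flow; the circle-map combinatorics of Corollary~\ref{c.periodicpoint} ($n$ sources force $n$ further non-source periodic orbits) produces $2l-1$ new orbits; one more Franks-Lemma pass turns all $3l-1>l$ of them into sinks, a contradiction. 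Non-hyperbolicity is obtained at the end by using Franks' Lemma once more to set the divergence of one sink to zero.

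The crucial point is \emph{which} quantity is controlled. The divergence of a periodic orbit is $\frac{1}{\tau_n}\log m_n$ with $\tau_n$ the \emph{flow} period, and it is this that is automatically small by weak-$*$ continuity and that governs the size of the Franks-Lemma perturbation. Your Step~2 instead needs $\frac{1}{q_n}\log m_n$ small, with $q_n$ the \emph{return-map} period; the two differ by the factor $\tau_n/q_n$, the average first-return time, which blows up whenever the orbit passes close to the saddle. Theorem~\ref{main.B} (via Corollary~\ref{c.nonhyperbolicity}) gives only the former; it says nothing about Birkhoff sums of the unbounded weight $\log|g'|$ along closest-return orbits, and in the boundary case $\cD_X(\sigma^X)=0$ one has $\int\tau_X\,d\nu^X=\infty$, so $\tau_n/q_n\to\infty$ and your estimate cannot hold. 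Two further problems: the Dulac asymptotic $|f'(x)|\asymp|x-c|^{\alpha-1}$ and the $\C^{1+\mathrm{bv}}$ linearizability you invoke require $\C^\infty$ (as the paper notes), whereas Theorem~\ref{main.D} is stated for $\C^1$ flows; and Step~3 is not a proof---once you mode-lock at scale $\delta_j$ the rotation number is rational, the quasi-minimal set is destroyed, and there is no Cherry dynamics left at scale $\delta_{j+1}$ to which Step~2 could be reapplied.
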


The techniques of Liao in~\cite{L89} is important for us to remove the regularity assumption on 
the flows, which is mainly used in the proof of Lemma~\ref{l.nonnegative}. This author would like 
to thank Shaobo Gan for his explanation on Liao Theory.

\section{Preliminary}
%In this section we state the arithmetical and the analytical tools for the
%proof of Theorem B.
\subsection{Ergodic measures for Cherry flow}

One can always take a circle $\cS_0$ which does not bound a disk and is everywhere transverse to $X$ e.g., see~\cite{MSMM}
[Proposition 7.1]. The inverse of the Cherry flow $\phi^X$ is a suspension of a continuous circle map $g^X: \cS_0\to \cS_0$,
where $g^X$ is a monotone map and constant on an interval. Such circle maps were well studied in ~\cite{MSMM, GJSTV,Gr,LP1}. Note that $g^X$ has no periodic point, which implies that $g^X$ has 
irrational rotation number $\rho(g^X)$ and is semi-conjugate to the rotation $R_{\rho(g^X)}:x\to x + \rho(g^X) \mod\; 1$. The 
semi-conjugacy $\pi^X$ is continuous, monotone, has degree one and maps orbits of $g^X$ to orbits of $R_{\rho(g^X)}$. Because
the $\pi_X$ pre-image of every point in the circle is either a singular point or a non-trivial connected interval, 
and by the fact that a circle contains at most countable many disjoint open intervals, there are at most countable
many points have non-trivial $\pi^X$ pre-images.
In particular, this set has vanishing Lebesgue measure. Then $\nu^X=(\pi^X)^*\Leb|_{S^1}$ is well defined, and is the unique ergodic measure of $g^X$. For every $\theta\in \cS^1_0$, denote by $\tau_X(\theta)$ 
the first return time of $\theta$ for $\phi^X$. It is well known that $\int \tau_X(\theta) d\nu^X(\theta)<\infty$ if and only if
$$\mu^X=\int_{\cS^1} \int_{t\in[0, \tau_X(\theta))} \delta_{\phi^X_t(\theta)} d\nu^X(\theta)$$ 
is an ergodic measure of $\phi^X$. As a summary, we have that:

\begin{proposition}\label{p.numberofmeasures}

$\phi^X$ has three ergodic measures 
$\delta_{\sigma^X}$, $\delta_{\tilde{\sigma}^X}$ and $\mu^X$ if and only if $\int \tau_X(\theta) d\nu^X(\theta)<\infty$.

\end{proposition}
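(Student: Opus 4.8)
The plan is to package the standard suspension/return-time dichotomy together with the structure of ergodic measures already recalled before the statement. First I would observe that since $g^X$ has irrational rotation number and is semiconjugate to the rotation $R_{\rho(g^X)}$ via the monotone degree-one map $\pi^X$, and since only countably many points have nontrivial $\pi^X$-preimages (a Lebesgue-null set), the push-forward $\nu^X=(\pi^X)^*\Leb|_{S^1}$ is a well-defined Borel probability on $\cS_0$, it is $g^X$-invariant, and it is the unique $g^X$-invariant probability. Uniqueness follows because any $g^X$-invariant measure pushes forward under $\pi^X$ to an $R_{\rho(g^X)}$-invariant measure, which must be Lebesgue by unique ergodicity of the irrational rotation; then one pulls back, using that $\nu^X$ gives zero mass to the countable exceptional set and that $g^X$ has no wandering intervals relevant to the support, to recover $\nu^X$. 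In particular $\nu^X$ is ergodic for $g^X$.

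Next I would invoke the general suspension principle: the flow $\phi^X$ restricted to the return dynamics on $\cS_0$ is a suspension of $g^X$ with roof function $\tau_X$, and ergodic $\phi^X$-invariant probabilities supported on the saturation of $\cS_0$ are in bijection with ergodic $g^X$-invariant probabilities $\nu$ having $\int \tau_X\,d\nu<\infty$, via $\nu\mapsto \big(\int\tau_X\,d\nu\big)^{-1}\int_{\cS_0}\int_{0}^{\tau_X(\theta)}\delta_{\phi^X_t(\theta)}\,dt\,d\nu(\theta)$. Applying this with the unique ergodic measure $\nu=\nu^X$ of $g^X$: if $\int\tau_X\,d\nu^X<\infty$ then $\mu^X$ (suitably normalized) is a well-defined ergodic $\phi^X$-invariant probability, distinct from the two atomic measures $\delta_{\sigma^X}$ and $\delta_{\tilde\sigma^X}$ since it charges the transverse section; conversely, if $\int\tau_X\,d\nu^X=\infty$ then no finite ergodic measure arising from the section exists, so the only ergodic measures left are those supported off the saturation of $\cS_0$. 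Since $\cS_0$ is transverse to $X$ and does not bound a disk, the complement of its saturation consists precisely of orbits that never return, which (the source having no return, and by the quasi-minimality of $\Lambda^X$) forces any such ergodic measure to be supported on the singularities, i.e. $\delta_{\sigma^X}$ or $\delta_{\tilde\sigma^X}$. This gives exactly two ergodic measures in the divergent case and exactly three in the convergent case.

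The two atomic measures $\delta_{\sigma^X}$ and $\delta_{\tilde\sigma^X}$ are always ergodic $\phi^X$-invariant probabilities because $\sigma^X$ and $\tilde\sigma^X$ are singularities; and they are the only ergodic measures concentrated on critical elements, since by hypothesis $\Lambda^X$ has no periodic orbits and $\phi^X$ has no other singularities. Combining with the previous paragraph yields the claimed equivalence: $\phi^X$ has exactly the three ergodic measures $\delta_{\sigma^X}$, $\delta_{\tilde\sigma^X}$, $\mu^X$ if and only if $\int\tau_X\,d\nu^X<\infty$.

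The main obstacle I anticipate is the rigorous justification that every ergodic $\phi^X$-invariant probability not supported on $\{\sigma^X,\tilde\sigma^X\}$ must ``see'' the section $\cS_0$ and therefore come from a $g^X$-invariant measure with finite roof integral — in other words, ruling out exotic ergodic measures supported on the non-recurrent part or on the boundary behaviour near $\sigma^X$ where return times blow up. This is exactly where one uses that $\cS_0$ is an essential transverse circle (so its saturation is all of $\bT^2$ minus at most the unstable separatrices/stable manifold structure of $\sigma^X$ and the point $\tilde\sigma^X$), together with Poincaré recurrence for the ergodic measure in question: an ergodic measure giving positive mass to the saturation of $\cS_0$ must give positive mass to $\cS_0$ itself (by recurrence), hence induces a $g^X$-invariant measure on $\cS_0$, which by unique ergodicity of $g^X$ is $\nu^X$, and the Kac/Abramov formula then forces $\int\tau_X\,d\nu^X<\infty$. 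The remaining case — positive mass off the saturation — is handled by the topological description of Cherry flows recalled in the Preliminary section (no wandering intervals of the flow other than those collapsing onto the saddle's invariant manifolds), which confines such measures to the singularities.
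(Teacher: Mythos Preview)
The paper does not supply a formal proof of this proposition: it is stated ``as a summary'' of the paragraph immediately preceding it, where the semiconjugacy of $g^X$ to the irrational rotation, the unique ergodicity of $g^X$, and the standard suspension dictionary (``it is well known that $\int\tau_X\,d\nu^X<\infty$ if and only if $\mu^X$ is an ergodic measure of $\phi^X$'') are simply recalled. Your proposal is precisely a fleshed-out version of that sketch: you reprove unique ergodicity of $g^X$ via the semiconjugacy $\pi^X$, invoke the Ambrose--Kakutani/Abramov correspondence between ergodic flow measures meeting the section and ergodic base measures with integrable roof, and then use Poincar\'e recurrence plus Kac's formula to show that any nontrivial ergodic $\phi^X$-measure must induce $\nu^X$ on $\cS_0$ with finite roof integral. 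So the approach is the same as the paper's; you are just supplying the details the paper leaves to the reader.

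One small comment: the paper phrases the suspension for the \emph{inverse} flow $\phi^{-X}$ (this is what makes $g^X$ globally defined and continuous, with a flat interval rather than a discontinuity), whereas you implicitly work with $\phi^X$. This is harmless for the conclusion, since $\phi^X$ and $\phi^{-X}$ share the same invariant probabilities and the roof integral is unchanged, but if you want to match the paper's setup literally you should run your inducing argument for $\phi^{-X}$ and then translate back.
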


\begin{theorem}\label{t.ergodicmeasures}[~\cite{SV}[Theorems 1.1, 1.2]]

\begin{itemize}

\item[(a)] Suppose $\phi^X$ is $\C^\infty$ and $\cD_X(\sigma^X)\leq 0$. Then $\delta_{\tilde{\sigma}^X}$ and $\delta_{\sigma^X}$ 
are the only two ergodic measures for $\phi^X$. Consequently, $\delta_{\sigma^X}$ is a physical measure with basin equal 
to $\bT^2\setminus \tilde{\sigma}^X$. 
\item[(b)] Suppose $\phi^X$ is $\C^1$ and $\cD_X(\sigma^X)>0$. Then there exist exactly three ergodic measures for $\phi^X$:
$\delta_{\tilde{\sigma}^X}$, $\delta_{\sigma^X}$ and a third invariant probability measure $\mu^X$ supported on the 
quasi-minimal set $\Lambda^X$.
\end{itemize}

\end{theorem}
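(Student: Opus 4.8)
This is Theorems~1.1 and~1.2 of~\cite{SV}; I outline the argument. The plan is to derive both statements from Proposition~\ref{p.numberofmeasures}, which reduces everything to deciding when $\int\tau_X\,d\nu^X<\infty$, i.e.\ when the $\sigma$-finite $\phi^X$-invariant measure $\mu^X$ has finite total mass; the remaining work is a local analysis at the hyperbolic saddle $\sigma^X$.

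Fix a small flow-box $Q$ around $\sigma^X$ with stable and unstable eigenvalues $-\lambda<0<\mu$, so that $\cD_X(\sigma^X)=\mu-\lambda$. When $\phi^X$ is only $\C^1$ one has just a topological linearization on $Q$, whereas the $\C^\infty$ hypothesis in (a) yields the $\C^{1+\text{bv}}$ linearization of~\cite{MSMM}, which is what makes the distortion estimates below work. The return time $\tau_X$ is bounded on the complement of any neighborhood of the finite set $P\subset\cS_0$ of points whose forward orbit reaches $\sigma^X$ before returning to $\cS_0$. Fixing $p\in P$ and parametrizing a neighborhood of $p$ in $\cS_0$ by $s=d(\theta,p)$, a direct computation in $Q$ gives, as $s\to 0$, that the transit time through $Q$ is $\tau_X(\theta)=\tfrac1\mu\ln\tfrac1s+O(1)$ and that the first-return map $r$ of $\phi^X$ to $\cS_0$ has the power-law form $r(\theta)=q+C\,s^{\lambda/\mu}(1+o(1))$ near $p$, for constants $C>0$, $q\in\cS_0$; equivalently the circle map $g^X$, which is constant on an interval $J$, is asymptotically a power of exponent $\mu/\lambda$ at the endpoints of $J$, hence flat there precisely when $\cD_X(\sigma^X)>0$.

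Now integrate: since $\tau_X(\theta)\asymp|\ln d(\theta,p)|$ near each $p\in P$ and is bounded elsewhere, integrating by parts against the Stieltjes measure $\nu^X$ shows that $\int\tau_X\,d\nu^X<\infty$ if and only if $\int_0 s^{-1}\,\nu^X(\{\theta:d(\theta,p)<s\})\,ds<\infty$ for every $p\in P$. So everything reduces to the decay rate of $\psi_p(s):=\nu^X(\{\theta:d(\theta,p)<s\})$ as $s\to 0$: this integral is finite iff $\psi_p$ decays faster than $1/|\ln s|$. Using the $r$-invariance of $\nu^X$ (equivalently $g^X$-invariance), the power-law form of $r$ at $p$, and distortion control for $g^X$ coming from the regularity --- and, in the borderline case $\cD_X(\sigma^X)=0$, from the $\C^{1+\text{bv}}$ linearization --- one obtains two-sided bounds on $\psi_p$ showing that it decays fast enough exactly when $\lambda<\mu$. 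Hence $\int\tau_X\,d\nu^X<\infty\iff\cD_X(\sigma^X)>0$, and Proposition~\ref{p.numberofmeasures} gives (b): $\phi^X$ has precisely the three ergodic measures $\delta_{\sigma^X}$, $\delta_{\tilde\sigma^X}$ and the suspension $\mu^X$ of the non-atomic $\nu^X$, which is itself non-atomic and supported on $\Lambda^X$. In case (a) the same proposition leaves only $\delta_{\sigma^X}$ and $\delta_{\tilde\sigma^X}$.

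Finally, for the ``consequently'' in (a): every invariant probability is then a convex combination of $\delta_{\sigma^X}$ and $\delta_{\tilde\sigma^X}$. Since $\tilde\sigma^X$ is a hyperbolic source, there is a neighborhood $U\ni\tilde\sigma^X$ with $\phi_{-t}(U)\subset U$ for all $t>0$ and $\bigcap_{t\ge 0}\phi_{-t}(U)=\{\tilde\sigma^X\}$; hence the forward orbit of any $x\ne\tilde\sigma^X$ spends only finite total time in $U$, so every weak-$*$ accumulation point of the empirical measures $\tfrac1T\int_0^T\delta_{\phi_t(x)}\,dt$ charges no neighborhood of $\tilde\sigma^X$, and therefore equals $\delta_{\sigma^X}$; thus $B(\delta_{\sigma^X})=\bT^2\setminus\{\tilde\sigma^X\}$ and $\delta_{\sigma^X}$ is physical. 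The main obstacle is the estimate on $\psi_p$: it requires intertwining the eigenvalue ratio $\lambda/\mu$ with Denjoy-type distortion control for the Cherry circle map $g^X$ and the continued-fraction arithmetic of $\rho(g^X)$, the delicate point being the neutral case $\cD_X(\sigma^X)=0$, where the first-return map at $p$ is only asymptotically linear and the $\C^{1+\text{bv}}$ linearization is essential.
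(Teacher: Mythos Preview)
Your outline follows the original Saghin--Vargas argument, and the paper does not reprove this theorem in full: it simply cites \cite{SV}, in particular for part~(a). However, for part~(b) the paper does indicate an alternative route (the sentence ``The following lemma implies~\cite{SV}[Theorem 1.2]'' preceding Lemma~\ref{l.nonpositivedivergence}), and it is genuinely different from yours. Rather than analysing the return time $\tau_X$ near the saddle via distortion estimates on the circle map $g^X$ and the decay of $\psi_p(s)$, the paper observes that any $\C^1$ flow on a compact manifold must admit an ergodic measure with non-positive divergence: if not, empirical measures eventually have divergence bounded below by some $a>0$, and then by Lemma~\ref{l.div} the Jacobian of $\Phi_t$ grows like $e^{at}$ on a positive-volume set, contradicting $\Leb(\bT^2)<\infty$. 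When $\cD_X(\sigma^X)>0$ both Dirac masses $\delta_{\sigma^X}$ and $\delta_{\tilde\sigma^X}$ have positive divergence, so this forces a third ergodic measure; Proposition~\ref{p.numberofmeasures} then caps the count at three.

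The trade-off: your approach is constructive and yields quantitative information about $\mu^X$, but requires the Denjoy-type distortion control and continued-fraction input you flag as the main obstacle, with the borderline case $\cD_X(\sigma^X)=0$ genuinely delicate. The paper's argument for~(b) is soft, needs only $\C^1$ regularity, and uses nothing about $\rho(g^X)$ or the fine structure of $g^X$ beyond Proposition~\ref{p.numberofmeasures}; on the other hand it gives no information in the case $\cD_X(\sigma^X)\le 0$, for which the paper still defers to \cite{SV}. Your deduction of the ``consequently'' clause in~(a) from the two-measure classification is the standard one and matches what is implicit in the paper.
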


\subsection{Ergodic theory for flows}
Throughout this subsection, we consider $\phi^Y$ a flow over surface $M$ which is generated by vector field $Y$, 
and it admits a non-trivial ergodic measure $\mu$. Denote by $\Sing(Y)$ the set of singularities.

The {\it linear Poincar\'e flow $\psi_t$} is defined as following. Denote the normal bundle of $\phi^Y$ over $M$ by
$$\cN_{M}=\cup_{x\in M\setminus \Sing(Y)}\cN_x,$$
where $\cN_x$ is the orthogonal complement of the flow direction $Y(x)$, i.e.,
$$\cN_x=\{v\in T_xM: v\bot Y(x)\}.$$
Denote the orthogonal projection of $T_xM$ to $\cN_x$ by $\pi_x$. Write the tangent flow by $\Phi_t=d \phi^Y_t: TM\to TM$. 
Given $v\in\cN_x, x\in \Lambda\setminus \sigma$, $\psi_t(v)$ is the orthogonal projection of $\Phi_t(v)$ on $\cN_{\phi^Y_t(x)}$ 
along the flow direction, i.e., $$\psi_t(v)=\pi_{\phi^Y_t(x)}(\Phi_t(v))=\Phi_t(v)-\frac{<\Phi_t(v),Y(\phi^Y_t(x))>}{\|Y(\phi^Y_t(x))\|^2}Y(\phi^Y_t(x)),$$
where $<.,.>$ is the inner product on $T_xM$ given by the Riemannian metric. 

\subsubsection{Oseledets Theorem for flows}

The following is the flow version of Oseledets theorem.

\begin{proposition}\label{p.Oseledets}

There exists a real number $\lambda$ such that for $\mu$ almost every $x$:
$$\underset{t\to \pm\infty}{\lim} \frac{1}{t}\log \|\psi_t(v)\|=\lambda\;\; \text{for every non-zero}\;\; v\in \cN_x \setminus \{0\}.$$

\end{proposition}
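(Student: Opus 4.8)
The plan is to deduce the flow version of the Oseledets theorem from the classical Oseledets theorem applied to a suitable cocycle, exploiting the fact that $\mu$ is non-trivial, hence gives zero mass to $\Sing(Y)$, so the linear Poincar\'e flow $\psi_t$ is defined $\mu$-almost everywhere. Since $M$ is a surface, each normal bundle fiber $\cN_x$ is one-dimensional, so the only thing to prove is that the single Lyapunov exponent of the cocycle $\psi_t$ is $\mu$-almost everywhere equal to a constant $\lambda$; the statement that the limit exists and is independent of the nonzero vector $v\in\cN_x$ is then automatic from one-dimensionality.

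First I would set up the discrete-time cocycle. Over the full-measure invariant set $\Lambda_0 = M\setminus\Sing(Y)$ on which $\phi^Y$ is defined for all times, the family $\psi_t$ satisfies the cocycle identity $\psi_{t+s} = \psi_s\circ\psi_t$ because it is the composition of the tangent flow with orthogonal projections along the flow direction, and the flow direction is $\Phi_t$-invariant. Restricting to the time-one map $f = \phi^Y_1$ and the linear map $A(x) = \psi_1|_{\cN_x}\colon \cN_x\to \cN_{f(x)}$, I get a measurable linear cocycle over the measure-preserving system $(f,\mu)$. The integrability condition $\int \log^+\|A(x)\|\,d\mu(x) < \infty$ (and the same for $A^{-1}$) needs to be checked: this follows from continuity of $Y$, $\Phi_1$ and the projections on the compact surface $M$ together with the fact that $\mu(\Sing(Y)) = 0$, once one observes that $\|\psi_1\|$ and $\|\psi_{-1}\|$ are bounded on $\Lambda_0$ — indeed $\Phi_1$ is bounded on all of $TM$ by compactness, and the projections $\pi_x$ have norm one, so in fact $\log^+\|A\|$ is bounded above and the integral is trivially finite. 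I would remark that no lower bound on $\|Y\|$ is needed here precisely because orthogonal projection never increases norm.

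Next I would apply the classical (discrete) Oseledets multiplicative ergodic theorem to the cocycle $A$ over the ergodic system $(f,\mu)$. Since $\dim\cN_x = 1$, there is a single Lyapunov exponent $\lambda$, constant $\mu$-a.e.\ by ergodicity, with $\lim_{n\to\pm\infty}\frac1n\log\|A^{(n)}(x)v\| = \lambda$ for every nonzero $v\in\cN_x$ and $\mu$-a.e.\ $x$, where $A^{(n)}$ is the $n$-th iterate. Passing from integer time $n$ to continuous time $t$ is a standard interpolation argument: for $t\in[n,n+1]$ write $\psi_t = \psi_{t-n}\circ\psi_n$, use $\frac1t\log\|\psi_n(v)\| \to \lambda$, and control the bounded factor $\frac1t\log\|\psi_{t-n}(\cdot)\|$ using that $\sup_{0\le s\le 1}\log\|\psi_s\|$ and the analogous lower bound are bounded on the relevant full-measure set (again by compactness of $M$ and boundedness of $\Phi_s$ for $s\in[0,1]$); dividing by $t\to\infty$ kills this term. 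The same argument with $t\to-\infty$ uses $\psi_{-1}$ in place of $\psi_1$.

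The main obstacle, such as it is, is bookkeeping rather than conceptual: one must be slightly careful that the "bad" set where $Y$ is small is harmless. The subtlety that could trip someone up in the general flow setting — that $\|Y(\phi^Y_t(x))\|$ can shrink when orbits pass near singularities, which would spoil integrability of $\log\|\Phi_t\|$ — does not arise for $\psi_t$ itself because orthogonal projection is norm-nonincreasing, so $\|\psi_t\|$ stays bounded along the whole orbit; the care is only needed if one tried to relate $\psi_t$ to $\Phi_t$ quantitatively, which this proposition does not require. So I would present the proof as: (i) note $\mu(\Sing Y)=0$ and the cocycle property of $\psi_t$; (ii) verify integrability of $\log^+\|\psi_{\pm 1}\|$ by compactness; (iii) apply discrete Oseledets plus ergodicity to get the constant $\lambda$ in the one-dimensional bundle; (iv) interpolate to continuous time. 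I expect step (iv), the continuous-time interpolation, to be the only place demanding an explicit estimate, and even there it is the routine $\frac1t(\text{bounded})\to 0$ trick.
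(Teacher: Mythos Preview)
The paper does not supply a proof of this proposition; it is stated as a known fact (``the flow version of Oseledets theorem''), so there is no argument to compare yours against. Your outline is essentially the standard route and is fine in spirit, but there is one genuine slip.

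You write that you will ``apply the classical (discrete) Oseledets multiplicative ergodic theorem to the cocycle $A$ over the ergodic system $(f,\mu)$'' with $f=\phi^Y_1$. But ergodicity of $\mu$ for the \emph{flow} does not imply ergodicity for the time-one map; in general $\mu$ can split into several ergodic components for $\phi^Y_1$ (think of a periodic orbit of period $1/2$). So your appeal to ergodicity at step~(iii) to force the exponent to be a constant is not justified as written. There are two easy repairs. First, you can do exactly what the paper does in the paragraph immediately following the proposition: choose $t_0>0$ so that $\mu$ \emph{is} ergodic for $\phi^Y_{t_0}$ (such $t_0$ always exist---all but countably many work), run your argument with that $t_0$ in place of $1$, and then rescale. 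Second, you can keep $f=\phi^Y_1$, apply Oseledets (or, since the bundle is one-dimensional, just Birkhoff) to get an $f$-invariant measurable function $\lambda(x)$, and then observe that the very interpolation estimate you use in step~(iv) shows $\lambda(\phi^Y_s x)=\lambda(x)$ for every $s$, so $\lambda$ is flow-invariant and hence constant by flow-ergodicity. Either fix is short; just make sure one of them is actually written down.

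Everything else in your plan is sound: the boundedness of $\|\psi_{\pm 1}\|$ via $\|\pi_x\|=1$ and compactness of $M$ is correct and, as you note, sidesteps any difficulty from orbits approaching $\Sing(Y)$.
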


When the time $t_0>0$ is fixed, $\phi^Y_{t_0}$ is a diffeomorphism and $\mu$ is still an invariant measure of this
diffeomorphism. Suppose $\hat{\mu}$ is an ergodic decomposition of $\mu$ for $\phi^Y_{t_0}$, then 
$$\mu=\int_{t\in [0,t_0)} d (\phi^Y_t)_* \hat{\mu}.$$
We may always choose $t_0$ such that $\mu$ is ergodic for $\phi^Y_{t_0}$. Taking such a time $t_0$ and 
denote $f=\phi^Y_{t_0}$. Now let us state the following relation between Lyapunov exponent of $\mu$ for $\phi^Y_t$ and for $f$, whose
proof is simple and we will not provide.

\begin{theorem}\label{t.exponentsdiffandflow}
The Lyapunov exponents of $\mu$ for $f$ coincide to $\{t_0\lambda\}\cup \{0\}$, and for $\mu$ almost every $x\in M
\setminus \Sing(Y)$, the flow direction $<Y(x)>$ is contained in the subbundle of the Oseledets splitting of $f$ corresponding 
to zero exponent. Moreover, we have $\lim \frac{1}{n}\log \det(Df^n(x))=t_0\lambda$.
\end{theorem}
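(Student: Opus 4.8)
The plan is to compare growth rates direction by direction on the tangent plane, exploiting that the flow direction is invariant under the tangent cocycle while the complementary (normal) direction is governed by the linear Poincaré flow. Fix $t_0$ as in the statement, so that $f=\phi^Y_{t_0}$ and $\mu$ is $f$-ergodic, and work on the full $\mu$-measure set $R$ of points $x\in M\setminus\Sing(Y)$ that are Oseledets-regular for $f$, Birkhoff-regular for the continuous function $\log|\det Df|$, and at which the conclusion of Proposition~\ref{p.Oseledets} holds. For $x\in R$ one has $T_xM=\langle Y(x)\rangle\oplus\cN_x$ with both summands one-dimensional; the line $\langle Y(x)\rangle$ is $\Phi_t$-invariant because $\Phi_t(Y(x))=Y(\phi^Y_t(x))$, and the restriction of $\psi_{nt_0}$ to $\cN_x$ is exactly the $n$-th iterate of the cocycle generated by $\psi_{t_0}$, by the cocycle property of the linear Poincaré flow.

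Next I would record an elementary area identity. Choosing a unit vector $v\in\cN_x$, the parallelogram spanned by $Y(x)/\|Y(x)\|$ and $v$ has unit area, and its image under $Df^n=\Phi_{nt_0}$ is spanned by $Y(f^nx)/\|Y(x)\|$ and $\Phi_{nt_0}(v)$, whose area is $\|Y(f^nx)\|/\|Y(x)\|$ times the length of the component of $\Phi_{nt_0}(v)$ orthogonal to the flow, i.e.\ $\|\psi_{nt_0}(v)\|$. Hence
\[
\log|\det Df^n(x)|=\log\frac{\|Y(f^nx)\|}{\|Y(x)\|}+\log\|\psi_{nt_0}(v)\|.
\]
Dividing by $n$ and sending $n\to\infty$: the left side converges, by Birkhoff and $f$-ergodicity, to the constant $D:=\int\log|\det Df|\,d\mu$ (the function $\log|\det Df(x)|=\int_0^{t_0}\operatorname{div}Y(\phi^Y_s(x))\,ds$ is bounded since $\operatorname{div}Y$ is continuous on the compact surface $M$); the term $\tfrac1n\log\|\psi_{nt_0}(v)\|$ converges to $t_0\lambda$ by Proposition~\ref{p.Oseledets}; and $\tfrac1n\log\|Y(x)\|\to0$. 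Therefore $\lim_n\tfrac1n\log\|Y(f^nx)\|$ exists and equals the constant $c:=D-t_0\lambda$.

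The crux is to show $c=0$, i.e.\ that the flow direction carries exponent zero for $f$. If $c>0$ then $\|Y(f^nx)\|\to\infty$, impossible since $Y$ is bounded on $M$. If $c<0$ then $\|Y(f^nx)\|\to0$ exponentially, so $f^nx\to\Sing(Y)$; but $\mu$ is ergodic and non-trivial, hence $\mu(\{\|Y\|>\varepsilon\})>0$ for some $\varepsilon>0$, and Poincaré recurrence forces $\limsup_n\|Y(f^nx)\|\ge\varepsilon$ for $\mu$-a.e.\ $x$, a contradiction. Thus $c=0$, which yields at once: the last assertion $\lim_n\tfrac1n\log\det Df^n(x)=D=t_0\lambda$; that $\langle Y(x)\rangle$ has Lyapunov exponent $0$ for $f$; and, since on a surface the two exponents of $f$ sum to $D=t_0\lambda$, that the other exponent is $t_0\lambda$, so the $f$-spectrum is $\{t_0\lambda\}\cup\{0\}$. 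When $t_0\lambda\ne0$ the exponents $0$ and $t_0\lambda$ are distinct, the $f$-Oseledets splitting is $E^0_x\oplus E^{t_0\lambda}_x$, and $Y(x)$ lies in $E^0_x$ because it realizes exponent $0$; when $t_0\lambda=0$ the whole tangent plane is the zero-exponent subbundle and the claim is vacuous. The only genuinely delicate point is the dichotomy ruling out $c\ne0$, which is why I would argue via recurrence rather than trying to establish integrability of $\log\|Y\|$ near the hyperbolic saddle.
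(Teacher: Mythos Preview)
The paper does not actually prove this theorem: it states it and adds ``whose proof is simple and we will not provide.'' Your argument is correct and supplies exactly the details the paper omits. The area identity
\[
\det Df^n(x)=\frac{\|Y(f^nx)\|}{\|Y(x)\|}\,\|\psi_{nt_0}(v)\|,\qquad v\in\cN_x,\ \|v\|=1,
\]
together with Birkhoff for the bounded function $\log|\det Df|$ and Proposition~\ref{p.Oseledets} for the Poincar\'e flow term, is the natural decomposition; your recurrence dichotomy (boundedness of $\|Y\|$ rules out $c>0$, Poincar\'e recurrence for the non-trivial $\mu$ rules out $c<0$) is clean and avoids any integrability issue for $\log\|Y\|$ near the saddle.

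One small remark on the last paragraph: the sentence ``$Y(x)$ lies in $E^0_x$ because it realizes exponent $0$'' is immediate when $t_0\lambda>0$ (zero is then the smaller exponent, and only vectors in $E^0$ have forward growth rate $0$), but when $t_0\lambda<0$ a vector with a nonzero $E^0$-component and a nonzero $E^{t_0\lambda}$-component also has forward exponent $0$. To pin $Y(x)$ to $E^0_x$ in that case you should either invoke the backward-time version of your computation (the identical argument applied to $f^{-1}$ gives $\lim_n\tfrac{1}{n}\log\|Y(f^{-n}x)\|=0$ as well), or simply observe that the line field $x\mapsto\langle Y(x)\rangle$ is $Df$-invariant and measurable, hence must coincide $\mu$-a.e.\ with one of the two Oseledets lines when the exponents are distinct. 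Either remark costs one sentence and completes the argument.
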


\begin{remark}\label{r.measurevolume}

The above theorem implies that, for $\mu$ almost every $x$, 
$$\lim_{t\to \infty} \frac{1}{t} \det \Phi_t|_{T_xM}=\lambda.$$

\end{remark}

\begin{definition}

A non-trivial ergodic measure $\mu$ for flow $\phi^Y$ is a {\it hyperbolic measure} if its exponent is non-vanishing. 

\end{definition}

\subsubsection{Divergence}

We need the following version of Liouville Theorem.

\begin{lemma}\label{l.div}

For any $x\in M$ and $t>0$, 
$$\ln \det \Phi_t|_{T_xM}=\int_0^t div_Y(\phi^Y_s(x)) ds.$$

\end{lemma}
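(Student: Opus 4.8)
The plan is to recognize this as a direct consequence of the variational (Jacobi) equation for the tangent flow together with the classical Liouville formula for linear non-autonomous ODEs. First I would fix $x \in M$ and consider the curve $t \mapsto \Phi_t|_{T_xM} \in GL(T_xM)$, which by definition of the tangent flow $\Phi_t = d\phi^Y_t$ satisfies the linear variational equation
\[
\frac{d}{dt}\Phi_t\big|_{T_xM} = DY(\phi^Y_t(x)) \circ \Phi_t\big|_{T_xM}, \qquad \Phi_0\big|_{T_xM} = \mathrm{id},
\]
where $DY$ denotes the derivative of the vector field (this is just the statement that differentiating the flow equation $\frac{d}{dt}\phi^Y_t(x) = Y(\phi^Y_t(x))$ in the $x$ variable commutes with $\frac{d}{dt}$). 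The key step is then Abel–Liouville: for a matrix solution $A(t)$ of $\dot A = B(t) A$ one has $\frac{d}{dt}\det A(t) = \mathrm{tr}(B(t)) \det A(t)$, hence $\det A(t) = \det A(0)\exp\left(\int_0^t \mathrm{tr}(B(s))\,ds\right)$.

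Applying this with $B(s) = DY(\phi^Y_s(x))$ and $A(0) = \mathrm{id}$ gives
\[
\det \Phi_t\big|_{T_xM} = \exp\left(\int_0^t \mathrm{tr}\, DY(\phi^Y_s(x))\, ds\right),
\]
and since $\mathrm{tr}\, DY = \mathrm{div}_Y$ by definition of the divergence, taking logarithms yields the claimed identity. I would also note that $\det \Phi_t|_{T_xM} > 0$ for all $t$ (it is $1$ at $t=0$ and never vanishes since $\Phi_t$ is invertible), so the logarithm is legitimate and no sign ambiguity arises.

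The only mild subtlety — hardly an obstacle — is the regularity: the Abel–Liouville argument in its textbook form wants $Y \in \C^1$ so that $\det \Phi_t|_{T_xM}$ is differentiable in $t$; under the $\C^1$ standing hypothesis on $\phi^X$ this is exactly what we have, and $s \mapsto \mathrm{div}_Y(\phi^Y_s(x))$ is continuous so the integral is well defined. One should also confirm that the identity is independent of the choice of local coordinates/frame used to compute the determinant, which is immediate because a change of frame conjugates $\Phi_t|_{T_xM}$ and leaves both $\det$ and the trace of the generator invariant. Thus the proof reduces to assembling these standard facts, and there is no genuine difficulty.
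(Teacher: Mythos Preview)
Your proof is correct and follows essentially the same approach as the paper: both differentiate $\ln\det\Phi_t|_{T_xM}$ in $t$ and identify the derivative as $\operatorname{div}_Y(\phi^Y_t(x))$ via the variational equation. The only difference is packaging---you invoke the Abel--Liouville formula directly, while the paper (restricting to $\bT^2$ with the flat metric, as noted just before the lemma) unpacks this same computation explicitly in local coordinates $(x,y)$, writing out the $2\times 2$ Jacobian of $\phi^Y_{\Delta s}$ and differentiating its determinant at $\Delta s=0$.
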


The definition of divergence depends on the Riemmanian metric. Because in this paper we only consider
two dimensional torus, for simplicity, in Appendix we provide a short proof of the above lemma with
the assumption that $M=\bT^2$ and the divergence is defined by the flat metric on torus. More precisely,
every small open set of $\bT^2$ can be looked as a subset of $\mathbb{R}^2$, choosing such a local coordinate 
$(x,y)$, in this coordinate we may write $Y=(Y_1, Y_2)$ and $\phi^Y_t=(f_t(x,y),g_t(x,y))$. 
Then for any $x\in \bT^2$, the {\it divergence of $Y$ at $x$} is defined by $div_Y(x)=\frac{\partial Y_1}{\partial x}+\frac{\partial Y_2}{\partial y}$.

\begin{definition}\label{d.div}
Let $\Orb(p)$ be a periodic orbit of $\phi^Y$ with period $\tau_Y(p)$:
\begin{itemize}
\item
the {\it divergence of $\Orb(p)$} is defined by 
$$\cD_Y(\Orb(p))=\frac{1}{\tau_Y(p)}\int_0^{\tau_Y(p)} div_Y(\phi^Y_s(p)) ds;$$
\item 
the {\it divergence of $\mu$} is defined by $\cD_Y(\mu)=\int div_Y(x) d\mu(x)$.

\end{itemize}

\end{definition}
\begin{remark}\label{r.div}
For an atomic measure $\delta_{\sigma}$ supported on a singularity $\sigma\in \Sing(Y)$, 
$\cD_Y(\delta_\sigma)$ coincides to the classical definition of divergence at $\sigma$, $\cD_Y(\sigma)$. And when 
$\mu$ is supported on a periodic orbit $\Orb(p)$, the above two definitions coincide, and by Lemma~\ref{l.div}, 
which equal to $\ln \det \Phi_{\tau_Y(p)}|_{T_pM}=\ln\|\psi_{\tau_Y(p)}|_{\cN_p}\|$.
\end{remark}

As a corollary of Remark~\ref{r.measurevolume} and Lemma~\ref{l.div}, by Birkhoff Ergodic Theorem, we obtain that:

\begin{corollary}\label{c.divformeasures}
$\cD_Y(\mu)$ coincides to the Lyapunov exponent of $\mu$ for the flow $\phi^Y$.

\end{corollary}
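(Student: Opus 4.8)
The plan is to combine the divergence identity of Lemma~\ref{l.div} with the dynamical characterization of the Lyapunov exponent given in Remark~\ref{r.measurevolume}, applying the Birkhoff Ergodic Theorem to pass from time averages to the space average $\cD_Y(\mu)=\int \operatorname{div}_Y(x)\,d\mu(x)$. First I would fix a $\mu$-generic point $x$ (in the full-measure set where both the Birkhoff averages of the continuous function $\operatorname{div}_Y$ converge and the conclusion of Remark~\ref{r.measurevolume} holds); note that since $\mu$ is non-trivial it gives no mass to $\Sing(Y)$, so $\operatorname{div}_Y$ is continuous and bounded $\mu$-a.e.\ along such orbits. By Lemma~\ref{l.div} we have, for every $t>0$,
\[
\tfrac{1}{t}\ln \det \Phi_t|_{T_xM}=\tfrac{1}{t}\int_0^t \operatorname{div}_Y(\phi^Y_s(x))\,ds .
\]

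Next I would let $t\to\infty$. The left-hand side converges to the Lyapunov exponent $\lambda$ of $\mu$ for the flow $\phi^Y$ by Remark~\ref{r.measurevolume} (which in turn rests on Proposition~\ref{p.Oseledets} and Theorem~\ref{t.exponentsdiffandflow}, identifying $\lim_t \tfrac1t\ln\det\Phi_t|_{T_xM}$ with $\lambda$). The right-hand side is a Birkhoff time-average of the continuous observable $\operatorname{div}_Y$ along the flow; since $\mu$ is ergodic, the Birkhoff Ergodic Theorem for flows gives that for $\mu$-a.e.\ $x$ this average converges to $\int \operatorname{div}_Y\,d\mu = \cD_Y(\mu)$. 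Equating the two limits yields $\lambda=\cD_Y(\mu)$, which is exactly the assertion.

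The only point requiring a little care — and the closest thing to an obstacle — is the integrability needed to invoke the Birkhoff Ergodic Theorem: $\operatorname{div}_Y$ is continuous on the compact surface $M$ away from $\Sing(Y)$, but a priori unbounded near a singularity, so one should check $\operatorname{div}_Y\in L^1(\mu)$. This is immediate here because $\mu$ is non-trivial, hence $\mu(\Sing(Y))=0$, and in the cases of interest (the Cherry measure $\mu^X$, or a measure on a periodic orbit) $\operatorname{div}_Y$ is bounded on the support; more generally one may write $\mu$ via its ergodic-for-$f$ decomposition as in Theorem~\ref{t.exponentsdiffandflow} and apply the discrete Birkhoff theorem to $f=\phi^Y_{t_0}$ with the observable $x\mapsto \int_0^{t_0}\operatorname{div}_Y(\phi^Y_s(x))\,ds$, which is continuous and bounded on $\operatorname{supp}\mu$. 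Once integrability is in place the argument is just the two limit computations above, so no further estimates are needed.
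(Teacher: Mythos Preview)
Your proposal is correct and follows precisely the route the paper indicates: combine Lemma~\ref{l.div} with Remark~\ref{r.measurevolume} and apply the Birkhoff Ergodic Theorem. One small remark: your integrability worry is unnecessary, since for a $C^1$ vector field $Y$ the divergence $\operatorname{div}_Y$ is the trace of $DY$ and is therefore continuous (hence bounded) on all of the compact manifold $M$, including at singularities.
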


The following lemma implies~\cite{SV}[Theorem 1.2]:

\begin{lemma}\label{l.nonpositivedivergence}

There exists an ergodic measure $\mu$ of $Y$ such that $\cD_Y(\mu)\leq 0$.

\end{lemma}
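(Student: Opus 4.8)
The plan is to exhibit an ergodic measure whose divergence (equivalently, by Corollary~\ref{c.divformeasures}, whose Lyapunov exponent for the flow) is non-positive, and the natural candidate is forced on us by Proposition~\ref{p.numberofmeasures} and Theorem~\ref{t.ergodicmeasures}: either $\int \tau_X\,d\nu^X = \infty$, in which case $\delta_{\sigma^X}$ and $\delta_{\tilde\sigma^X}$ are the only ergodic measures and we argue directly with $\delta_{\sigma^X}$; or $\int\tau_X\,d\nu^X<\infty$ and the non-trivial measure $\mu^X$ exists on $\Lambda^X$, which is where the real content lies. Note the lemma is stated for $\C^1$ flows with no sign hypothesis on $\cD_X(\sigma^X)$, so I cannot simply quote part~(a) of Theorem~\ref{t.ergodicmeasures}; the argument must be uniform.

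First I would dispose of the degenerate alternative: if the only ergodic measures are the two atomic ones, then since $\tilde\sigma^X$ is a source it has $\cD_X(\tilde\sigma^X)>0$, so that one is no good, but $\sigma^X$ is a saddle and I claim $\cD_X(\sigma^X)\le 0$ in this case. Indeed, the saddle has one positive and one negative eigenvalue, say $\lambda^u>0>\lambda^s$, and $\cD_X(\sigma^X)=\lambda^u+\lambda^s$; the point is that $\int\tau_X\,d\nu^X=\infty$ exactly when the orbit spends enough time near the saddle, which by the standard Cherry-flow return-time estimate (as in~\cite{SV,MSMM}) happens precisely when $\lambda^u+\lambda^s\le 0$, i.e. when $|\lambda^s|\ge\lambda^u$. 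So in this case $\cD_X(\delta_{\sigma^X})=\cD_X(\sigma^X)\le 0$ and we are done by Remark~\ref{r.div} and Corollary~\ref{c.divformeasures}.

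Second, in the generic alternative $\int\tau_X\,d\nu^X<\infty$, I would work with $\mu^X$ supported on $\Lambda^X=\overline{W^u(\sigma^X)}$ and show $\cD_X(\mu^X)\le 0$ by a volume/recurrence argument. The idea: by Remark~\ref{r.measurevolume}, $\cD_X(\mu^X)$ is the asymptotic exponential rate of $\det\Phi_t|_{T_xM}$ for $\mu^X$-a.e.\ $x$; if this rate were strictly positive, then the two-dimensional Jacobian of the time-$t$ flow would grow exponentially along a full-measure set of orbits that are nonetheless recurrent (since $\mu^X$ is ergodic and non-atomic) and confined to the compact invariant set $\Lambda^X$. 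Returning infinitely often to a fixed cross-section $\cS_0$ with uniformly expanding area contradicts the finiteness of total volume: a Poincaré-recurrence / pigeonhole packing argument on $\cS_0$ (or directly, $\int_{\bT^2}\det\Phi_{t}\,d\Leb = \Leb(\phi^X_t(\bT^2))=\Leb(\bT^2)$ is bounded, forcing $\int \cD_X(\mu)\,d\mu \le 0$ for some ergodic component, hence some ergodic $\mu$) rules out $\cD_X(\mu^X)>0$. Combining with Corollary~\ref{c.divformeasures} gives a non-positive exponent.

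The main obstacle is the last step: making the passage from ``total volume is preserved/bounded'' to ``some ergodic measure has non-positive divergence'' rigorous, because $\Leb$ itself is generally not $\phi^X$-invariant and need not decompose nicely over the ergodic measures of the flow. The clean route, which I would adopt, is to consider the time-$t_0$ diffeomorphism $f=\phi^X_{t_0}$, note $\int_{\bT^2}\ln\!\big|\det Df^n(x)\big|\,d\Leb(x)$ is $o(n)$ (in fact $\le 0$ up to a bounded error, since $f^n$ is a diffeomorphism of a fixed-volume manifold and $\int|\det Df^n|\,d\Leb=\Leb(\bT^2)$ forces $\int\ln|\det Df^n|\,d\Leb\le 0$ by Jensen), take an $f$-invariant measure $\mu$ as a weak-$*$ limit of $\frac1n\sum_{k=0}^{n-1}f^k_*\Leb$, apply the ergodic theorem and sub-additivity of $\log\det$ to get $\int \ln|\det Df|\,d\mu\le 0$, then pass to an ergodic component and reinterpret via Theorem~\ref{t.exponentsdiffandflow} and Corollary~\ref{c.divformeasures} as $\cD_X(\mu)=\lambda\le 0$ for an ergodic flow-measure $\mu$. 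This also seamlessly covers both alternatives above without casework, so in the final write-up I would likely present only this unified argument.
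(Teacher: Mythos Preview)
Two remarks, one on scope and one on a technical gap, followed by a comparison with the paper's argument.

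\textbf{Scope.} The lemma lives in the subsection on general surface flows $\phi^Y$, not in the Cherry-flow setting; its proof in the paper uses nothing about Cherry flows. Your opening casework invoking Proposition~\ref{p.numberofmeasures}, Theorem~\ref{t.ergodicmeasures}, and the return-time dichotomy at $\sigma^X$ is therefore off-target: those results are specific to Cherry flows and are not available for a general $Y$. You do say you would ultimately present only the unified argument, and that one \emph{is} general, so this is a matter of framing rather than substance; just drop the casework entirely.

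\textbf{A small gap in the unified argument.} Your route via Jensen is sound up to the point where you obtain an $f$-invariant measure $\mu$ as a weak-$*$ limit of $\tfrac1n\sum_{k=0}^{n-1} f^k_*\Leb$ with $\int\ln|\det Df|\,d\mu\le 0$. The issue is the next sentence: an $f$-ergodic component of $\mu$ need not be flow-invariant, so you cannot invoke Theorem~\ref{t.exponentsdiffandflow} or Corollary~\ref{c.divformeasures} directly. The fix is cheap: by Lemma~\ref{l.div}, $\ln|\det Df(x)|=\int_0^{t_0} div_Y(\phi^Y_s x)\,ds$, so setting $\tilde\mu=\tfrac{1}{t_0}\int_0^{t_0}(\phi^Y_s)_*\mu\,ds$ (which \emph{is} flow-invariant) one gets $t_0\,\cD_Y(\tilde\mu)=\int\ln|\det Df|\,d\mu\le 0$, and then ergodic decomposition for the \emph{flow} yields the desired ergodic measure. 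Alternatively, and more cleanly, run the whole argument with continuous-time averages $\tfrac1T\int_0^T(\phi^Y_t)_*\Leb\,dt$ from the start; then every weak-$*$ limit is already flow-invariant and the detour through $f$ disappears.

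\textbf{Comparison with the paper.} The paper argues by contradiction: if every ergodic measure had $\cD_Y>0$, then by compactness of $\cM_Y$ and continuity of $\cD_Y$ there is $a>0$ with $\cD_Y>2a$ on all of $\cM_Y$; since empirical measures $\tfrac1t\int_0^t\delta_{\phi^Y_s(x)}\,ds$ accumulate on $\cM_Y$, a positive-volume set of points eventually has $\det\Phi_n>e^{na}$, forcing $\Leb(\phi^Y_n(M_N))\to\infty$, a contradiction. Your Jensen/weak-$*$ route is a genuinely different packaging of the same volume obstruction: rather than blowing up volume on a good set, you use $\int\det\Phi_T\,d\Leb=\Leb(M)$ and concavity of $\log$ to bound the \emph{average} log-Jacobian from above by zero, then pass to a limit measure directly. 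Your approach is slightly more constructive (no contradiction) and avoids the auxiliary neighborhood $\cV$ of $\cM_Y$; the paper's approach avoids Jensen and the weak-$*$ limit manipulation. Both are short and elementary; with the patch above, yours is a valid alternative proof.
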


\begin{proof}

Suppose by contradiction that the divergence of every ergodic measure of $\phi^Y$ is positive. Denote by $\cM_Y$ the space of 
invariant measures of $\phi^Y$. Then by Ergodic Decomposition Theorem, there is $a>0$ such that $\cD_Y|_{\cM_Y}>2a>0$. 
Take $\cV$ a small neighborhood of $\cM_Y$ in the probability space over $M$, such that for any $\tilde{\mu} \in \cV$, 
$$\int div_Y(x)d \tilde{\mu}(x)> a.$$ 
Note that the measures contained in $\cV$ are not necessary to be invariant. It is well 
known that for any $x\in M$, there is $t_x>0$ such that $\frac{1}{t}\int_0^{t} \delta_{\phi^Y_s(x)}ds \in \cV$
for any $t\geq t_x$. A short proof is following:
Suppose this is false, then there are $t_n\to \infty$ such that 
$\mu_{n}=\frac{1}{t_n}\int_0^{t_n} \delta_{\phi^Y_s(x)}ds \notin \cV$. Because the probability space
over any compact manifold is compact, we can take a converging point $\mu_0$ of $\{\mu_n\}_{n\in \mathbb{N}}$. 
Then $\mu_0$ is an invariant measure but not contained in $\cV$, a contradiction.  

Let us continue the proof. For $n>0$, denote by $M_n=\{x; t_x\leq n\}$. Because $\bigcup_n M_n=M$, we can
choose $N$ sufficiently large, such that $\Leb(M_N)>0$. Observe that for any
$x\in M_N$ and $n>N$, 
$$\int div_Y(y) d \big(\frac{1}{n}\int_0^n \delta_{\phi^Y_s(x)}ds\big)(y)=\frac{1}{n}\int_0^n div_Y(\delta_{\phi^Y_s(x)}) ds>a.$$

Then by Lemma~\ref{l.div},
$$\det \Phi_n|_{T_xM}=\exp^{\int_0^n div_Y(\delta_{\phi^Y_s(x)})ds}>\exp^{na},$$
which implies that $\lim_n\Leb(\phi_n^Y(M_N))=\infty$, a contradiction to the fact that the volume of $M$ is bounded.

\end{proof}

\subsubsection{Star flow}
%\begin{definition}\label{d.hyperbolicorbit}

%Let $\Orb(p)$ be a periodic orbit of a 2-dimensional flow $Y$ with period $\tau(p)$. We say $\Orb(p)$
%is {\it hyperbolic} if for any $v\neq 0 \in \cN_p$, $\|\psi_{\tau(p)}(v)\|\neq \|v\|$.

%\end{definition}

Now let us give a general criterion for a two-dimensional flow $Y$ to be a star flow.

\begin{theorem}\label{t.criterionofstarflow}
Suppose all the singularities of $Y$ are hyperbolic, 
and all the ergodic measures of $\phi^Y$ have negative divergence, then $Y$ is a star flow.

\end{theorem}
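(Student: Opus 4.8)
The goal is to show that if $Y$ is a two-dimensional flow whose singularities are all hyperbolic and all of whose ergodic measures have negative divergence, then $Y$ is a star flow; that is, every $\C^1$-small perturbation $Z$ of $Y$ has only hyperbolic critical elements. The strategy is to argue by contradiction: suppose there is a sequence $Z_n \to Y$ in the $\C^1$ topology, each $Z_n$ carrying a non-hyperbolic critical element. Since singularities persist as hyperbolic ones under $\C^1$-perturbation (the singularities of $Y$ are hyperbolic, hence robustly so), the non-hyperbolic critical element must be a periodic orbit $\Orb(p_n)$ of $Z_n$. On a surface, the non-hyperbolicity of a periodic orbit means its Poincar\'e return map has derivative of modulus $1$; equivalently, by Remark~\ref{r.div}, the divergence of the periodic orbit satisfies $\cD_{Z_n}(\Orb(p_n)) = \ln\|\psi_{\tau_{Z_n}(p_n)}|_{\cN_{p_n}}\| = 0$.

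\textbf{Key steps.} First I would pass to invariant measures: let $\mu_n$ be the normalized arclength (equivalently, the unique invariant probability) supported on $\Orb(p_n)$, so that $\cD_{Z_n}(\mu_n) = \int \mathrm{div}_{Z_n}\, d\mu_n = 0$ by the computation above. Second, I would take a weak-$*$ limit $\mu_n \to \mu$ along a subsequence; the limit $\mu$ is an invariant probability measure of $\phi^Y$. Since the map $(Z,\nu)\mapsto \int \mathrm{div}_Z\, d\nu$ is continuous in the $\C^1$ topology on the vector field and the weak-$*$ topology on measures (the integrand $\mathrm{div}_{Z_n}$ converges uniformly to $\mathrm{div}_Y$ because $Z_n\to Y$ in $\C^1$, and $\mu_n\to\mu$ weak-$*$), we get $\cD_Y(\mu) = \int \mathrm{div}_Y\, d\mu = 0$. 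Third, by the Ergodic Decomposition Theorem, $\mu = \int \mu_\xi\, d\xi$ with each $\mu_\xi$ ergodic, and $0 = \cD_Y(\mu) = \int \cD_Y(\mu_\xi)\, d\xi$. By hypothesis every ergodic measure of $\phi^Y$ has \emph{negative} divergence — but here one must be careful that the decomposition could a priori involve measures supported on singularities; since the singularities of $Y$ are hyperbolic, on a surface a hyperbolic singularity of a flow has negative divergence only if it is a sink, and in general $\cD_Y(\delta_\sigma)$ is the trace of the linearization, which need not be negative. So one needs the hypothesis to be read as covering \emph{all} ergodic measures, atomic ones included; granting that, every $\cD_Y(\mu_\xi) < 0$, so the integral is strictly negative — unless $\mu$ is the zero measure, which is impossible since $\mu$ is a probability. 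This contradiction completes the proof.

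\textbf{Main obstacle.} The delicate point is the passage to the limit and the handling of the ergodic decomposition: one must ensure that the non-hyperbolic critical elements $\Orb(p_n)$ do not ``escape to infinity'' in a way that makes the limit measure degenerate — but on a compact surface $M$ the space of probability measures is weak-$*$ compact, so this is automatic, and the flow direction argument of Theorem~\ref{t.exponentsdiffandflow} is not even needed here. The genuinely substantive issue is whether the hypothesis ``all ergodic measures have negative divergence'' is strong enough to kill the case where $\mu_n$ converges to a combination of Dirac masses at the hyperbolic singularities: this is exactly why the statement requires \emph{all} singularities to be hyperbolic, so that for nearby $Z_n$ the only possible non-hyperbolic critical element is a periodic orbit, and why one reads the divergence hypothesis as applying to every ergodic invariant probability, so that the contradiction $0 = \int \cD_Y(\mu_\xi)\,d\xi < 0$ is clean. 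Finally, I would record that the same argument, applied not to $Y$ but directly to each perturbation, shows the perturbations inherit the negativity at the measure level, which is what one repackages in the later application (Lemma~\ref{l.nonnegative}).
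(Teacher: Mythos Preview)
Your argument is correct and follows essentially the same route as the paper: both proofs hinge on the compactness/upper semi-continuity of the space of invariant measures, the continuity of the divergence functional $(Z,\nu)\mapsto\int\mathrm{div}_Z\,d\nu$, and the Ergodic Decomposition Theorem to pass from ``all ergodic measures have negative divergence'' to ``all invariant measures have divergence bounded away from zero.'' The paper packages this directly---it shows $\cD_Y|_{\cM_Y}<a<0$ by compactness, then concludes that every periodic orbit of any nearby $Z$ is a \emph{sink} (not merely hyperbolic)---whereas you run the same mechanism as a proof by contradiction, extracting a weak-$*$ limit of the periodic measures $\mu_n$ with $\cD_{Z_n}(\mu_n)=0$. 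The two are equivalent reformulations; your version is fine, though the paper's direct form yields the slightly stronger conclusion (sinks, not just hyperbolic) that is used later in Theorem~\ref{main.C}.

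One small correction: your closing remark about ``repackaging'' this for Lemma~\ref{l.nonnegative} is off---that lemma is proved by entirely different means (Liao's scaled linear Poincar\'e flow and a shadowing argument), and the application of the present theorem in the paper is to Lemma~\ref{l.cherrystarflow}, not Lemma~\ref{l.nonnegative}.
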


\begin{example}~\label{ex.figure8}

Consider a 2-dimensional flow $Y_0$ on $\mathbb{R}^2$ with a saddle $\sigma$ of negative divergence, 
and each branch of the unstable manifold of $\sigma$ is connected to a branch of its stable manifold-- 
locally this set looks like a figure `8'. Denote by
$\Gamma$ the set consisting of $\sigma$ and its two saddle connections. It is easy to show that $\Gamma$
is an attractor: for each branch of unstable manifold, we take a transverse section near to $\sigma$ and analyze 
the Poincar\'{e} return maps, which are uniformly contracting. Taking a small contracting neighborhood $U$,
$\delta_\sigma$ is the unique invariant measure supported in $U$. Applying Theorem~\ref{t.criterionofstarflow}, 
$Y_0$ is a star flow in $U$.

\end{example}

\begin{proof}[Proof of Theorem~\ref{t.criterionofstarflow}:]
Because all the singularities of $Y$ are hyperbolic, $\phi^Y$ has only finitely many singularities. 
By the continuation of hyperbolic singularities, the singularities for flows in a small neighborhood are all hyperbolic. 
To prove this theorem, it suffices to show that the periodic orbits of nearby flows are all periodic sinks.

Write $\cM_{Y}$ the space of invariant measures of $\phi^Y$. By the assumption, for any ergodic measure
$\mu\in \cM_{Y}$, $\cD_Y(\mu)<0$. Then by Ergodic Decomposition Theorem, $\cD_Y(.)|_{\cM_{Y}}<0$. Because $\cM_{Y}$ is 
a compact space and $\cD_Y(.)$ is a continuous function, this implies that $\cD_Y(.)|_{\cM_{Y}}<a<0$ for some $a<0$.

For any vector field $Z\in \cU$, suppose $\Orb_Z(p)$ is a periodic orbit of $Z$. Denote $\mu_{\Orb_Z(p)}$ 
the $\phi^Z$ ergodic measure supported on $\Orb_Z(p)$. Then by the upper semi-continuation of the invariant 
measures space, for $\cU$ sufficiently small, $\mu_{\Orb_Z(p)}$ is close to $\cM_{Z}$ in the weak star topology. 
From the definition of divergence for measures,  
$$\cD_Z(\Orb_Z(p))=\cD_Z(\mu_{\Orb_Z(p)})<\frac{a}{2}<0.$$ 
By Remark~\ref{r.div}, $\Orb_Z(p)$ is a periodic sink. The proof is complete.

\end{proof}

As an immediately corollary of Theorem~\ref{t.ergodicmeasures} (a) and Theorem~\ref{t.criterionofstarflow}, we 
show that:

\begin{lemma}\label{l.cherrystarflow}

Suppose $\phi^X$ is a $\C^\infty$ Cherry flow and $\cD_X(\sigma_X)<0$. Then it is a star flow.

\end{lemma}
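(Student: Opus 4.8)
The plan is to derive Lemma~\ref{l.cherrystarflow} directly by combining the two results indicated just before its statement, namely Theorem~\ref{t.ergodicmeasures}(a) and Theorem~\ref{t.criterionofstarflow}. First I would observe that a $\C^\infty$ Cherry flow has, by definition, exactly two singularities: a source $\tilde\sigma^X$ and a hyperbolic saddle $\sigma^X$; both are hyperbolic, so the first hypothesis of Theorem~\ref{t.criterionofstarflow} is satisfied. Thus the only thing that needs checking is that every ergodic measure of $\phi^X$ has negative divergence.

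Next I would invoke Theorem~\ref{t.ergodicmeasures}(a): since $\phi^X$ is $\C^\infty$ and $\cD_X(\sigma^X)<0\le 0$, the only ergodic measures of $\phi^X$ are the two atomic measures $\delta_{\tilde\sigma^X}$ and $\delta_{\sigma^X}$. So it suffices to verify negativity of the divergence for just these two measures. For $\delta_{\sigma^X}$, Remark~\ref{r.div} identifies $\cD_X(\delta_{\sigma^X})$ with the classical divergence $\cD_X(\sigma^X)$ at the saddle, which is negative by hypothesis. For $\delta_{\tilde\sigma^X}$, the point $\tilde\sigma^X$ is a source, i.e.\ a repelling hyperbolic singularity; in dimension two both eigenvalues of the linearization have positive real part, and the divergence at a singularity is the trace of the linear part, hence the sum of the real parts of the eigenvalues, which is strictly positive. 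By Remark~\ref{r.div} again, $\cD_X(\delta_{\tilde\sigma^X})=\cD_X(\tilde\sigma^X)>0$.

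Here is the only genuine subtlety: the source contributes a \emph{positive}-divergence ergodic measure, so the hypothesis ``all ergodic measures have negative divergence'' in Theorem~\ref{t.criterionofstarflow} is literally false for $\phi^X$ on all of $\bT^2$. The resolution — and the one real step of the argument — is that one should apply the star-flow criterion not on $\bT^2$ but on the complement of a small isolating neighborhood of the source. More precisely, $\Lambda^X$ is a quasi-minimal attractor with basin $\bT^2\setminus\tilde\sigma^X$, so there is a compact trapping region $K$ with $\tilde\sigma^X\notin K$, $\phi^X_t(K)\subset \mathrm{int}(K)$ for $t>0$, and $\bigcap_{t>0}\phi^X_t(K)=\Lambda^X$; for a $\C^1$-small perturbation $Y$ the same $K$ remains trapping. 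On $K$ the only invariant measures of $\phi^X$ are those supported in $\Lambda^X$, and among the ergodic ones Theorem~\ref{t.ergodicmeasures}(a) leaves only $\delta_{\sigma^X}$, whose divergence is negative. Applying Theorem~\ref{t.criterionofstarflow} (relativized to the isolated invariant set in $K$) then shows every periodic orbit of a nearby flow inside $K$ is a sink; since every periodic orbit of a flow $\C^1$-close to $\phi^X$ must lie in $K$ (the remaining part of $\bT^2$ is a repelling neighborhood of the source, which persists and carries no recurrence), all critical elements of nearby flows are hyperbolic. Hence $\phi^X$ is a star flow.

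I expect the main obstacle to be exactly this localization issue: one must be careful that ``star flow'' is about \emph{all} $\C^1$-small perturbations, and a naive global application of Theorem~\ref{t.criterionofstarflow} fails because of the source. Making the trapping-region argument rigorous — in particular the persistence of the trapping region $K$ and the fact that no periodic orbit of a perturbation can escape toward the source — is the part that needs care, though it is standard hyperbolic-repeller/isolating-neighborhood material. Everything else (hyperbolicity of the two singularities, the sign of the divergence at a $2$-dimensional source, and the identification of atomic divergences via Remark~\ref{r.div}) is immediate from the cited statements.
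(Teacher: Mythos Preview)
Your proposal is correct and follows the same route the paper indicates: combine Theorem~\ref{t.ergodicmeasures}(a) with Theorem~\ref{t.criterionofstarflow}. The paper gives no proof beyond calling the lemma ``an immediate corollary'' of those two results, so your write-up is in fact more careful than the paper's.

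The subtlety you flag --- that $\delta_{\tilde\sigma^X}$ has \emph{positive} divergence, so the hypothesis of Theorem~\ref{t.criterionofstarflow} fails globally --- is genuine, and your localization fix is the right way to handle it. One can phrase it slightly more economically by going back to the proof of Theorem~\ref{t.criterionofstarflow} rather than restating the theorem on a subdomain: since $\tilde\sigma^X$ is a hyperbolic source, it has a repelling neighborhood $U$ that persists for all $Z$ $\C^1$-close to $X$, and no periodic orbit of such $Z$ can meet $U$. Hence the ergodic measure $\mu_{\Orb_Z(p)}$ of any periodic orbit gives zero mass to $U$, and upper semi-continuity of the invariant-measure space forces $\mu_{\Orb_Z(p)}$ to be weak-$*$ close to the invariant measures of $X$ supported on $\bT^2\setminus U$, namely to $\delta_{\sigma^X}$ alone. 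Therefore $\cD_Z(\Orb_Z(p))$ is close to $\cD_X(\sigma^X)<0$ and $\Orb_Z(p)$ is a sink. This is exactly your trapping-region argument, just run inside the existing proof rather than as a relativized restatement of the theorem.
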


We also need the following general description on ergodic measures for flows, whose proof depends on Liao Theory, and
is postponed to Section~\ref{s.nonnegative}. The main difficulty in the proof arises from the existence of singularities. 
A similar result for diffeomorphisms can be deduced by considering the corresponding suspension flows 
or using the $\C^1$ version of Pesin Theory for diffeomorphisms in~\cite{Y1}.

\begin{lemma}\label{l.nonnegative}

Suppose $\mu$ is a non-trivial ergodic measure of a $\C^1$ flow. Then $\mu$ has at least one non-negative Lyapunov 
exponent.

\end{lemma}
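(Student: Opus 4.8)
The plan is to argue by contradiction: assume $\mu$ is a non-trivial ergodic measure of the $\C^1$ flow $\phi^Y$ whose unique Lyapunov exponent $\lambda$ (for the linear Poincar\'e flow, as in Proposition~\ref{p.Oseledets}) is strictly negative, and derive a contradiction with the non-triviality of $\mu$. The heuristic is standard: a strictly negative normal exponent should force the support of $\mu$ to be an attracting periodic orbit or a sink, hence a critical element. The difficulty, flagged in the text, is that $\mu$ may give positive mass to a neighborhood of a singularity $\sigma \in \Sing(Y)$, and near $\sigma$ the linear Poincar\'e flow $\psi_t$ degenerates (the normal bundle $\cN_x$ has no good limit as $x \to \sigma$), so the naive graph-transform / stable-manifold argument breaks down. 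This is exactly where Liao's theory enters.

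First I would invoke Corollary~\ref{c.divformeasures} and Remark~\ref{r.measurevolume} to translate $\lambda < 0$ into the statement that along a $\mu$-generic orbit the infinitesimal area is contracted exponentially, and that the Poincar\'e return cocycle contracts the normal direction exponentially on average. Next, since $\mu$ is non-trivial and ergodic, $\mu(\Sing(Y)) = 0$, so a $\mu$-generic point $x$ spends, in the Birkhoff sense, a definite fraction of its time outside any fixed neighborhood of $\Sing(Y)$; moreover $\|Y\|$ is bounded away from $0$ along the portions of the orbit staying in the complement of such a neighborhood. The key step is then to use Liao's selecting/shadowing machinery for the linear Poincar\'e flow along such \emph{quasi-hyperbolic} strings: the uniform contraction of $\psi_t$ on $\cN$ along the pieces of orbit bounded away from the singularities, together with Liao's closing-type lemma, produces an actual $\psi$-contracting invariant set near the support of $\mu$, and forces the orbit of $x$ to be attracted to a critical element; combined with ergodicity this makes $\mu$ atomic on a sink or a periodic sink, contradicting non-triviality.

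In more concrete terms, the steps I would carry out are: (1) fix $t_0$ so that $f = \phi^Y_{t_0}$ is ergodic for $\mu$ (Theorem~\ref{t.exponentsdiffandflow}), reducing to a diffeomorphism with a single negative normal exponent and one zero exponent along the flow direction; (2) choose a small neighborhood $U$ of $\Sing(Y)$, let $K = M \setminus U$, and decompose the generic orbit into excursions; using $\mu(U)$ small and sub-additivity, show the return cocycle to a cross-section in $K$ is eventually uniformly contracting on $\cN$ (a Pliss-type argument to get a positive-density set of ``hyperbolic times'' for the Poincar\'e flow); (3) apply Liao's theory (the version used for star flows / singular flows) along these quasi-hyperbolic orbit segments to construct a local stable manifold or a contracting return map transverse to the flow at $\mu$-generic points; (4) conclude the $\omega$-limit set of a generic point is a periodic orbit (or a singularity if an excursion never returns), whose basin has positive $\mu$-measure, so by ergodicity $\mu$ is supported on it — contradicting that $\mu$ is non-trivial.

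The main obstacle, as the paper itself emphasizes, is step (3): controlling the dynamics of $\psi_t$ during the excursions into $U$, where $\psi_t$ is not well-behaved because $Y(x) \to 0$ and the normal bundle twists wildly. The resolution is precisely Liao's insight that one should not try to extend $\psi_t$ continuously across $\Sing(Y)$, but instead estimate it only along the ``good'' segments and use a selecting lemma to glue them; I would follow~\cite{L89} (and the exposition the author credits to Gan) to show that the average contraction on the good segments dominates whatever happens on the bad segments, since the bad segments carry little $\mu$-mass. A secondary technical point is making sure the one zero exponent in the flow direction does not obstruct the stable-manifold construction for $f$ — this is handled by working with the linear Poincar\'e flow (the normal exponent is the only relevant one) rather than with $Df$ directly, which is why the statement is phrased in terms of Lyapunov exponents of the \emph{flow}.
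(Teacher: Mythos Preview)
Your overall architecture---assume all normal exponents are negative, build quasi-contracting orbit segments via a Pliss/Pesin-block argument, and invoke Liao's closing for the linear Poincar\'e flow---matches the paper's. Two points, however, diverge from the paper, and one of them is a real gap.

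First, the paper's contradiction is not that $\mu$ turns out to be supported on a critical element. Instead, the paper uses Liao's shadowing (Theorem~\ref{t.shadowing}, proved via Lemma~\ref{l.goodset} and Theorem~\ref{t.liaoclosinglemma}) to close up \emph{infinitely many} recurrent pseudo-orbit arcs $\phi^Y_{[0,t_i]}(x)$ of a single generic $x\in\Lambda$, with return times $t_i$ chosen so that the resulting periodic sinks $p_i$ have pairwise distinct periods. Item~(d) of the shadowing theorem gives each $\Orb(p_i)$ a stable manifold of \emph{uniform} size at $p_i$, and since the $p_i$ all lie near $x\in\Lambda$ (away from $\Sing(Y)$) this yields infinitely many disjoint balls of fixed radius in the compact manifold $M$---a volume contradiction. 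This bypasses entirely the question of what $\omega(x)$ looks like. Your step~(4), by contrast, asserts that a local stable manifold (or contracting return map) at a $\mu$-generic $x$ forces $\omega(x)$ to be a periodic orbit; this inference is not valid as stated. A stable manifold at $x$ says nearby points track the orbit of $x$, not that the orbit of $x$ converges to anything periodic. One could try to repair this by arguing that the shadowing sink $p$ is so close to $x$ that $x\in W^s(\Orb(p))$, but you do not make that argument, and it would still need the uniform-size stable manifold of item~(d).

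Second, the technical device you are missing is Liao's \emph{scaled} linear Poincar\'e flow $\psi^*_t(v)=\psi_t(v)\,\|Y(x)\|/\|Y(\phi^Y_t(x))\|$. This cocycle is uniformly bounded on bounded time intervals even as the orbit passes near singularities (Lemma~\ref{l.twococycles}), and it has the same Lyapunov exponents as $\psi_t$. The Pesin block (Lemma~\ref{l.goodset}) and the quasi-contraction estimates are carried out for $\psi^*_t$, not for $\psi_t$; this is precisely what replaces your proposed ``estimate only on good segments and glue''. Without the rescaling, your step~(3) would indeed fail near $\Sing(Y)$ for the reason you yourself flag.
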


By considering again the inverse of the flow, we show that:

\begin{corollary}\label{c.nonhyperbolicity}

Every non-trivial ergodic measure of a two-dimensional flow is non-hyperbolic,
i.e., the Lyapunov exponent and the divergence of this measure are both vanishing.

\end{corollary}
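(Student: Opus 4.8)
The plan is to deduce the corollary from Lemma~\ref{l.nonnegative} by applying it both to $\phi^Y$ and to its time-reversal $\phi^{-Y}$. First I would observe that in dimension two the normal bundle $\cN_x$ of a regular point is one-dimensional, so by Proposition~\ref{p.Oseledets} a non-trivial ergodic measure $\mu$ has exactly one Lyapunov exponent $\lambda$ for the linear Poincar\'e flow, and $\cD_Y(\mu)=\lambda$ by Corollary~\ref{c.divformeasures}. Applying Lemma~\ref{l.nonnegative} directly to $\mu$ and $\phi^Y$ gives $\lambda\geq 0$.

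Next I would reverse time. The measure $\mu$ is still invariant and ergodic for the flow $\phi^{-Y}$ generated by $-Y$, and it is still non-trivial (the set of critical elements is unchanged under reversing the direction of the flow; singularities stay singularities and periodic orbits stay periodic orbits). Reversing time negates the linear Poincar\'e cocycle's exponent, so the Lyapunov exponent of $\mu$ for $\phi^{-Y}$ is $-\lambda$. Lemma~\ref{l.nonnegative}, now applied to $\mu$ as an ergodic measure of the $\C^1$ flow $\phi^{-Y}$, yields $-\lambda\geq 0$, i.e. $\lambda\leq 0$. Combining the two inequalities gives $\lambda=0$, so $\mu$ is non-hyperbolic; and then $\cD_Y(\mu)=\lambda=0$ by Corollary~\ref{c.divformeasures}, which is the second assertion. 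By Remark~\ref{r.measurevolume} (or Theorem~\ref{t.exponentsdiffandflow}) this also says $\lim_{t\to\infty}\frac1t\log\det\Phi_t|_{T_xM}=0$ for $\mu$-a.e.\ $x$, consistent with the divergence statement.

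The only point requiring a word of care — and the main (mild) obstacle — is checking that Lemma~\ref{l.nonnegative} is legitimately applicable to $\phi^{-Y}$: one must confirm that $-Y$ is still $\C^1$ (immediate) and that $\mu$ remains a \emph{non-trivial} ergodic measure for the reversed flow, which holds because $B(\mu)$, $\Sing$, transitivity of orbits, and the notion of periodic orbit are all symmetric under $t\mapsto -t$. Everything else is the routine one-dimensionality of $\cN_x$ on a surface together with the sign change of the exponent under time reversal, so no genuine new estimate is needed beyond Lemma~\ref{l.nonnegative} itself.
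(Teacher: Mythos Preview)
Your proposal is correct and follows exactly the approach the paper indicates: the paper's entire proof is the phrase ``By considering again the inverse of the flow,'' and you have spelled this out properly --- apply Lemma~\ref{l.nonnegative} to $\phi^Y$ to get $\lambda\ge 0$, apply it to $\phi^{-Y}$ to get $-\lambda\ge 0$, and invoke Corollary~\ref{c.divformeasures} for the divergence statement. The one-dimensionality of $\cN_x$ on a surface is the reason ``at least one non-negative exponent'' pins down the sign of the unique exponent, which you note correctly.
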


\subsection{Perturbation of flows}

We need the following flow version of Franks Lemma of \cite{BGV}[Theorem A.1] to modify
the linear Poincar\'{e} flow along a periodic orbit. Similar statement was obtained by Liao in 
\cite{L79b}[Proposition 3.4].

\begin{theorem}\label{t.Franks}

Given a $\C^1$ neighborhood $\cU$ of $Y$, there is a neighborhood $\cV\subset \cU$ of $Y$ and $\vep>0$ such that for 
any $Z\in \cV$, for any periodic orbit $\Orb_Z(p)$ of $Z$ with period $\tau(p)\geq 1$, any neighborhood $U$ of 
$\Orb_Z(p)$ and any partition of $[0,\tau_Z(p)]$:
$$0=t_0<t_1<\dots <t_l=\tau_Z(p), \;\;\; 1\leq t_{i+1}-t_i \leq 2, i=0,1,\dots, l-1,$$
and any linear isomorphisms $L_i:\cN_{\phi^Z_{t_i}(p)}\to \cN_{\phi^Z_{t_{i+1}}(p)}, i=0,1,\dots, l-1$
satisfying $\|L_i-\psi^Z_{t_{i+1}-t_i}|_{\cN_{\phi_{t_i}(p)}}\|\leq \vep$, there exists $\tilde{Z}\in \cU$ such that
$\psi^{\tilde{Z}}_{t_{i+1}-t_i}|_{\cN_{\phi_{t_i}(p)}}=L_i$ and $\tilde{Z}=Z$ on $(M\setminus U)\cup \Orb_Z(p)$.

\end{theorem}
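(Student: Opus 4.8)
The plan is to reduce Theorem~\ref{t.Franks} to a local realization statement along the orbit and then paste together disjointly supported local perturbations; this is the scheme of \cite{BGV,L79b}. Given the $\C^1$ neighborhood $\cU$, I would first fix an auxiliary small neighborhood of $Y$ in order to have uniform bounds, work with $Z$ inside it, and only at the end shrink to the final $\cV$ and choose $\vep$. For a periodic orbit $\Orb_Z(p)$ with a partition $0=t_0<\dots<t_l=\tau_Z(p)$, $1\le t_{i+1}-t_i\le2$, for each $i$ I would pick a closed subinterval $J_i\subset(t_i,t_{i+1})$ whose length is bounded above and below (possible since $t_{i+1}-t_i\ge1$), so that the subsegments $\Gamma_i=\{\phi^Z_s(p):s\in J_i\}$ are pairwise disjoint. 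Since $\Orb_Z(p)$ contains no singularity, I would take flow-box coordinates straightening $Z$ on a tubular neighborhood $V_i\cong J_i\times D_r$ of $\Gamma_i$, in which $Z=\partial/\partial s$ and $\Orb_Z(p)\cap V_i=\{x=0\}$; for $r$ small the $\overline{V_i}$ are pairwise disjoint and contained in $U$. Choosing the sections over the partition points orthogonal to $Z$, the projection along $Z$ onto them coincides with the orthogonal projection onto $\cN$, so $\psi^Z_{t_{i+1}-t_i}|_{\cN_{\phi_{t_i}(p)}}$ is identified with the derivative at $p$ of the Poincar\'e transition between those sections; it then suffices to build, for each $i$, a vector field $W_i$ supported in $V_i$, vanishing on $\Orb_Z(p)$, with $\|W_i\|_{\C^1}\le C\|L_i-\psi^Z_{t_{i+1}-t_i}|_{\cN_{\phi_{t_i}(p)}}\|$ for a uniform $C$, such that the linear Poincar\'e flow of $Z+W_i$ over $[t_i,t_{i+1}]$ at $\phi^Z_{t_i}(p)$ equals $L_i$.

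For the local construction I would factor the $Z$-transition over $[t_i,t_{i+1}]$ through the two end sections of the flow box as $A_2\,A_1$, where $A_1,A_2$ are fixed, uniformly bounded isomorphisms with $A_2A_1=\psi^Z_{t_{i+1}-t_i}|_{\cN_{\phi_{t_i}(p)}}$ (the middle transition being the identity in the straightened coordinates). Set $B=\log(A_2^{-1}L_iA_1^{-1})$, which is well defined and of norm $\le C\|L_i-\psi^Z_{t_{i+1}-t_i}|_{\cN_{\phi_{t_i}(p)}}\|$ once $\vep$ is small, since $A_2^{-1}L_iA_1^{-1}$ is then close to the identity; and take, in the flow-box coordinates,
$$W_i(s,x)=\rho(s)\,\chi(\|x\|)\,Bx,$$
with $\rho\ge0$ supported in the interior of $J_i$, $\int\rho=1$, $\|\rho\|_{\C^1}$ bounded, and $\chi$ a cutoff equal to $1$ near $0$ and to $0$ outside $D_r$. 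Pulled back to $M$, $W_i$ is supported in $V_i$, vanishes on $\Orb_Z(p)$, and has no $\partial/\partial s$ component in the straightened coordinates, so $\{x=0\}$ stays an orbit of $Z+W_i$ with unchanged speed and $\Orb_Z(p)$ remains periodic with period $\tau_Z(p)$. Integrating $\dot x=\rho(s)Bx$ across $J_i$ (where $\chi\equiv1$) shows the new transition through $V_i$ is post-composed at the linear level with $\exp(B)$, hence the new linear Poincar\'e flow over $[t_i,t_{i+1}]$ is $A_2\exp(B)A_1=L_i$.

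For the assembly I would set $\tilde Z=Z+\sum_{i=0}^{l-1}W_i$: the perturbation supports are pairwise disjoint, so $\|\tilde Z-Z\|_{\C^1}=\max_i\|W_i\|_{\C^1}\le C\vep$ and $\tilde Z=Z$ on $(M\setminus U)\cup\Orb_Z(p)$; by disjointness the linear Poincar\'e flow of $\tilde Z$ over each $[t_i,t_{i+1}]$ is produced by $W_i$ alone and therefore equals $L_i$. Choosing $\vep$ small and then $\cV\subset\cU$ small enough that all such $\tilde Z$ lie in $\cU$ completes the argument.

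The main obstacle — the actual content of Franks' lemma — is the uniform $\C^1$ estimate on $W_i$, which must not deteriorate as $r\to0$, while $r$ is forced to be small in order to keep the boxes disjoint and inside an arbitrarily thin $U$. The key cancellation is that in the transverse directions $\|D_x(\chi(\|x\|)Bx)\|\le\|B\|+\|D\chi\|\,\|B\|\,\|x\|$, and on $\supp D\chi$ one has $\|x\|\le2r$ while $\|D\chi\|\le C/r$, so the product stays $\le C'\|B\|$ independently of $r$; the flow-direction derivative $\rho'(s)\chi(\|x\|)Bx$ has norm $\le C\|B\|\,r$, and transporting to $M$ only multiplies by a distortion constant depending on $Y$, uniform by compactness. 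Two secondary points I would be careful about: straightening $Z$ makes the ``$\exp(B)$'' exact, so solving $\exp(B)=A_2^{-1}L_iA_1^{-1}$ is an honest logarithm rather than an implicit-function argument; and the lower bound $t_{i+1}-t_i\ge1$ is precisely what provides the room in the flow direction to place the $J_i$ disjointly with $\|\rho\|_{\C^1}$ uniformly bounded, so that no accumulation over the $l$ segments occurs.
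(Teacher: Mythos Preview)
The paper does not prove Theorem~\ref{t.Franks}; it simply quotes the statement from \cite{BGV}[Theorem~A.1] (with a related version in \cite{L79b}[Proposition~3.4]) and uses it as a black box. Your proposal is exactly the construction carried out in \cite{BGV}: flow-box coordinates on short disjoint subarcs $J_i\subset(t_i,t_{i+1})$, a linear correction $W_i(s,x)=\rho(s)\chi(\|x\|)Bx$ with $B=\log(A_2^{-1}L_iA_1^{-1})$, and the $\C^1$ estimate obtained from the cancellation $\|D\chi\|\cdot\|x\|\le C$ on $\supp D\chi$. The argument is sound; the only place your wording is slightly loose is the identification of the flow-box sections with the orthogonal normal spaces $\cN$ --- in the straightened coordinates the constant-$s$ slices need not be orthogonal to $Z$, but projection along $Z$ conjugates the Poincar\'e transition between any transverse sections with $\psi^Z$, which is what you actually use when you write $A_2\exp(B)A_1=L_i$.
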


\begin{remark}\label{r.Franks}
In fact, for any fixed $k\in \mathbb{N}$, one can choose $\cV$ smaller, such that for any $Z\in \cV$, and $p_1,p_2,\dots, p_k$
periodic orbits of $Z$, we can perturb this flow in the above manner near to all the $k$ periodic orbits simultaneously.

\end{remark}

We also need the Closing Lemma:

\begin{theorem}\label{t.closing}

Suppose $x\in M$ is regular point of flow $Y$ and $\omega(x)$ contains regular points. 
Then for any $\C^1$ neighborhood $\cU$ of $Y$ and any neighborhood $U$ of 
$\omega(x)$, there is a  flow $Z\in \cU$ such that $Z=Y|_{M\setminus U}$ and $Z$ has 
a periodic orbit contained in $U$.

\end{theorem}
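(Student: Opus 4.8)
The plan is to realise this as an instance of Pugh's $\C^1$ Closing Lemma, carried out in a flowbox around a recurrent regular point; only the $\C^1$-smallness of the perturbation is delicate, everything else being bookkeeping.

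\emph{Setup.} Fix a regular point $p\in\omega(x)$. Since $p\notin\Sing(Y)$ we may choose flowbox coordinates on a small neighborhood $N$ of $p$ with $\overline N\subset U$ and $\overline N\cap\Sing(Y)=\emptyset$: a diffeomorphism of $N$ onto $D\times(-1,1)$, $D\subset\RR$ a small interval, carrying $Y$ to the constant vertical field and $\Sigma:=D\times\{0\}$ to a local cross-section through $p$. As $\omega(x)$ is the $\omega$-limit set of the forward orbit of $x$ and $\omega(x)\subset U$ with $U$ open, there is $T_0$ with $\phi^Y_t(x)\in U$ for all $t\geq T_0$. Since $p\in\omega(x)$ is regular, the forward orbit of $x$ meets $\Sigma$ at an increasing sequence of times $t_0<t_1<\dots$, with return points $q_k:=\phi^Y_{t_k}(x)\in\Sigma$ and a subsequence $q_{k_i}\to p$; moreover, because $M$ is an orientable surface, the first-return map to $\Sigma$ is, where defined, an orientation-preserving map of the interval $D$, which will be convenient for tracking the returns.

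\emph{Choice of the returns and the target perturbation.} Fix a small $\rho>0$. Discarding finitely many returns we may assume $t_0\geq T_0$, so that the whole orbit arc of $x$ after time $t_0$ lies in $U$. By the convergence $q_{k_i}\to p$ there are indices $a<b$ with $\|q_a-p\|<\rho/2$ and $\|q_b-p\|<\rho/2$, hence $\|q_a-q_b\|<\rho$. We then perturb $Y$ to a field $Z$, keeping the support inside a thin tubular neighborhood of the arc $\{\phi^Y_t(x):t_a\le t\le t_b\}$ (which, being contained in the open set $U$, can be arranged to lie in $U$), so that the $Z$-orbit through $q_a$ is deflected to return exactly to $q_a$ after time $\approx t_b-t_a$. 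This deflected orbit is periodic, $C^0$-close to the said arc, and therefore contained in $U$, while $Z=Y$ off the support, in particular on $M\setminus U$.

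\emph{Main obstacle.} The delicate point is to make this perturbation $\C^1$-small. A single deflection that displaces a point by $\sim\rho$ across a flowbox of width $w$ costs a $\C^1$-perturbation of order $\rho/w$, which need not be small. Pugh's resolution -- the technical heart of the Closing Lemma -- is a ``spreading'' (or ``lifting'') argument: one selects the returns $q_a,q_b$ so that the displacement can be distributed over a large number of pairwise disjoint flowbox tiles strung along a long orbit arc, performing only a tiny fraction of the total displacement in each tile, so that every individual perturbation is $\C^1$-small while their composition achieves the required total deflection. The combinatorial care needed is precisely to guarantee (i) that the chosen tiles are disjoint, so the local perturbations do not interfere, and (ii) that the \emph{other} passages of the orbit of $x$ through the support region (the intermediate returns $q_c$ with $a<c<b$, and the later ones) do not destroy the closed orbit. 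Controlling this interference, via Pugh's fundamental-domain construction, is the step that requires genuine work; once it is done, shrinking $\rho$ and the tube places the resulting periodic orbit inside $U$ and keeps $Z$ inside the prescribed $\C^1$-neighborhood $\cU$ of $Y$. We refer to Pugh's original construction and its flow version for the remaining details.
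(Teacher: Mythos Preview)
The paper does not actually prove this theorem: it is stated as the well-known Closing Lemma (``We also need the Closing Lemma:'') and used as a black box, with no argument given. Your sketch is the standard Pugh approach and is correct in outline; you have identified both the statement's provenance and the genuine technical core (spreading the deflection over many disjoint flowbox tiles to keep the perturbation $\C^1$-small). One minor caveat: you invoke ``because $M$ is an orientable surface'' to control the first-return map, but the theorem as stated in the paper is for a general manifold $M$; this is harmless in the paper's context ($M=\bT^2$), but the Closing Lemma itself does not need that hypothesis. Since the paper offers no proof to compare against, there is nothing further to contrast.
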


A deep result of Liao~\cite{L89} shows that the number of periodic sinks is uniformly bounded in a small 
neighborhood of a star flow. The following version of Liao's result which can be obtained directly from the original 
proof, is the motivation of our Theorem~\ref{main.C}. Since it will not be used in this article, we do not 
provide a proof.

For a flow $Y$, denote by $\Sing_1(Y)$ the set of hyperbolic singularities which are contained in non-trivial 
chain recurrent classes, and whose tangent bundle admits a codimension-1 dominated splitting $E^{cs}\oplus E^u_1$
where $E^u_1$ is a one-dimensional unstable bundle. Note that every 
$\sigma\in \Sing_1(Y)$ admits a one-dimensional strong unstable manifold. We also write $S(Y)$ 
the number of periodic sinks of $\phi^Y$ and $K(Y)=\#(\Sing_1(Y))$.

\begin{theorem}\label{t.Liao}
Suppose $Y$ is a star flow. Then $S(Y)$ is finite, and there is a $\C^1$ neighborhood $\cU$ of $Y$, such that for any
$Z\in \cU$, 
$$S(Z)\leq S(Y)+2K(Y).$$ 
Moreover, the basin of every `new' periodic sink of $Z$ intersects one branch of the 
one-dimensional strong unstable manifold of a singularity of $Z$ which is the analytic continuation of a singularity 
$\sigma\in \Sing_1(Y)$.

\end{theorem}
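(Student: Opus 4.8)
The plan is to follow Liao's analysis in~\cite{L89}, reorganized around the question of where the \emph{new} periodic sinks of a nearby flow $Z$ can come from. The basic input is the uniform hyperbolicity of periodic orbits of a star flow: there are a $\C^1$ neighborhood $\cU_0$ of $Y$ and constants $C>0$, $\lambda\in(0,1)$, $T_0>0$ such that for every $Z\in\cU_0$ and every periodic orbit $\Orb_Z(p)$ of period $\tau\ge T_0$ the linear Poincar\'{e} flow $\psi^Z$ along $\Orb_Z(p)$ admits a dominated splitting $\cN=\cN^{cs}\oplus\cN^{u}$ with uniform $(C,\lambda)$ contraction on $\cN^{cs}$ and expansion on $\cN^{u}$; the orbit is a sink precisely when $\cN^{u}$ is trivial. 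Using this together with the analytic continuation of hyperbolic critical elements, I would first reduce matters to the following dichotomy for a new sink $\Orb_Z(p)$ of a flow $Z$ close to $Y$: letting $\Gamma$ be a Hausdorff limit of $\Orb_Z(p)$ as $Z\to Y$ (a compact, connected, $\phi^Y$-invariant set), either (i) $\Gamma$ contains no singularity, in which case the uniform estimates force the linear Poincar\'{e} flow of $Y$ over $\Gamma$ to be uniformly contracting with trivial unstable part, so $\Gamma$ is a single hyperbolic periodic sink of $Y$ and $\Orb_Z(p)$ is its analytic continuation, contradicting that $p$ is new; or (ii) $\Gamma$ contains a singularity $\sigma$ of $Y$.

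Assume (ii). Since $\sigma$ is accumulated by the orbit of $p$ it lies in a non-trivial chain recurrent class; and inspecting the linear Poincar\'{e} flow over the long excursions of $\Orb_Z(p)$ past $\sigma^Z$ (using the behaviour of the linear Poincar\'{e} flow along a long passage near a hyperbolic singularity) shows that $\sigma$ must carry a codimension-one dominated splitting $E^{cs}\oplus E^u_1$ with $\dim E^u_1=1$: a larger unstable bundle, or the absence of such domination, would leave an expanding direction in the limit, incompatible with $\cN^{u}$ being trivial. Thus $\sigma$ is the continuation of a singularity in $\Sing_1(Y)$, with a one-dimensional strong unstable manifold $W^{uu}(\sigma^Z)$ having exactly two branches. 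Next I would localize near one branch: on a small cross-section $S$ to $\phi^Z$ transverse to that branch near $\sigma^Z$, the first-return map is defined only on a thin tube of points that shadow $W^s(\sigma^Z)$ into a long hyperbolic passage and exit along the branch, and the domination $E^{cs}\oplus E^u_1$ together with the star condition make this return map a uniform contraction on its domain once the passage is long enough, hence with at most one fixed point. So each of the two branches of $W^{uu}(\sigma^Z)$ accounts for at most one new sink, and since $W^{uu}(\sigma^Z)\cap S$ lies in the core of that tube and its forward orbit returns to the tube (which is exactly what allowed a sink to form), the branch meets the basin of the sink. Running the same localization for $Y$ itself bounds the sinks of $Y$ accumulating on any $\sigma\in\Sing_1(Y)$, so $S(Y)<\infty$; summing over the finitely many singularities in $\Sing_1(Y)$ gives at most $2K(Y)$ new sinks, i.e.\ $S(Z)\le S(Y)+2K(Y)$, together with the stated property of their basins.

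The step I expect to be the main obstacle is the singular block in (ii): the linear Poincar\'{e} flow degenerates as an orbit approaches $\sigma$ (the flow direction tends to $0$ there), so neither the uniform hyperbolicity of periodic orbits nor the ``at most one sink per branch'' estimate can be read off directly, and the passage from the linearized saddle to the actual flow must be controlled. This is exactly where Liao's machinery enters: the normalized (scaled) linear Poincar\'{e} flow, the selecting/sifting lemmas that bound the proportion of time $\Orb_Z(p)$ spends near $\sigma$, and the transfer of the hyperbolicity estimates across the singular passage. Making all of this quantitative and uniform for $Z$ in a fixed neighborhood $\cU$ of $Y$, so that the bound $2K(Y)$ is genuinely independent of $Z$, is the technical heart of~\cite{L89}.
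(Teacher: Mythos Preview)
The paper does not prove Theorem~\ref{t.Liao}. Immediately before stating it, the author writes: ``The following version of Liao's result which can be obtained directly from the original proof, is the motivation of our Theorem~\ref{main.C}. Since it will not be used in this article, we do not provide a proof.'' So there is nothing in the paper to compare your proposal against; the theorem is quoted from~\cite{L89} as background.

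That said, your outline is a faithful sketch of the strategy in Liao's work and is consistent with how the paper itself uses Liao's ideas elsewhere (the scaled linear Poincar\'e flow $\psi^*_t$, Liao's shadowing for $(\eta,T_0)^*$-quasi contracting arcs, and the uniform hyperbolicity of periodic orbits under the star condition). Your identification of the ``singular block'' as the crux---where the linear Poincar\'e flow degenerates and one must switch to the scaled flow and to Liao's selecting/sifting estimates to control the passage near $\sigma$---is exactly right, and matches the role these tools play in Section~\ref{s.nonnegative} of the paper. If you want to turn this into a self-contained proof rather than an outline, the substantive work lies in that block: making the ``at most one sink per branch of $W^{uu}(\sigma^Z)$'' argument uniform in $Z$ requires the quantitative estimates of~\cite{L89}, not just the qualitative dichotomy you describe.
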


\begin{example}
Applying the above theorem on the star flow $Y_0$ in Example~\ref{ex.figure8}, there is an open neighborhood $\cU$ of $Y_0$, 
such that any flow $Z\in\cU$ has at most two periodic orbits/sinks, and a saddle connection is broken when one periodic sink
appears.
\end{example}
%\subsection{Shadowing lemma for flows}

%\begin{theorem}\label{t.shadowing}
%Let $\Lambda$ be a compact invariant set of flow $\phi_t$ and admit a dominated splitting
%$E\oplus F$. Suppose $\mu$ is a non-trivial hyperbolic ergodic measure supported on $\Lambda$ with index $dim(E)$ 
%and $\tilde{\mu}$ an ergodic decomposition of $\mu$ respect to $f$. Then there is a compact, positive $\tilde{\mu}$ measure subset 
%$\Lambda_0\subset \Lambda\setminus \Sing(X)$ which satisfies the following properties: 
%for any $\vep>0$, there are $L,L^{'}, \delta>0$ such that for any $x$ and $f^n(x)\in \Lambda_0$ 
%with $n>L^{'}$ and $d(x,f^n(x))<\delta$, there exists a point $p\in M^d$ and a $\C^1$ strictly 
%increasing function $\theta:[0,T] \rightarrow \mathbb{R}$ such that:
%\begin{itemize}

%\item[(a)] $\theta(0)=0$ and $1-\vep <\theta^{'}(t)<1+\vep$;

%\item[(b)] $p$ is a periodic point of $\phi_t$ with $\phi_{\theta(T)}(p)=p$;

%\item[(c)] $d(\phi_t(x),\phi_{\theta(t)}(p))\leq \vep\|X(\phi_t(x))\|, t\in [0,T]$;

%\item[(d)] $d(\phi_t(x),\phi_t(p))\leq L d(x, \phi_T(x))$

%\item[(e)] $p$ has uniform size of stable manifold and unstable manifold.

%\end{itemize}
%\end{theorem}

\section{Proof of Lemma~\ref{l.nonnegative}\label{s.nonnegative}}
Throughout this section we suppose $\phi^Y$ is a flow generated by vector field $Y$ over manifold $M^d$, $\mu$ is a
non-trivial ergodic measure of $\phi^Y$ with all Lyapunov exponents negative. We prove Lemma~\ref{l.nonnegative} 
by showing the contradiction. In fact, we make use of scaled linear Poincar\'{e} flow in Liao Theory to guarantee that there are infinite 
number of distinct periodic sinks, and each basin contains a uniform size of ball, which is a contradiction.

We need the following version of~\cite{Y1}[Theorem 5.1], where the `$F$ bundle' of the original statement is taken to be 
empty here. For completeness, we give a proof in Subsection~\ref{ss.shadowing}.
%For completeness, we provide a proof in the case of 2-dimensional flow in Subsection~\ref{ss.shadowing}.

\begin{theorem}\label{t.shadowing}
There is a compact, positive $\mu$ measure subset $\Lambda\subset \supp(\mu)\setminus \Sing(Y)$ 
satisfying: 
for any $\vep>0$, there are $L,L^{'}, \delta>0$ such that for any $x$ and $\phi^Y_T(x)\in \Lambda$ 
with $T>L^{'}$ and $d(x,\phi^Y_T(x))<\delta$, there exists a point $p\in M$ and a $\C^1$ strictly 
increasing function $\theta:[0,T] \rightarrow \mathbb{R}$ such that:
\begin{itemize}

\item[(a)] $\theta(0)=0$ and $1-\vep <\theta^{'}(t)<1+\vep$;

\item[(b)] $p$ is a periodic sink of $\phi^Y$ with period $\tau(p)=\theta(T)$;

\item[(c)] $d(\phi^Y(x),\phi^Y_{\theta(t)}(p))\leq \vep\|Y(\phi^Y_t(x))\|, t\in [0,T]$;

\item[(d)] $\Orb_Y(p)$ has uniform size of stable manifold at $p$.

\end{itemize}
\end{theorem}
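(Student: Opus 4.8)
The plan is to run Liao's closing argument through the \emph{scaled linear Poincar\'e flow}: for a regular point $x$ and $v\in\cN_x$ put $\psi^*_t(v)=\frac{\|Y(x)\|}{\|Y(\phi^Y_t(x))\|}\,\psi_t(v)$. The point of the scaling is that $\psi^*$ is insensitive to the slowing-down of the flow near $\Sing(Y)$, so the hypothesis that $\mu$ is non-trivial with all Lyapunov exponents negative can be upgraded to a \emph{uniform} contraction estimate for $\psi^*$ along exactly the orbit segments that the statement speaks about. The proof then has two halves: a measure-theoretic selection of the set $\Lambda$ together with a supply of ``hyperbolic return times'', and a contraction-mapping (graph-transform) closing argument turning a $\psi^*$-contracting segment into a genuine periodic sink. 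This is the content of \cite{Y1}[Theorem~5.1] with empty $F$-bundle.

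\emph{Step 1: choosing $\Lambda$.} Since $\mu$ is non-trivial it gives $\Sing(Y)$ zero mass, so $\log\|Y\|$ is finite on a full-measure set and the common exponent $\lambda$ of Proposition~\ref{p.Oseledets} is, by hypothesis, negative; fix $a>0$ with $\lambda<-3a<0$. Applying Birkhoff's ergodic theorem to $t\mapsto\log\|\psi^*_1|_{\cN_{\phi^Y_t(x)}}\|$, a Pliss-type (Liao selecting) lemma, and a Lusin-type argument so that the normal bundle and the relevant norms vary continuously, I would produce a compact set $\Lambda\subset\supp(\mu)\setminus\Sing(Y)$ with $\mu(\Lambda)>0$ and constants $L,L'>0$ such that: whenever $x,\phi^Y_T(x)\in\Lambda$ with $T>L'$, the scaled flow $\psi^*$ is uniformly $e^{-at}$-contracting along $[0,T]$ and along every terminal subsegment $[s,T]$. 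Continuity of the bundle over the compact set $\Lambda$ also yields local transversals $\Sigma_y$, $y\in\Lambda$, of a uniform size $r_0$.

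\emph{Step 2: closing.} Fix $\varepsilon>0$ and take $\delta$ small relative to $r_0$, to $a$, and to a modulus of continuity of $Y$. Given $x,\phi^Y_T(x)\in\Lambda$, $T>L'$, $d(x,\phi^Y_T(x))<\delta$, the Poincar\'e map $P$ of the segment carries a neighborhood of $x$ in $\Sigma_x$ into $\Sigma_{\phi^Y_T(x)}$, and the latter is $\C^1$-close to $\Sigma_x$ because the endpoints are $\delta$-close inside the compact set $\Lambda$; up to the scalar $\|Y(\phi^Y_T(x))\|/\|Y(x)\|$ (close to $1$ since both endpoints lie in $\Lambda$), $DP(x)$ is conjugate to $\psi^*_T|_{\cN_x}$, hence an $e^{-aT}$-contraction, so $P$ maps a ball of uniform radius $\rho$ in $\Sigma_x$ into itself. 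The contraction-mapping principle gives a fixed point, i.e.\ a periodic orbit $\Orb_Y(p)$; uniform contraction of $\psi^*$ along that orbit combined with Remark~\ref{r.div} gives $\cD_Y(\Orb_Y(p))<0$, so $p$ is a sink whose stable-manifold size depends only on $a$ and $\Lambda$ --- this is (b) and (d). Comparing the true flow times between consecutive transversals with those of $\Orb_Y(p)$ produces the $\C^1$ increasing reparametrization $\theta$ with $\theta(0)=0$, $\theta(T)=\tau(p)$ and $|\theta'-1|<\varepsilon$ (shrink $\delta$); unwinding the $\psi^*$-estimate back to the unscaled picture gives the shadowing bound (c), the weight $\|Y(\phi^Y_t(x))\|$ on the right being exactly the artefact of the scaling.

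\emph{Main obstacle.} The delicate point is that the orbit segment from $x$ to $\phi^Y_T(x)$ may pass arbitrarily close to a singularity, where $\|Y\|\to 0$ and the transversals and Poincar\'e maps degenerate; this forces the whole argument to be carried out for $\psi^*$ rather than $\psi$. The heart of the proof is therefore the \emph{uniform distortion estimate near $\Sing(Y)$}: a passage near a hyperbolic singularity contributes only a bounded multiplicative error to $\psi^*$, uniformly over all segments with endpoints in $\Lambda$. This is precisely where Liao's machinery (the scaled linear Poincar\'e flow and the selecting lemma) does the real work; once it is in hand, the rest is the routine closing argument of Steps~1--2.
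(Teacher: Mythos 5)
Your overall architecture is the same as the paper's: pass to the scaled linear Poincar\'e flow $\psi^*_t$, use Birkhoff/subadditive averages plus a Pliss--Liao selection to extract a compact positive-measure set $\Lambda$ on which orbit segments with both endpoints in $\Lambda$ are quasi-contracting for $\psi^*$, and then close up by a Liao-type shadowing argument; you even cite the correct source (\cite{Y1}, Theorem~5.1 with empty $F$-bundle). Step~1 is essentially the paper's Lemma~\ref{l.goodset} (the paper works with a partition $0=t_0<\dots<t_l=T$ with $t_{i+1}-t_i\in[T_0,2T_0]$ and the Pliss condition $\prod_{i<k}\|\psi^*_{t_{i+1}-t_i}\|\le\lambda^k$ on initial, not terminal, subsegments, but that is a cosmetic difference).

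The genuine gap is in your Step~2. You apply the contraction mapping principle to the single Poincar\'e return map $P:\Sigma_x\to\Sigma_{\phi^Y_T(x)}$ over the whole segment $[0,T]$, arguing that since $DP(x)$ is an $e^{-aT}$-contraction, $P$ maps a ball of uniform radius $\rho$ into itself. A bound on $DP$ at the single point $x$ gives no control of $P$ on a macroscopic ball: the domain of definition of the full holonomy and the modulus of continuity of $DP$ degenerate precisely when the orbit passes near $\Sing(Y)$, because the intermediate cross-sections only have size comparable to $\|Y\|$, which can be arbitrarily small. This cannot be repaired by a ``bounded multiplicative distortion near singularities'' for $\psi^*$ --- the boundedness of $\psi^*_t$ (Lemma~\ref{l.twococycles}) is only per bounded time, and a passage near a singularity takes unbounded time. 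The paper's Theorem~\ref{t.liaoclosinglemma} resolves this by never forming the composed map: it cuts $[0,T]$ at the Pliss times into pieces of time length in $[T_0,2T_0]$, replaces each holonomy by its rescaled version $\cP^*_{x,\phi^Y_t(x)}(a)=\cP_{x,\phi^Y_t(x)}(a\|Y(x)\|)/\|Y(\phi^Y_t(x))\|$, which is defined on a ball of \emph{uniform} radius $\beta$ and has uniformly continuous derivative, and then applies Gan's generalized shadowing lemma (\cite{G}) to the resulting finite sequence of maps under the quasi-contraction condition; the periodic point is produced for the sequence. A second, smaller omission is item (d): quasi-contraction only bounds the \emph{products} $\prod_{i\le k}\|\cT_i\|\le\lambda^k$, not each factor, so a uniform stable manifold does not follow directly from ``uniform contraction''; the paper needs the well-adapted sequence $\{c_i\}$ to renormalize the $\cT_i$ into genuinely step-by-step contracting Lipschitz maps $\tilde{\cT}_i$ with $g_l=1$, and only then does the standard shadowing lemma give a stable manifold of uniform size at $p$.
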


\begin{proof}[Proof of Lemma~\ref{l.nonnegative}:]
Fix $\Lambda$, $\vep, \delta, L, L^{'}>0$ as in Theorem~\ref{t.shadowing}. By Birkhoff Ergodic Theorem,
there is $x\in \Lambda$ and $L^{'}<t_1<t_2<\dots$ such that
\begin{itemize}
\item[(i)] $\phi^Y_{t_i}(x)\in \Lambda$ for any $i>0$ and $\phi^Y_{t_i}(x)\to x$;

\item[(ii)] $t_{i+1}(1-\vep)>t_i(1+\vep)$.
\end{itemize}

Applying Theorem~\ref{t.shadowing}, each pseudo orbit $\big(\phi_t^Y(x)\big)_{t\in [0,t_i]}$ is 
shadowed by a periodic sink $p_i$ with period $(1-\vep)t_i<\tau(p_i)<(1+\vep)t_i$. 

By (ii) above, all the periods are different, which implies that all these periodic sinks are distinct. By (d) 
of Theorem~\ref{t.shadowing}, each periodic sink $\Orb(p_i)$ has uniform size of stable manifold at $p_i$ 
for every $i$, which is a contradiction.

\end{proof}

\subsection{Proof of Theorem~\ref{t.shadowing} \label{ss.shadowing}}

In this subsection, we provide a proof of Theorem~\ref{t.shadowing}. In the proof we need another flow $\psi^*_t: \cN_M \rightarrow \cN_M$, which 
is called {\it scaled linear Poincar\'e flow}:
$$\psi^*_t(v)=\frac{\|Y(x)\|}{\|Y(\phi_t(x))\|}\psi_t(v)=\frac{\psi_t(v)}{\|\Phi_t|_{<Y(x)>}\|},$$

\begin{lemma}\label{l.twococycles}

$\psi^*_t$ is a bounded cocycle on $\cN_M$ in the following sense: for any $\tau>0$, there is
$C_\tau>0$ such that for any $t\in[-\tau,\tau]$,
$$\|\psi^*_t\|\leq C_\tau.$$
Moreover, the two cocycles
$\psi_t$ and $\psi^*_t$ have the same Lyapunov exponent.
\end{lemma}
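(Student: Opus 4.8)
The plan is to prove the two assertions of Lemma~\ref{l.twococycles} separately, first the boundedness of $\psi^*_t$ as a cocycle, then the equality of Lyapunov exponents.

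\emph{Boundedness.} First I would observe that for $t$ in a compact time interval $[-\tau,\tau]$, the quantity $\|\psi^*_t(v)\|/\|v\|$ is controlled by $\|\psi_t\| \cdot \|Y(x)\|/\|Y(\phi_t(x))\|$. Since $\psi_t$ is obtained from the tangent flow $\Phi_t$ by an orthogonal projection, we have $\|\psi_t\| \le \|\Phi_t\|$, and $\|\Phi_t\| = \|d\phi^Y_t\|$ is uniformly bounded for $|t|\le\tau$ by compactness of $M$ and continuity of $(t,x)\mapsto d\phi^Y_t(x)$. The only danger is the ratio $\|Y(x)\|/\|Y(\phi_t(x))\|$, which could blow up if $Y(\phi_t(x))$ approached a singularity. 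But here one uses the alternative expression $\psi^*_t(v) = \psi_t(v)/\|\Phi_t|_{\langle Y(x)\rangle}\|$: the denominator is exactly $\|Y(\phi_t(x))\|/\|Y(x)\|$ since $\Phi_t Y(x) = Y(\phi_t(x))$, so the two formulas agree, and the point is that the \emph{flow direction itself} is transported by $\Phi_t$, hence $\|\Phi_t|_{\langle Y(x)\rangle}\| \ge 1/\|\Phi_{-t}\| \ge 1/\sup_{|s|\le\tau}\|\Phi_s\| > 0$. Combining, $\|\psi^*_t\| \le \sup_{|s|\le\tau}\|\Phi_s\| \cdot \sup_{|s|\le\tau}\|\Phi_s\| =: C_\tau$, uniformly over $x\in M\setminus\Sing(Y)$. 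This takes care of part one with $C_\tau = (\sup_{|s|\le\tau}\|\Phi_s\|)^2$.

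\emph{Same Lyapunov exponent.} For the second assertion I would apply the flow Oseledets theorem (Proposition~\ref{p.Oseledets}) to the non-trivial ergodic measure $\mu$ together with Birkhoff's ergodic theorem. By definition, for any $v\in\cN_x\setminus\{0\}$,
$$\frac{1}{t}\log\|\psi^*_t(v)\| = \frac{1}{t}\log\|\psi_t(v)\| - \frac{1}{t}\log\|\Phi_t|_{\langle Y(x)\rangle}\|.$$
The first term converges to $\lambda$ for $\mu$-a.e.\ $x$ by Proposition~\ref{p.Oseledets}. For the second term, note $\log\|\Phi_t|_{\langle Y(x)\rangle}\| = \log\|Y(\phi^Y_t(x))\| - \log\|Y(x)\|$ is a telescoping (additive) cocycle coming from a genuine function; since $\mu$ gives zero mass to $\Sing(Y)$ (it is non-trivial), $\log\|Y\|$ is $\mu$-integrable provided one checks it is not $-\infty$-integrated — this is where a small argument is needed, but in the two-dimensional / linearizable setting of the paper, or more generally, one shows $\int \log\|Y\|\,d\mu$ is finite, so that $\frac1t(\log\|Y(\phi^Y_t(x))\| - \log\|Y(x)\|)\to 0$ $\mu$-a.e.\ by Birkhoff applied to the ergodic transformation $\phi^Y_{t_0}$ and a standard Borel–Cantelli / subadditivity argument to pass from integer to continuous time. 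Hence $\frac1t\log\|\psi^*_t(v)\|\to\lambda$ as well, so $\psi^*_t$ and $\psi_t$ share the exponent $\lambda$.

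\emph{Main obstacle.} The routine part is the boundedness estimate. The one genuinely delicate point is the integrability $\int\log\|Y\|\,d\mu > -\infty$: a priori $\mu$, though non-atomic on singularities, could concentrate near a singularity fast enough to make $\log\|Y\|$ non-integrable, and then the correction term would not vanish along $\mu$-typical orbits. I expect the resolution to invoke either the fact that $\mu$ is invariant (so the time average of $\log\|\Phi_{t_0}|_{\langle Y\rangle}\|$ equals its space average, which must then be finite because $\frac1n\log\|\Phi_n|_{\langle Y(x)\rangle}\| \le \frac1n\log\|\Phi_n\|$ stays bounded above, and the same for the inverse flow bounds it below), or a direct argument that the additive cocycle $\log\|Y(\phi^Y_t(x))\|$ has sublinear growth $\mu$-a.e. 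This is exactly the kind of singularity-handling subtlety the paper flags elsewhere as the source of difficulty, so I would expect the authors to dispatch it quickly using invariance of $\mu$ rather than any deep estimate.
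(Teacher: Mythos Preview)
Your boundedness argument is correct and is precisely what the paper compresses into the single line ``the uniform boundedness comes from the boundedness of $\Phi_t$ for $t\in[-\tau,\tau]$''; you have just unpacked it.

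For the equality of exponents your decomposition
\[
\frac{1}{t}\log\|\psi^*_t(v)\| \;=\; \frac{1}{t}\log\|\psi_t(v)\| \;-\; \frac{1}{t}\log\|\Phi_t|_{\langle Y(x)\rangle}\|
\]
is the same as the paper's, and the whole point is that the last term tends to zero $\mu$-a.e. You then route this through the integrability of $\log\|Y\|$, flag it as the main obstacle, and leave it only partially resolved. This detour is unnecessary, and in the generality of the lemma (an arbitrary non-trivial ergodic measure of a $\C^1$ flow) integrability of $\log\|Y\|$ is not guaranteed, so that route is genuinely shaky. The paper bypasses the issue entirely: the statement $\lim_{t\to\infty}\frac{1}{t}\log\|\Phi_t(Y(x))\|=0$ is exactly the content of Theorem~\ref{t.exponentsdiffandflow} (the flow direction lies in the zero-exponent subbundle), and the reason behind it needs no integrability. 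Since $\Phi_t(Y(x))=Y(\phi^Y_t(x))$, Oseledets for the tangent cocycle $\Phi_t$ ensures that the limit $\lim_{t\to\infty}\frac{1}{t}\log\|Y(\phi^Y_t(x))\|$ exists $\mu$-a.e.; and because $\mu$ is non-trivial, $\mu$-a.e.\ $x$ is a regular recurrent point, so along return times $t_n\to\infty$ one has $\|Y(\phi^Y_{t_n}(x))\|\to\|Y(x)\|\in(0,\infty)$, forcing the limit to be zero. That is the ``direct argument'' you allude to at the end; it should replace the integrability discussion rather than sit beside it.
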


\begin{proof}

The uniform boundness comes from the boundness of $\Phi_t$ for $t\in [-\tau,\tau]$.

For $\mu$ almost every $x$, we have $\lim \frac{\log\|\Phi_t(Y(x))\|}{t}=0$. This implies that both
$\psi_t$ and $\psi^*_t$ have the same Lyapunov exponents and Oseledets splitting.

\end{proof}

\subsubsection{Pesin block}

\begin{definition}\label{d.quasihyperbolic}
The orbit arc $\phi_{[0,T]}(x)$ is called {\it $(\lambda,T_0)^*$-quasi contracting} with respect to the
scaled linear Poincar\'e flow $\psi^*_t$ if there exists $0<\lambda<1$ and a partition
$$0=t_0<t_1<\dots<t_l=T,\;\;\; \text{where}\;\; t_{i+1}-t_i\in [T_0, 2T_0]$$
such that for $k=0,1,\dots, l-1$, we have
$$\prod_{i=0}^{k-1}\|\psi^*_{t_{i+1}-t_i}|_{\cN_{\phi_{t_i}(x)}}\|\leq \lambda^k$$

\end{definition}

Now we state a $\C^1$ version of Pesin block.

\begin{lemma}\label{l.goodset}
There are $L^{'}, \eta, T_0>0$ and a positive $\mu$ measure subset $\Lambda\setminus \Sing(X)$, such that 
for any $x, \phi^Y_T(x)\in \Lambda$ with $T>L^{'}$, $\phi^Y_{[0,T]}(x)$ is $(\eta,T_0)^*$-quasi contracting with 
respect to the scaled linear Poincar\'e flow $\psi^*_t$.
\end{lemma}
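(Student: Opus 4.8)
\textbf{Proof plan for Lemma~\ref{l.goodset}.}
The goal is to produce, for the ergodic measure $\mu$ with all Lyapunov exponents negative, a positive-measure set $\Lambda$ on which every long return orbit is $(\eta,T_0)^*$-quasi contracting with respect to the scaled linear Poincar\'e flow. The plan is to first pass to a fixed time-$t_0$ map $f=\phi^Y_{t_0}$ for which $\mu$ is ergodic (as in the discussion after Proposition~\ref{p.Oseledets}), and to recall that by Lemma~\ref{l.twococycles} the cocycles $\psi_t$ and $\psi^*_t$ share the same (negative) Lyapunov exponent $\lambda<0$. Fix $\eta$ with $e^{\lambda}<\eta<1$. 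By the flow Oseledets theorem (Proposition~\ref{p.Oseledets}) and Birkhoff's ergodic theorem applied to the function $x\mapsto \log\|\psi^*_{T_0}|_{\cN_x}\|$ along the discrete orbit sampled at multiples of a suitable $T_0$, for $\mu$-a.e.\ $x$ the Birkhoff averages $\frac1k\sum_{i=0}^{k-1}\log\|\psi^*_{T_0}|_{\cN_{\phi_{iT_0}(x)}}\|$ converge to a negative number strictly below $\log\eta$, once $T_0$ is chosen large enough (dividing by $T_0$ makes the average exponent as close to $\lambda$ as we wish).

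The next step is a standard maximal-function / uniformization argument to upgrade this almost-everywhere, asymptotic contraction into a uniform-on-a-positive-measure-set contraction valid for \emph{all} partial products, not just the full one. Concretely, I would apply the Pliss lemma (or equivalently a Borel--Cantelli plus maximal-inequality argument) to the sequence $a_i=\log\|\psi^*_{T_0}|_{\cN_{\phi_{iT_0}(x)}}\|$: since its average is below $\log\eta'$ for some $\eta<\eta'<1$, infinitely many indices $i$ are \emph{$\eta'$-hyperbolic times}, meaning $\sum_{j=i}^{k-1}a_j\le (k-i)\log\eta'$ for all $k>i$. Define $\Lambda_0$ to be the set of points that are themselves at an $\eta'$-hyperbolic time and whose forward orbit hits such times with positive frequency; by the ergodic theorem and the Pliss density estimate, $\mu(\Lambda_0)>0$. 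One then intersects $\Lambda_0$ with the regular set $\supp(\mu)\setminus\Sing(Y)$, which is automatic since $\mu$ is non-trivial, and removes a null set where Oseledets or Birkhoff convergence fails, obtaining a set of positive measure; finally restrict to a compact subset $\Lambda$ of positive measure. The condition that both endpoints $x,\phi^Y_T(x)$ lie in $\Lambda$ is what lets us break $[0,T]$ at hyperbolic times at both ends, and the choice $T_0$ large with block lengths in $[T_0,2T_0]$ absorbs the mismatch between the continuous time $T$ and the discrete grid $T_0\mathbb{Z}$.

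The main obstacle, and the place where the two-dimensionality and the interplay of $\psi_t$ versus $\psi^*_t$ matter, is handling the \emph{intermediate} times between consecutive hyperbolic times and the passage from the discrete cocycle to the flow cocycle: a hyperbolic time for the sampled sequence $a_i$ only controls products over grid points $iT_0$, whereas Definition~\ref{d.quasihyperbolic} demands control over an arbitrary partition with steps in $[T_0,2T_0]$. This is resolved by using the uniform boundedness of $\psi^*_t$ for $|t|\le 2T_0$ from Lemma~\ref{l.twococycles} to merge or split grid blocks into admissible blocks while only perturbing the exponent by a bounded additive constant per block; choosing $T_0$ large enough that $\frac{1}{T_0}\log C_{2T_0}$ is negligible compared with $\log(\eta/\eta')$ makes the merged estimate $\prod_{i=0}^{k-1}\|\psi^*_{t_{i+1}-t_i}|_{\cN_{\phi_{t_i}(x)}}\|\le \eta^k$ hold. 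The secondary technical point — that $\log\|\psi^*_{T_0}|_{\cN_x}\|$ is $\mu$-integrable despite the proximity of $\supp(\mu)$ to the saddle — follows because $\mu$ is non-trivial so $\supp(\mu)$ stays a definite distance from $\Sing(Y)$, making $\|Y\|$ bounded away from $0$ and $\psi^*_{T_0}$ genuinely bounded on a neighborhood of $\supp(\mu)$; hence all the ergodic-theorem inputs are legitimate.
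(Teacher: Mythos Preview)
Your plan is sound and would give the lemma, but it differs from the paper's route and contains one factual slip worth flagging.

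The paper does not invoke Pliss. It works at two scales: a fine step $N_0$ chosen so that $\frac{1}{N_0}\int\log\|\psi^*_{N_0}\|\,d\mu<a<0$, passes to an ergodic component $\mu_1$ of $\mu$ for $\phi^Y_{N_0}$, and takes $\Lambda$ to be a compact positive-$\mu_1$-measure set on which the Birkhoff sums $\frac{1}{m}\sum_{i=0}^{m-1}\log\|\psi^*_{N_0}|_{\cN_{f^{iN_0}(x)}}\|$ are below $aN_0$ for \emph{all} $m$ beyond a fixed threshold (an Egorov-type uniformization, not a density argument). The coarse step $T_0=N_2\gg N_0$ then absorbs, via the global bound of Lemma~\ref{l.twococycles}, the boundary error from aligning each block $[n_i,n_{i+1}]$ with the $N_0$-grid. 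Your single-scale Pliss route reaches the same conclusion and is arguably tidier: once $x$ is itself a hyperbolic time for the $T_0$-sampled cocycle, all forward partial products are automatically controlled. The paper's approach trades the Pliss lemma for an elementary Egorov set plus submultiplicativity; yours trades the second scale and the block-alignment claim for Pliss. One small gap in your outline: you fix $t_0$ so that $\mu$ is $\phi^Y_{t_0}$-ergodic but then sample at a large $T_0$, and $\mu$ need not be ergodic for $\phi^Y_{T_0}$; you should pass to an ergodic component for $\phi^Y_{T_0}$ just as the paper does for $f^{N_0}$. Also note that Definition~\ref{d.quasihyperbolic} asks only for the \emph{existence} of one admissible partition, not control over arbitrary ones, and neither argument actually uses the endpoint hypothesis $\phi^Y_T(x)\in\Lambda$---that is needed only for the subsequent shadowing step.

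The slip: you assert that ``$\supp(\mu)$ stays a definite distance from $\Sing(Y)$'' because $\mu$ is non-trivial. That is false in the setting this paper cares about---the non-trivial measure $\mu^X$ for a Cherry flow is supported on the quasi-minimal set $\Lambda^X$, which \emph{contains} the saddle $\sigma^X$. What non-triviality gives is only $\mu(\Sing(Y))=0$. Integrability of $\log\|\psi^*_{T_0}|_{\cN_\cdot}\|$ is not at issue anyway, since Lemma~\ref{l.twococycles} already provides a uniform bound $\|\psi^*_t\|\le C_\tau$ valid at \emph{every} regular point, however close to a singularity; so the function is bounded and integrable without any separation hypothesis. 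The misconception is harmless here but important to correct, since the whole point of the scaled linear Poincar\'e flow is precisely to retain uniform control near singularities.
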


\begin{proof}
Taking $1\leq t_0 <2$ such that $\mu$ is an ergodic measure for $f=\phi^Y_{t_0}$. For simplicity, 
we assume $t_0=1$.

By subadditive ergodic theorem, there is $a<0$ such that
$$\lim_{t\to \infty}\frac{1}{t}\int \log \|\psi_t^*\|d\mu<a.$$

Take $N_0$ sufficiently large, such that $\frac{1}{N_0}\int \log \|\psi_{N_0}^*\|d \mu<a.$ 
Consider the ergodic decomposition of $\mu$ for $f^{N_0}=\phi^Y_{N_0}$:
$$\mu=\frac{1}{k_0}\big(\mu_1+\dots+\mu_{k_0}\big).$$
By changing the order, we suppose that:
$$\frac{1}{N_0}\int \log \|\psi_{N_0}^*\|d \mu_1<a.$$

Note that $\mu_1$ is an $f^{N_0}$ ergodic measure. Applying Birkhoff ergodic theorem, for $\mu_1$ almost every $x$, 
$$\lim_m \frac{1}{mN_0}\sum_{i=0}^{m-1}\log\|\psi_{N_0}^*|_{\cN_{f^{iN_0}(x)}}\|<a.$$ 
There is $n_x>0$ such that for any $m>n_x$, 
$$\frac{1}{mN_0}\sum_{i=0}^{m-1}\log\|\psi_{N_0}^*|_{\cN_{f^{iN_0}(x)}}\|<a.$$

Choose $N_1$ such that the set $\Lambda^{'}=\{x; n(x)<N_1\}$ has positive $\mu_1$ measure. Let 
$\Lambda\subset \Lambda^{'}$ be a compact subset with positive $\mu_1$ measure. It follows
immediately that $\mu(\Lambda)>0$. By Lemma~\ref{l.twococycles}, let
$$K=\sup_{|t|\leq N_0,y\in \Lambda}\{\|\psi_t^*|_{\cN_{y}}\|\}.$$
Choose $N_2$ sufficiently large and $b<0$ such that 
$$\frac{N_2+N_0}{N_0}a+3K<b<0.$$

We claim that for any sequence $n_1<n_2\cdots<n_l$ satisfying $N_2\leq n_{i+1}-n_i\leq N_2+N_0$ for $0\leq i \leq l-1$
and $x\in \Lambda$: 
$$\frac{1}{l}\sum_{i=0}^{l-1}\log \|\psi^*_{n_{i+1}-n_i}|_{\cN_{f^{n_i}(x)}}\|<b.$$

Let continue the proof. Choose $L^{'}>0$ be sufficiently large, such that for any $n>L^{'}$, 
there always exists a sequence $n_1<n_2\cdots<n_l$ satisfying $N_2\leq n_{i+1}-n_i\leq N_2+N_0$ for each $0\leq i \leq l-1$.
Then by the above claim, we conclude the proof of this lemma.

It remains to prove the claim.
For each $0\leq i<l$, denote 
$$k_i=[\frac{n_{i+1}}{N_0}]-[\frac{n_{i}}{N_0}]-1, n^{'}_i=([\frac{n_i}{N_0}]+1)N_0\;\; \text{and}\;\; n^*_{i+1}=[\frac{n_{i+1}}{N_0}]N_0.$$
Then $n^*_{i}\leq n_i \leq n^{'}_{i}$, $n^*_{i+1}-n^{'}_i=k_i N_0$ and
$$\psi^*_{n_{i+1}-n_{i}}|_{\cN_{f^{n_i}(x)}}= \psi^*_{n_{i+1}-n^*_{i+1}}|_{\cN_{f^{n^*_{i+1}}(x)}}\circ \psi^*_{k_iN_0}|_{\cN_{f^{n_i^{'}}(x)}}\circ\psi^*_{n^{'}_i-n_i}|_{\cN_{f^{n_i}(x)}}$$
Observe that $n_{i+1}-n^*_{i+1}\leq N_0$ and $n^{'}_i-n_i\leq N_0$, which imply in particular that 
\begin{equation*}
\begin{split}
\log\|\psi^*_{n_{i+1}-n_i}|_{\cN_{f^{n_i}(x)}}\|& \leq 2K+\log \|\psi^*_{k_iN_0}|_{\cN_{f^{n_i^{'}}(x)}}\| \\
& \leq 3K+ \log\|\psi^*_{n^{'}_{i+1}-n^{'}_i}|_{\cN_{f^{n_i^{'}}(x)}}\|.\\
\end{split}
\end{equation*}

Because $n^{'}_0=0$, we have that
\begin{equation*}
\begin{split}
\frac{1}{l}\sum_{i=0}^{l-1}\log\|\psi^*_{n_{i+1}-n_i}|_{\cN_{f^{n_i}(x)}}\|& \leq \frac{1}{l}\sum_{i=0}^{l}\log\|\psi^*_{n^{'}_{i+1}-n^{'}_i}|_{\cN_{f^{n_i^{'}}(x)}}\|+3K \\
& \leq \frac{1}{l}\sum_{j=0}^{\frac{n^{'}_{l+1}}{N_0}} \log\|\psi^*_{N_0}|_{\cN_{f^{jN_0}(x)}}\|+3K \\
&\leq \frac{n^{'}_{l+1}}{lN_0} a+3K  \leq \frac{N_2+N_0}{N_0} a+3K \leq b\\
\end{split}
\end{equation*}

\end{proof}

\subsubsection{Liao's shadowing lemma for scaled linear Poincar\'e flow}

In this subsection we introduce the Liao's shadowing lemma for scaled linear Poincar\'e flow of~\cite{L89}. 
The idea of the proof is explained:

\begin{theorem}\label{t.liaoclosinglemma}

Given a compact set $\Lambda\cap \Sing(Y)=\emptyset$, and 
$\eta>0, T_0 > 0$, for any $\vep > 0$ there exist $\delta> 0$ and $L>0$, such that for any $(\eta,T_0)^*$-quasi 
contracting orbit arc $\phi_{[0,T]}(x)$ with respect to the scaled linear Poincar\'e flow $\psi_t^*$ 
which satisfies $x, \phi^Y_T(x)\in \Lambda$ and $d(x,\phi_T(x))\leq \delta$,
there exists a point $p\in M$ and a $\C^1$ strictly increasing function $\theta:[0,T] \rightarrow \mathbb{R}$ 
such that
\begin{itemize}

\item[(a)] $\theta(0)=0$ and $1-\vep <\theta^{'}(t)<1+\vep$;

\item[(b)] $p$ is a periodic sink with $\tau(p)=\theta(T)$;

\item[(c)] $d(\phi^Y_t(x),\phi^Y_{\theta(t)}(p))\leq \vep\|Y(\phi^Y_t(x))\|, t\in [0,T]$;

\item[(d)] $\Orb(p)$ has uniform size of stable manifold at $p$.
\end{itemize}
\end{theorem}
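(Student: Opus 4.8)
The plan is to prove Theorem~\ref{t.liaoclosinglemma} following Liao's strategy, transporting the quasi-contraction hypothesis for the scaled linear Poincar\'e flow into a genuine contraction for an honest Poincar\'e return map, and then closing up the orbit by a graph-transform / contraction-mapping argument. First I would set up the normal tubular coordinates along the orbit arc $\phi_{[0,T]}(x)$: for each $t$ in the partition, identify a small disk $D_t\subset \cN_{\phi^Y_t(x)}$ (of radius proportional to $\|Y(\phi^Y_t(x))\|$, which is the scaling built into $\psi^*_t$) transverse to the flow, and let $P_{t_i,t_{i+1}}:D_{t_i}\to D_{t_{i+1}}$ be the holonomy (Poincar\'e) map between consecutive sections. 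The derivative of $P_{t_i,t_{i+1}}$ at the center is exactly $\psi_{t_{i+1}-t_i}|_{\cN}$, and after the $\|Y\|$-rescaling it becomes $\psi^*_{t_{i+1}-t_i}|_{\cN}$; the $(\eta,T_0)^*$-quasi-contraction hypothesis says precisely that the products of the norms of these rescaled linear maps decay like $\eta^k$. Since on the compact set $\Lambda$ (away from singularities) $\|Y\|$ is bounded above and below, and the return times lie in $[T_0,2T_0]$, the holonomy maps are uniformly $\C^1$ on the rescaled disks, so by choosing the disks small enough (this fixes $\delta$) the nonlinear maps $\tilde P_{t_i,t_{i+1}}$ in rescaled coordinates are $\eta'$-Lipschitz contractions with $\eta<\eta'<1$ on the whole disk.

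Next I would close the orbit. Because $d(x,\phi^Y_T(x))<\delta$ and the two endpoints lie in $\Lambda$, the point $\phi^Y_T(x)$ can be pushed to $D_0$ by a short holonomy, so the composition $Q=P_{0,t_1}\circ\cdots\circ P_{t_{l-1},T}$ followed by this small correction is a well-defined self-map of (a slightly shrunk) $D_0$. In rescaled coordinates $Q$ is a composition of uniform contractions times a possibly expanding-in-time but bounded correction; the quasi-contraction estimate guarantees the total linear part has norm $<1$, and shrinking $\delta$ makes the nonlinear composition a genuine contraction of $D_0$ into itself. The Banach fixed point theorem then yields a fixed point $q\in D_0$, whose $\phi^Y$-orbit is periodic; I would set $p=q$ (or a point on its orbit) and define $\theta$ by reparametrizing the time it takes the periodic orbit through $p$ to traverse the analogues of the sections — $\theta(t)$ is the $\phi^Y$-time from $p$ to the section $D_t$ hit near $\phi^Y_t(x)$. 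Since the sections are $\C^1$ and transverse and the geometry is uniformly controlled, $\theta$ is $\C^1$ with $\theta'$ as close to $1$ as we like once $\vep$ (hence $\delta$, $L$) is chosen, giving (a). Property (c) follows because the fixed point lies in a disk of radius $\le\vep\|Y(\phi^Y_t(x))\|$ around $\phi^Y_t(x)$, which is exactly what the scaled coordinates encode, and (b)'s periodicity is immediate; the period $\tau(p)=\theta(T)$.

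For the sink property (b)/(d), the point is that the return map of $\Orb_Y(p)$ around the full period is, up to the uniform correction terms, the composition of all the $\tilde P_{t_i,t_{i+1}}$, and the quasi-contraction estimate $\prod\|\psi^*_{t_{i+1}-t_i}\|\le\eta^k$ forces the norm of the derivative of the full return map to be $\le\eta^{l}$, which is $<1$; hence $p$ is a periodic sink. Moreover this derivative bound is uniform (depends only on $\eta,T_0,\Lambda$, not on $T$), and a standard argument (e.g. via a uniform adapted metric / the uniform $\C^1$ size of the contracting return map) shows the local stable manifold of $\Orb_Y(p)$ at $p$ contains a disk of a size bounded below independently of $x$ and $T$ — this is (d). I would emphasize that the uniformity in (d) is what Lemma~\ref{l.nonnegative} crucially exploits (it yields infinitely many distinct sinks with stable manifolds of a fixed size, contradicting finiteness of volume).

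The main obstacle I anticipate is the presence of singularities in $\overline{\supp(\mu)}$: although $\Lambda$ is chosen disjoint from $\Sing(Y)$, the orbit arc $\phi^Y_{[0,T]}(x)$ may pass near singularities, where $\|Y\|$ becomes small and the ordinary linear Poincar\'e flow $\psi_t$ is wildly expanding or contracting. The whole purpose of passing to the \emph{scaled} flow $\psi^*_t$ (Lemma~\ref{l.twococycles}) is to tame this: the rescaling by $\|Y(x)\|/\|Y(\phi^Y_t(x))\|$ absorbs the degeneracy, turning the holonomy estimates into uniform ones on disks whose radius shrinks proportionally to $\|Y\|$. Making this uniformity precise — controlling the $\C^1$ size of the holonomy maps between sections of radius $\sim\|Y\|$ near singularities, and checking that the fixed-point disk produced stays inside the region where the estimates hold — is the technical heart, and is exactly where Liao's original argument in~\cite{L89} is delicate; I would follow it, adapting the bookkeeping to the two-dimensional surface setting where $\cN_x$ is one-dimensional, which simplifies the linear algebra considerably (norms of $1\times1$ matrices multiply) but does not remove the analytic difficulty near $\Sing(Y)$.
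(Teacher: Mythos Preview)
Your overall strategy---reduce to holonomy maps between normal sections rescaled by $\|Y\|$, then close up by a contraction-mapping argument---is the paper's strategy as well. But there is a genuine gap at the step where you assert that ``the nonlinear maps $\tilde P_{t_i,t_{i+1}}$ in rescaled coordinates are $\eta'$-Lipschitz contractions with $\eta<\eta'<1$ on the whole disk.'' The $(\eta,T_0)^*$-quasi-contraction hypothesis (Definition~\ref{d.quasihyperbolic}) controls only the \emph{partial products} $\prod_{i=0}^{k-1}\|\psi^*_{t_{i+1}-t_i}\|\le\lambda^k$; it does \emph{not} force each individual factor to lie below $1$. For instance, if the first factor equals $\lambda^{10}$ then the second may be as large as $\lambda^{-8}$ (subject only to the uniform bound $C_\tau$ of Lemma~\ref{l.twococycles}) while the product of the first two is still $\le\lambda^2$. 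So the picture ``$Q$ is a composition of uniform contractions'' is false, and with it both your Banach fixed-point step and the ``standard argument'' you invoke for the uniform stable manifold in (d): once some intermediate steps may expand, you must verify that the successive images of a ball of definite size stay inside the domain where the $C^1$ estimates hold, and this is exactly what is at stake when $l\to\infty$.

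The paper closes this gap by invoking Gan's \emph{well-adapted sequence} (\cite{G}, Lemma~3.1): one produces positive numbers $c_1,\dots,c_l$ with $g_k=\prod_{j\le k}c_j\le1$ and $g_l=1$ such that the conjugated maps $\tilde\cT_j(a)=g_j^{-1}\cT_j(g_{j-1}a)$ satisfy $\|D\tilde\cT_j(0)\|\le\lambda$ \emph{individually}. After this renormalization every step really is a uniform contraction, the standard hyperbolic shadowing lemma applies, and the periodic point acquires a stable disk of uniform size in the $\tilde\cT$-coordinates; since $g_l=1$ one has $\Psi_l=\tilde\Psi_l$, so the fixed point and its stable manifold transfer back unchanged, giving (d). Your sketch becomes correct once you insert this renormalization (or, equivalently, argue directly from the partial-product bound together with the two-sided uniform bound on each $\|\psi^*_{t_{i+1}-t_i}\|$ that intermediate images of a small ball never leave it---this works, but it is precisely what the well-adapted sequence packages cleanly and what your current write-up omits).
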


\begin{proof}

For simplicity, we assume $M$ is an open set in $R^d$, which implies that for every regular point $x\in M$, $\cN_x$ is a $d-1$-dimensional hyperplane. For $\beta>0$, we denote by $\cN_x(\beta)$ the ball contained in $\cN_x$ with radius $\beta$.

The first step is to translate the problem for flow into the shadowing lemma for a sequence of maps:
by~\cite{GY}[Lemmas 2.2, 2.3], there is $\beta$ depending on $T_0$ such that the holonomy map induced by $\phi^Y$ is 
well defined between 
$$\cP_{x,\phi^Y_t(x)}: \cN_x(\beta\|Y(x)\|)\rightarrow \cN_{\phi^Y_t(x)}\;\;\; \text{for any}\;\;\; t\in [T_0,2T_0],$$
which is conjugate to the following map:
$$\cP^*_{x,\phi^Y_t(x)}, \cN_x(\beta)\rightarrow \cN_{\phi^Y_t(x)}:  \cP^*_{x,\phi^Y_t(x)}(a)=\frac{\cP_{x,\phi^Y_t(x)}(a\|Y(x)\|)}{\|Y(\phi^Y_t(x))\|}.$$
The tangent map of $\cP^*_{x,\phi^Y_t(x)}$ is uniformly continuous.

Take the sequence of times $0=t_0<t_1<\dots<t_l=T$ in the definition of quasi contracting orbit. 
We consider the local diffeomorphisms induced by the holonomy maps:
$$\cT_i= \cP^*_{\phi^Y_{t_{i-1}}(x),\phi^Y_{t_{i}}(x)}\;\;\; \text{for}\;\;\; 1\leq i\leq l-1;$$
and $ \cT_{l}: \cN_{\phi^Y_{t_{l-1}}(x)}(\beta)\rightarrow \cN_x$. Because $d(x,\phi^Y_T(x))$ is sufficiently small, 
the holonomy map $\cT_l$ is well defined. For each $1\leq i \leq l$, we have
$$T\cT_i(0)=\psi^*_{t_{i}-t_{i-1}}|_{\cN_{\phi^{Y}_{t_{i-1}}(x)}}.$$
Then, by the definition of quasi contracting, there is $0<\lambda<1$ such that for $k=1,\dots, l$, we have
$$\prod_{i=1}^{k}\|\cT_i\|\leq \lambda^k.$$

Now we apply the version of Liao's shadowing lemma on discrete quasi hyperbolic maps (\cite{G}[Theorem 1.1]), 
see also~\cite{L79a, L85, L96} and the explanation below.
On the $d(x,\phi^Y_{T}(x))$-pseudo orbit
$\{0_x,0_{\phi^Y_{t_1}(x)},\dots, 0_{\phi^Y_{t_{l-1}}(x)}\}$, there is $L>0$ and a 
periodic point $p\in \cN_x$ for the maps $\{\cT_0,\cdots, \cT_{l-1}\}$, 
whose orbit $Ld(x,\phi^Y_{T}(x))$-shadows the pseudo orbit.

Using the version of flow tubular theorem~\cite{GY}[Lemmas 2.2], one can prove (a), (b) and (c) above. 
Now let us focus on the proof of (d). We follow the proof of~\cite{G}. By Lemma 3.1 of~\cite{G}, there is a sequence of positive numbers
$\{c_i\}_{i=1}^{l}$ called {\it well adapted} such that:
\begin{itemize}

\item[(i)] $g_j=\prod_{j=1}^k c_j\leq 1 $, $k=1,\dots, l-1$ and $g_{l}=\prod_{j=1}^{l} c_j= 1 $;

\item[(ii)] denote $\tilde{\cT}_j(a)= g_j^{-1}\cT_j(g_{j-1}a)$ for $a\in \mathbb{R}^{d-1}$, then
$$\|T\tilde{\cT}_j(0)|_{E_j}\|\leq \lambda.$$

\end{itemize}

It was shown in~\cite{G} (p. 631) that the well adapted sequence is uniformly bounded from above and from zero, and the
sequence of contracting maps $\{\tilde{\cT}_j\}_{j=1}^{l}$ are Lipschtez maps (with uniform Lipschtez constant). 

Denote by 
$$\Psi_k= \cT_{k} \cdots \circ \cT_1 \;\; \text{and}\;\; \tilde{\Psi}_k=\tilde{\cT}_{k}\cdots \circ \tilde{\cT}_1.$$ 
It is easy to see that 
\begin{equation}\label{e.conjugation}
\begin{split}
\Psi_k=g_k \tilde{\Psi}_k \;\; & \text{and}  \;\;\Psi_l=\tilde{\Psi}_l. \\
\end{split}
\end{equation}

Observe that $\{0_{x},0_{\phi^Y_{t_1}}(x),\cdots, 0_{\phi^Y_{t_{l-1}}(x)}\}$ is still a $d(x,\phi^Y_{T}(x))$-pseudo orbit of
sequence of uniformly contracting diffeomorphisms $\{\tilde{\cT}_j\}_{j=0}^{l-1}$. By the standard shadowing lemma for hyperbolic
diffeomorphisms (see~\cite{G}[Lemma 2.1]), there is $L>0$ such that the pseudo orbit is $Ld(x,\phi^Y_{T}(x))$-shadowed by periodic orbit $\{\tilde{p},\tilde{p}_1,\dots, \tilde{p}_{l-1}\}$, and this periodic orbit has uniform size
of stable manifold. By \eqref{e.conjugation}, $\{g_k \tilde{p}_k\}_{k=1}^{l}$ is a pseudo orbit for the 
original sequence of maps $\{\cT_k\}_{k=0}^{l-1}$. Observe that $g_l=1$, which implies that $p_0$ and $\tilde{p}_0$ coincide 
and have the same stable manifold. Hence, $p_0$ has uniform size of stable manifold. We conclude the proof of (d).

\end{proof}
\subsubsection{Proof of Theorem~\ref{t.shadowing}}

It is easy to see that Theorem~\ref{t.shadowing} is a direct corollary of Lemma~\ref{l.goodset}
and Theorem~\ref{t.liaoclosinglemma}.

\section{Proof of Theorem~\ref{main.B}}

\begin{proof}[Proof of Theorem~\ref{main.B}]:
By Theorem~\ref{t.ergodicmeasures}(b), $\Lambda^X$ admits two ergodic measures: $\delta_{\sigma^X}$ and $\mu^X$. 
By Corollaries~\ref{c.nonhyperbolicity} and~\ref{c.divformeasures}, $\mu^X$ is non-hyperbolic and $\cD_{X}(\mu^X)=0$. 

For every $x\in M\setminus \tilde{\sigma}^X$, its forward orbit converges to the quasi-minimal set. In particular, any accumulated point
of $\frac{1}{t}\int_0^{t} \delta_{\phi^X_s(x)} ds$ can be written as $a\delta_{\sigma^X}+(1-a)\mu^X$ for some $0\leq a \leq 1$.

For $0\leq b \leq 1$, denote $K_b=\{a\delta_{\sigma^X}+(1-a)\mu^X; b\leq a \leq 1\}$ and 
$$\Gamma_b=\{x\in \bT^2;\; \text{there is}\; t_i\to \infty\; \text{such that}\; \lim_i \frac{1}{t_i}\int_{0}^{t_i} \delta_{\phi^X_s(x)} ds \in K_b\}.$$
We claim that $\Leb(\Gamma_b)=0$ for every $b>0$. It is easy to see that Theorem~\ref{main.B} follows from this claim
immediately. Now let us prove this claim.
\begin{proof}
Because $\cD_X(\sigma^X)>0$ and $\cD_X(\mu^X)=0$, there is $c>0$ such that $\cD_X|_{K_b}>2c$. Taking $\cV$ a small 
neighborhood of $K_b$ in the probability space of $\bT^2$, such that for any measure $\mu\in \cV$ which is possibly 
not invariant, we have $\int div_X(x)d\mu(x)>c$.

Denote by 
$$\Gamma_{b,n}=\{x;\; \text{there is} \; t_x\in [n,n+1)\; \text{such that}\; \frac{1}{t_x}\int_{0}^{t_x}\delta_{\phi^X_{s}(x)} ds\in \cV\}.$$
Then for every $x\in \Gamma_{b,n}$, we have 
$$\int div_X(y) d\big(\frac{1}{t_x}\int_{0}^{t_x}\delta_{\phi^X_{s}(x)}ds\big)(y) =\frac{1}{t_x}\int_{0}^{t_x} div(\phi^X_s(x)) ds >c ,$$
which implies by Lemma~\ref{l.div} that
$$\det \Phi_{t_x}|_{T_x\bT^2}=\exp^{t_x\int_{0}^{t_x} div_X(\phi^X_s(x)) ds}>\exp^{ct_x}.$$
Denote by $C=\max_{x\in \bT^2, t\in [0,1)}\{\det \Phi_t|_{T_x\bT^2}\}$. Then 
for any $x\in \Gamma_{b,n}$, 
$$\det \Phi_{n}|_{T_x\bT^2}>\frac{1}{C}\exp^{cn},$$ 
which implies that $\Leb(\Gamma_{b,n})\leq C\exp^{-cn}$. 

Because $\Gamma_b\subset \bigcap_n \bigcup_{i>n} \Gamma_{b,n}$, the above argument shows that $\Leb(\Gamma_b)=0$. 
The proof of the claim is complete.
\end{proof}
\end{proof}

\section{Proof of Theorems~\ref{main.C} and~\ref{main.D}}
\subsection{Perturbation of Cherry flow\label{ss.perturbation}}

Recall that $\cS_0$ is a circle transverse to $X$. There is a $\C^1$ neighborhood $\cU_0$ of $X$
such that for any flow $Y\in\cU_0$, $\cS_0$ is still transverse to $Y$. Moreover, the inverse of $\phi^Y$ 
can be looked as a suspension of a continuous circle map $g^Y$, where $g^Y$ is a monotone circle map 
which is constant on an interval $I^Y$. The intersection between 
a periodic orbit of $\phi^Y$ with $\cS_0$ corresponds to a periodic orbit of $g^Y$. In contrary, 
a periodic orbit of $g^Y$ which does not intersect with $I^Y$ corresponds to a periodic orbit of 
$\phi^Y$. Denote by $\Per(Y)$ and $\Per(g^Y)$ the set of periodic orbits for $Y$ and $g^Y$ respectively.
A segment $I=[p,q]\subset \cS_0$ is called a {\it $g^Y$-periodic segment} if $p,q$ both are periodic 
points of $g^Y$. The proof of the following lemma is the same as for homeomorphisms over circle.

\begin{lemma}\label{l.transversehomeo}

Suppose $g^Y$ has a periodic point with period $k$. Then its rotation number $\rho(g^Y)$ is rational, all the periodic 
points have the same period, and every periodic segment is fixed by $(g^Y)^k$. Moreover, 
for any $\theta\in \cS_0$, $\omega(\theta)$ is a periodic orbit of $g^Y$.

\end{lemma}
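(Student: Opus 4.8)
The plan is to reduce this to the classical Poincaré theory for circle homeomorphisms, the only subtlety being that $g^Y$ is not a homeomorphism but a degree-one monotone map that is constant on a single interval $I^Y$. First I would recall that $g^Y$ has a well-defined rotation number $\rho(g^Y)$ (this is standard for monotone degree-one circle maps), and that the existence of a periodic point of period $k$ forces $\rho(g^Y)=q/k$ in lowest terms; conversely rationality of the rotation number, here assumed implicitly via the periodic point, is what we exploit. I would lift $g^Y$ to a monotone map $G:\RR\to\RR$ with $G(x+1)=G(x)+1$, so that a period-$k$ periodic point corresponds to $x_0$ with $G^k(x_0)=x_0+q$ for some integer $q$, and $\rho(g^Y)=q/k$.

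Next I would show all periodic points have the same period. Suppose $\theta$ is periodic of period $k$ and $\theta'$ is periodic of period $k'$. Using monotonicity of $G$ (hence of all iterates $G^n$) and the translation-equivariance, the function $G^{kk'}(x)-x$ is well-defined on the circle, continuous, and monotone-commuting with both orbits; the standard argument shows $G^{kk'}(x)-x = k'q = kq'$ where $\rho = q/k = q'/k'$, which pins down both periods from $\rho$ once we know the rotation number is the same. The point where care is needed, and which I expect to be the main obstacle, is precisely that $g^Y$ collapses the interval $I^Y$: I must check that the collapsing interval does not create a periodic point of anomalous period or destroy the order-preservation argument. The key observation is that $g^Y$ is still \emph{weakly} order-preserving (it never reverses order, it only identifies points), so for two genuine periodic orbits their cyclic order on $\cS_0$ is preserved by $g^Y$, and the classical combinatorial argument (a periodic orbit of a circle homeomorphism with rotation number $q/k$ has its points permuted exactly as the orbit of $x\mapsto x+q/k$) goes through verbatim. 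A periodic point cannot lie in the interior of $I^Y$ in a way that shortens its period, since $g^Y(I^Y)$ is a single point and one then argues on the orbit of that point.

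For the statement that every periodic segment $I=[p,q]$ is fixed by $(g^Y)^k$: since $p,q$ are periodic of period $k$, both $(g^Y)^k(p)=p$ and $(g^Y)^k(q)=q$; by monotonicity $(g^Y)^k$ maps the arc $[p,q]$ onto an arc with the same endpoints, and since $(g^Y)^k$ has degree one and fixes both endpoints of $[p,q]$ while respecting cyclic order, it must map $[p,q]$ to itself (the only other option, mapping it to the complementary arc, is excluded by the degree/orientation count — the lift $G^k$ restricted to $[\tilde p,\tilde q]$ with $G^k(\tilde p)=\tilde p+q$, $G^k(\tilde q)=\tilde q + q$ is a monotone map of an interval onto itself after subtracting the integer $q$). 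Finally, for the last assertion that $\omega(\theta)$ is a periodic orbit for every $\theta\in\cS_0$: I would take the lift $\tilde\theta$, consider $G^k(x)-x-q$ which is continuous, $1$-periodic, and vanishes exactly at (lifts of) periodic points; on each complementary interval it has a constant sign, so $G^{nk}(\tilde\theta)$ is monotone in $n$ and bounded modulo the integer shifts, hence converges to a zero of $G^k(x)-x-q$, i.e. to a periodic point. Translating back to the circle, $\omega(\theta)$ is the periodic orbit through that limit point. The collapsing interval $I^Y$ only helps here, as it can only accelerate the approach to a periodic orbit (once an orbit enters $I^Y$ it lands exactly on $g^Y(I^Y)$, a point whose forward orbit is then a genuine orbit of the induced circle map). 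Thus the only real work is bookkeeping the weak-monotonicity of $g^Y$ in place of strict monotonicity, which is routine.
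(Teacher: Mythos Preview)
Your proposal is correct and is exactly what the paper intends: the paper gives no proof at all, simply remarking that ``the proof of the following lemma is the same as for homeomorphisms over circle.'' Your sketch supplies precisely that classical argument, together with the (routine) verification that the weak monotonicity of $g^Y$ caused by the collapsed interval $I^Y$ does not disturb any step.
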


\begin{corollary}\label{c.periodicpoint} 
$\#\Per(Y)\geq \#\Per(g^Y)-1$. More precisely, suppose $Y$ contains $n$ periodic sources, 
then it has at least $n$ periodic orbits which are not periodic sources, and $\#\Per(Y)\geq 2n$; in contrary,
when $Y$ contains $n$ periodic sinks, then it has at least $n-1$ periodic orbits which are not periodic sinks,
and $\#\Per(Y)\geq 2n-1$.
\end{corollary}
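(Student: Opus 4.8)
The plan is to read everything off Lemma~\ref{l.transversehomeo} together with the correspondence between periodic orbits of the flow $\phi^Y$ and periodic orbits of the circle map $g^Y$ recalled at the start of Subsection~\ref{ss.perturbation}. Recall that a periodic orbit of $\phi^Y$ meeting $\cS_0$ gives a periodic orbit of $g^Y$, and conversely a periodic orbit of $g^Y$ disjoint from the ``flat'' interval $I^Y$ gives back a periodic orbit of $\phi^Y$. So the only periodic orbit of $g^Y$ that may fail to lift to a periodic orbit of $\phi^Y$ is one whose $g^Y$-orbit hits $I^Y$; by Lemma~\ref{l.transversehomeo} all periodic points of $g^Y$ share a common period $k$ and lie on periodic segments fixed by $(g^Y)^k$, hence there is at most one $g^Y$-periodic orbit meeting the single interval $I^Y$. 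This immediately gives $\#\Per(Y)\ge \#\Per(g^Y)-1$.

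For the sharper bookkeeping, first I would observe that the type of a periodic orbit of $\phi^Y$ (source/sink/saddle in the Poincar\'e section sense) is reflected by whether the corresponding periodic point of $g^Y$ is a topological source, sink, or neither. If $Y$ has $n$ periodic sources, each meets $\cS_0$ in a periodic point of $g^Y$ which is repelling for $(g^Y)^k$; between two consecutive (in cyclic order on $\cS_0$) such repelling periodic orbits of $g^Y$ the map $(g^Y)^k$ must have an attracting or semi-stable periodic point on the periodic segment joining them (here one uses that $(g^Y)^k$ fixes each periodic segment and is monotone, so its graph must cross the diagonal from above between two crossings-from-below). None of these ``intermediate'' periodic orbits is a source. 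There are $n$ gaps in the cyclic order, giving at least $n$ periodic orbits of $g^Y$ that are not sources; at most one of them is killed by passing through $I^Y$, but a source is never killed, so after lifting we still have at least $n$ non-source periodic orbits of $Y$, in addition to the $n$ sources: $\#\Per(Y)\ge 2n$. The dual statement for $n$ periodic sinks is the same argument applied to $g^Y$ or, more cleanly, to the time-reversed flow (whose Poincar\'e map is essentially $(g^Y)^{-1}$): one gets at least $n-1$ non-sink periodic orbits of $g^Y$ between the $n$ sinks, and here the loss of one orbit through $I^Y$ must be accounted for, yielding $\#\Per(Y)\ge n+(n-1)=2n-1$.

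The main obstacle I expect is making the ``between two consecutive repellers there is an attractor'' step fully rigorous in the degenerate cases: $(g^Y)^k$ is only a monotone circle map, constant on an interval, so fixed points can be non-isolated or semi-stable, and ``periodic segments'' can be points or nondegenerate intervals. The cleanest way around this is to fix a periodic orbit of $g^Y$, pass to $h=(g^Y)^k$ restricted to a periodic segment $[p,q]$ with $h(p)=p$, $h(q)=q$ and no periodic point of $g^Y$ in the interior, and argue by the intermediate value theorem on $h(x)-x$ that $h$ has an interior fixed point unless $h(x)>x$ throughout the interior (the ``source–source'' configuration with nothing between is impossible by monotonicity and the endpoint conditions once one checks the repelling/attracting sign of the endpoints). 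I would carry this out carefully on one segment and then quote it for all $n$ segments. The lifting step and the single exceptional interval $I^Y$ are then purely combinatorial and routine.
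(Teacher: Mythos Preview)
Your approach is essentially the paper's: use Lemma~\ref{l.transversehomeo}, note that at most one $g^Y$-periodic orbit meets $I^Y$, and between consecutive periodic orbits of one type find a periodic orbit of the opposite type by looking at the nearest fixed point of $(g^Y)^k$ on one side.

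There is, however, a genuine slip that breaks the argument as written. Recall from Subsection~\ref{ss.perturbation} that $g^Y$ is the return map of the \emph{inverse} flow $\phi^{-Y}$. Hence a periodic \emph{source} of $Y$ is a periodic \emph{sink} (attractor) of $g^Y$, and a periodic \emph{sink} of $Y$ is a periodic \emph{source} (repeller) of $g^Y$ --- the opposite of what you wrote. This reversal is exactly what produces the asymmetry $2n$ versus $2n-1$: the unique $g^Y$-periodic orbit that may fail to lift is the one through $I^Y$, and since $I^Y$ lies in its $g^Y$-basin, that orbit is always a $g^Y$-attractor (equivalently, never a $Y$-sink). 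With your direction, your ``intermediate'' orbits in the source case are $g^Y$-attractors, one of which \emph{can} be the orbit through $I^Y$; your clause ``but a source is never killed'' does not rescue the count, and you would only get $2n-1$. With the correct direction, the $n$ sources of $Y$ are $g^Y$-sinks, the $n$ intermediate orbits are $g^Y$-repellers (at least on one side), none of them can be the orbit through $I^Y$, and all $n$ lift --- giving $2n$. The sink case then gives $2n-1$ because the intermediates are $g^Y$-attractors and one may be lost through $I^Y$.

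Once you fix the orientation, your ``between two repellers lies an attractor'' argument is fine and in fact slightly cleaner than the paper's treatment of the source case: the paper splits into two subcases (according to whether $I^Y$ meets a periodic orbit) and, when it does, introduces an extra candidate $q_0$ coming from the periodic point $p_0\in I^Y$; your observation that the lost orbit is necessarily a $g^Y$-attractor makes this case analysis unnecessary. The ``nearest periodic point on one side'' device the paper uses is exactly your intermediate-value reasoning in disguise, so the degeneracy worries you flagged are handled the same way in both proofs.
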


\begin{proof}
Observe that $g^Y(I^Y)$ is a point, $I^Y$ intersects at most one periodic orbit, and in this case, 
$I^Y$ belongs to the stable set of this periodic orbit. Hence, the periodic orbit which intersects 
with $I^Y$ is not a source for $g^Y$. 

When $Y$ has $n$ periodic sinks: $\Orb_Y(p_1),\dots, \Orb_Y(p_n)$, suppose
$p_1,\dots, p_n\in \cS_0$. Then for every $1\leq i \leq n$, $p_i$ is an isolated source for $g^Y$. 
For each $1\leq i \leq n$, take $q_i$ the nearest periodic point of $g^Y$ to $p_i$ in the counterclockwise 
direction, then $q_i$ has a half neighborhood which is topologically contracting by $g^Y$. There are at least 
$\#\Per(g^Y)-1$ distinct such periodic orbits of $g^Y$ which do not 
intersect $I^Y$, these periodic orbits correspond to intersections of periodic orbits of $Y$ with $\cS_0$. 
Therefore, there are at least $n-1$ $g^Y$-periodic orbits of $\{\Orb_{g^{Y}}(q_1),\dots, \Orb_{g^Y}(q_n)\}$ 
can be lifted to periodic orbits of $Y$ which are not periodic sinks. 
As a conclusion, $\phi^Y$ has at least $2n-1$ periodic orbits.

When $Y$ has $n$ periodic sources: $\Orb_Y(p_1),\dots, \Orb_Y(p_n)$, suppose
$p_1,\dots, p_n\in \cS_0$. Then for every $1\leq i \leq n$, $p_i$ is an isolated sink for $g^Y$. 
We first consider the case that $I^Y$ does not intersect a periodic orbit of $g^Y$. Taking $q_i$ the nearest 
periodic point of $g^Y$ to $p_i$ in the counterclockwise direction, then $q_i$ has at least a half 
neighborhood which is expanding by $g^Y$. As observed above, $\{\Orb_{g^{Y}}(q_1),\dots, \Orb_{g^Y}(q_n)\}$ 
can be lifted to periodic orbits of $Y$ which are not periodic sources. Hence, $Y$ has at least $2n$ periodic orbits.

In the second case, we suppose $p_0\in I^Y$ is a periodic point of $g^Y$. We may further suppose that 
$I\setminus p_0$ contains a segment in the counterclockwise direction, the proof of other case is similar. 
Taking $q_i$ $i=0,1,\dots,n$ the nearest periodic point of $g^Y$ in the counterclockwise direction, Then each
$q_i$ has at least a half neighborhood is expanding by $(g^Y)^k$. There are at least $n$ $g^Y$-periodic orbits of 
$\{q_0,\dots, q_n\}$ can be lift to periodic orbits of $Y$, which are not periodic sources. This implies that
$Y$ has at least $2n$ periodic orbits.

The proof of this corollary is finished.
\end{proof}

\subsection{Proof of Theorems~\ref{main.C} and~\ref{main.D}}
\begin{proof}[Proof of Theorem~\ref{main.C}:]

Fix $\cU_0$ the neighborhood of $X$ given in subsection~\ref{ss.perturbation}.
By Lemma~\ref{l.cherrystarflow}, $\phi_t$ is a star flow. There is a $\C^1$ neighborhood $\cU\subset \cU_0$ 
of $X$ such that for any flow $Y\in \cU$, $Y$ belongs to the following three cases:
\begin{itemize}
\item[(a)] has a saddle connection; 
\item[(b)] has no periodic orbits, 
\item[(c)] has periodic orbits, and all the periodic orbits are periodic sinks. 

\end{itemize}
From now on, we assume that $Y$ has no saddle connection.

Recall the general definition of Cherry flow in~\cite{MSMM}:
\begin{definition}
A flow on $\bT^2$ is called a {\it Cherry flow} if it has:
\begin{itemize}
\item[(i)] a finite number of singularities all of which are hyperbolic;
\item[(ii)] no periodic orbits;
\item[(iii)] no saddle-connections;
\item[(iv)] sinks. 
\end{itemize}
\end{definition}

When $\phi^{Y}$ contains no periodic orbit, by the above definition of Cherry flow, 
$Y$ is automatically a $\C^1$ Cherry flow. Denote by $\Lambda^Y$ the quasi-minimal attractor and 
$\sigma^Y$ the singularity contained in $\Lambda^Y$. By Proposition~\ref{p.numberofmeasures}, 
$\Lambda^Y$ admits at most two ergodic measures. 
We claim that $\Lambda^Y$ admits a unique ergodic measure $\delta_{\sigma^Y}$. 
\begin{proof}
Suppose that $\Lambda^Y$ admits another non-trivial ergodic measure $\mu^Y$ of $\phi^Y$.
Because $\Lambda^X$ only supports a unique invariant measure $\delta_{\sigma_0}$ of $\phi^X$ 
with negative divergence, by the upper-semi continuation of the invariant measures spaces, 
we have that $\int div_Y(x)d \mu^Y(x)<0$. This contradicts to Corollary~\ref{c.nonhyperbolicity}.

\end{proof}

The above claim implies that $\delta_{\sigma^Y}$ is a physical measure, whose basin coincides to 
$\bT^2\setminus \tilde{\sigma}^Y$ where $\tilde{\sigma}^Y$ denotes the unique source of $Y$.

Now suppose $\phi^Y$ contains a periodic sink $p$. Let us show that $\Orb_Y(p)$ is the unique periodic orbit of 
$\phi^Y$. Suppose there is a different periodic sink $\Orb_Y(q)$.
By Corollary~\ref{c.periodicpoint}, $\phi^Y$ has a periodic orbit which is not a periodic sink. This contradicts to the 
fact that every periodic orbit of $\phi^Y$ is a periodic sink. Hence, $\phi^Y$ contains only one periodic sink.

It remains to show that $\phi^Y$ is a Morse-Smale flow. We are going to prove that every $x\in \bT^2\setminus (\tilde{\sigma}^Y\cup\sigma^Y\cup W^s(\sigma^Y))$ belongs to the attracting basin of the periodic sink $\Orb_Y(p)$.
Suppose this is false, then there is $x\in \bT^2\setminus (\tilde{\sigma}^Y\cup\sigma^Y\cup W^s(\sigma^Y))$ such that 
$\omega(x)$ is not a critical element. Take a smaller neighborhood $V$ of $\omega(x)$ such that 
$V\cap \Orb_Y(p)=\emptyset$. Then by Closing Lemma, there is another flow $Z\in \cU$ such that $Z|_{V^c}=Y$ and
$\phi^Z$ has a periodic orbit $\Orb_Z(q)\subset V$. Then $\phi^Z$ has two periodic orbits, a contradiction
to the above discussion that $\phi^Z$ has at most one periodic orbit. The proof is complete.

\end{proof}

\begin{proof}[Proof of Theorem~\ref{main.D}:]

We first prove that $\phi_t$ can be approximated by flows with arbitrarily large number of periodic sinks. Suppose by 
contradiction that there exist $l>0$ and $\cU\subset \cU_0$ a neighborhood of $X$ such that every flow $Y\in \cU$ has
at most $l$ periodic sinks. We claim that $l\geq 1$. 
\begin{proof}
Suppose by contradiction that $l=0$.
By Closing lemma, there is a flow $Y$ arbitrarily close to $X$ such that $Y$ has at least one periodic orbit
$\Orb_Y(p)$ which is not a periodic sink. After a small perturbation at the singularity $\sigma^Y$, we may 
always assume that $\cD_Y(\sigma^Y)>0$. When $\cD_Y(\Orb_Y(p))=0$, which means this periodic orbit is non-hyperbolic,
we can apply the Franks Lemma (Theorem~\ref{t.Franks}) to show that, there is flow $Z$ arbitrarily close to $Y$ such that $\Orb_Z(p)$ 
is a periodic sink. This contradicts our previous assumption. 

From now on, we assume that all the periodic orbits of $Y$ are sources. 
By Corollary~\ref{c.periodicpoint}, $Y$ has at least another periodic orbit $\Orb_Y(q)$ which
is not a periodic source. This contradicts to the above assumption that all the periodic orbits
of $Y$ should be periodic sources.

\end{proof}

Take a flow $Y$ which is arbitrarily close to $X$ such that $\phi^Y$ contains $l$ periodic sinks 
$\Orb_Y(p_1),\dots,\Orb_Y(p_l)$. By Corollary~\ref{c.periodicpoint}, $Y$ contains at least $l-1$ 
periodic orbits which are not periodic sinks. Replacing by arbitrary small perturbation, we suppose 
the $l-1$ periodic orbits $\Orb_Y(q_1),\dots, \Orb_Y(q_{l-1})$ are all periodic sources. Because 
all the invariant measures of $X$ have non-negative divergence, by the upper semi-continuation 
of the invariant measures space, there is $\vep$ close to zero such that for any $1\leq i\leq l$, 
$\cD_Y(\Orb_Y(p_i))\geq -\vep$. Applying Franks Lemma as observed in Remark~\ref{r.Franks}, 
there is a flow $Z$ which is close to $Y$ such that 
\begin{itemize}
\item $\{\Orb_Z(p_i);i=1,\dots, l\}$ are all periodic sources with divergence smaller than $\vep$,
\item $Z=Y$ outside a small neighborhood of $\cup_{i=1,\dots, l} \Orb_Y(p_i)$. 
\end{itemize}
In particular, $Z$ has $2l-1$ periodic sources. By Corollary~\ref{c.periodicpoint}, $Z$ has at least
another $2l-1$ periodic orbits $\Orb_Z(a_{1}),\dots, \Orb_Z(a_{2l-1})$ which are not periodic sources. 
Because $\{\Orb_Z(p_i)\}_{i=1}^l$ all have small divergence, using again the Franks Lemma, we obtain a flow $\hat{Z}$ such that 
$$\Orb_{\hat{Z}}(p_1),\dots, \Orb_{\hat{Z}}(p_l)),\Orb_{\hat{Z}}(a_1),\dots, \Orb_{\hat{Z}}(a_{2l-1})$$
are all periodic sinks. Hence $\hat{Z}$ has at least $3l-1>l$ periodic sinks, a contradiction to the assumption
that $\hat{Z}$ has at most $l$ periodic sinks. This contradiction shows that $X$ can be approached by
a flow $Y$ with arbitrarily large number of periodic sinks.

For any $l>0$, take $Y$ sufficiently close to $X$ such that $Y$ has $l$ periodic sinks $\{\Orb_Y(p_1),\dots, \Orb_Y(p_l)\}$.
As explained above, all these periodic sinks have negative divergence close to zero. Applying Franks Lemma on $\Orb_Y(p_1)$, 
there is a flow $Z$ arbitrarily close to $Y$ such that 
\begin{itemize}
\item $\Orb_Z(p_1)$ is not-hyperbolic, 
\item $\Orb_Z(p_2),\dots, \Orb_Z(p_l)$ are still periodic sinks of $Z$. 
\end{itemize}
The proof is complete.
\end{proof}

\appendix

\section{Proof of Lemma~\ref{l.div}}

\begin{proof}[Proof of Lemma~\ref{l.div}:]

For $s\in [0,t]$, denote by $x_s=\phi^Y_s(x)$. Because 

\begin{eqnarray*}
\frac{d}{ds}\big|_{s_0}(\ln \det \Phi_s|_{T_xM}) & = & \lim_{\Delta s \to 0}\frac{1}{\Delta s} (\ln \det \Phi_{s_0+\Delta s}|_{T_xM}-\ln \det \Phi_{s_0}|_{T_xM}) \\
& = & \lim_{\Delta s \to 0}\frac{1}{\Delta s} (\ln \det \Phi_{\Delta s}|_{T_{x_{s_0}}M}) \\
& = & \lim_{\Delta s \to 0}\frac{1}{\Delta s} \ln \det 
\left| \begin{array}{cc}
\frac{\partial f_{\Delta s}}{\partial x}|_{x_{s_0}} & \frac{\partial g_{\Delta s}}{\partial x}|_{x_{s_0}} \\
\frac{\partial f_{\Delta s}}{\partial y}|_{x_{s_0}} &  \frac{\partial g_{\Delta s}}{\partial y}|_{x_{s_0}}\\
\end{array} \right| \\
& = & \frac{d}{d \Delta s}|_{\Delta s=0} \ln \big(\frac{\partial f_{\Delta s}}{\partial x}\frac{\partial g_{\Delta s}}{\partial y}-\frac{\partial g_{\Delta s}}{\partial x}\frac{\partial f_{\Delta s}}{\partial y}\big)\big|_{x_{s_0}}\\
& = &\frac{ \frac{d}{d \Delta s}|_{\Delta s=0} \big(\frac{\partial f_{\Delta s}(x_{s_0})}{\partial x}\frac{\partial g_{\Delta s}(x_{s_0})}{\partial y}-\frac{\partial g_{\Delta s}(x_{s_0})}{\partial x}\frac{\partial f_{\Delta s}(x_{s_0})}{\partial y}\big)}
{\det \Phi_{0}|_{T_{x_{s_0}}M}} \\
\end{eqnarray*}

Note that
$$\frac{d}{d \Delta s}|_{\Delta s =0}(\frac{\partial f_{\Delta s}(x_{s_0})}{\partial x})=
\frac{\partial}{\partial x} (\frac{d}{d \Delta x}|_{\Delta s =0} (f_{\Delta s}(x_{s_0})))=\frac{\partial}{\partial x} (Y_1(x_{s_0})).$$
Similarity, we have that 
$$\frac{d}{d \Delta s}|_{\Delta s =0}(\frac{\partial g_{\Delta s}(x_{s_0})}{\partial y})=\frac{\partial}{\partial x} (Y_2(x_{s_0})),$$
$$\frac{d}{d \Delta s}|_{\Delta s =0}(\frac{\partial g_{\Delta s}(x_{s_0})}{\partial x})=\frac{\partial}{\partial y} (Y_1(x_{s_0})),$$
$$\frac{d}{d \Delta s}|_{\Delta s =0}(\frac{\partial f_{\Delta s}(x_{s_0})}{\partial y})=\frac{\partial}{\partial y} (Y_2(x_{s_0})).$$

Because $\Phi_{0}|_{T_{x_{s_0}}M}=\ID$, we have
\begin{equation*}       
\left(                 
  \begin{array}{cc}  
  \frac{\partial f_{\Delta s}(x_{s_0})}{\partial x} & \frac{\partial g_{\Delta s}(x_{s_0})}{\partial x} \\
  \frac{\partial f_{\Delta s}(x_{s_0})}{\partial y} &  \frac{\partial g_{\Delta s}(x_{s_0})}{\partial y}\\
  \end{array}
\right)_{\Delta s =0}= \ID.
\end{equation*}

Combining the above argument, we show that 
$$\frac{d}{ds}\big|_{s_0}(\ln \det \Phi_s|_{T_xM}) = \frac{\partial{Y_1}}{\partial{x}}+\frac{\partial{Y_2}}{\partial{y}}=div_Y(x_{s_0})$$

Hence, 
$$\ln \det \Phi_t|_{T_xM}= \int_0^t \frac{d}{ds} (\ln \det \Phi_s|_{T_xM}) ds=\int_{0}^t div_Y(x_s) ds.$$
The proof is complete.

\end{proof}

\end{document}